\documentclass[12pt]{article}
\usepackage[utf8]{inputenc}
\usepackage[english]{babel}

\usepackage{amsthm}
\usepackage{amsmath, amssymb, amsfonts}
\usepackage{mathrsfs}
\usepackage{titling}
\usepackage[colorlinks=true, linkcolor=blue, citecolor=green, urlcolor=cyan]{hyperref}
\usepackage{url}
\usepackage{bbm}
\usepackage{xcolor}
\usepackage{todonotes}
\usepackage{enumitem}
\usepackage{soul}
\usepackage{mathtools}

\usepackage[margin=1.2in]{geometry}

\newcommand{\A}{\mathcal{A}}
\newcommand{\B}{\mathsf{B}}
\newcommand{\C}{\mathcal{C}}
\newcommand{\CC}{\mathbf{C}}
\newcommand{\D}{\mathcal{D}}

\newcommand{\E}{\mathbb{E}}
\newcommand{\EE}{\mathbf{E}}
\newcommand{\F}{\mathcal{F}}
\newcommand{\I}{\mathcal{I}}
\newcommand{\II}{\mathbf{I}}
\newcommand{\N}{\mathbb{N}}
\renewcommand{\P}{\mathbb{P}}
\newcommand{\PP}{\mathbf{P}}
\newcommand{\Prm}{\mathrm{P}}

\newcommand{\Psf}{\mathsf{P}}
\newcommand{\Q}{\mathbf{Q}}
\newcommand{\R}{\mathbb{R}}
\newcommand{\T}{\mathbb{T}}
\newcommand{\Z}{\mathbb{Z}}

\newcommand{\1}{\mathbbm{1}}

\newcommand{\ajar}{\mathrm{ajar}}
\newcommand{\Ber}{\mathrm{Ber}}
\newcommand{\dist}{\mathrm{dist}}
\newcommand{\core}{\mathrm{core}}
\newcommand{\env}{\mathrm{env}}

\newcommand{\mix}{\mathrm{mix}}
\newcommand{\opt}{\!\mathrm{opt}}
\newcommand{\site}{\mathrm{site}}
\newcommand{\sta}{\mathrm{sta}}
\newcommand{\TV}{\mathrm{TV}}
\newcommand{\upd}{\mathrm{upd}}

\newtheorem{theorem}{Theorem}[section]
\newtheorem{lemma}[theorem]{Lemma}

\newtheorem{proposition}[theorem]{Proposition}
\theoremstyle{definition}
\newtheorem{definition}[theorem]{Definition}
\newtheorem{remark}[theorem]{Remark}
\newtheorem{claim}[theorem]{Claim}

\numberwithin{equation}{section}

\title{Mixing times of spin systems on dynamical percolation}
\author{
    Alexandre Stauffer\thanks{Department of Mathematics, King's College London. Email: \texttt{a.stauffer@kcl.ac.uk}.}
    \and 
    Oskar Vavtar\thanks{Department of Mathematics, King's College London. Email: \texttt{oskar.vavtar@kcl.ac.uk}.}
}
\date{\today}

\begin{document}
\maketitle


\begin{abstract}
    We study the mixing times of stochastic spin systems corresponding to nearest-neighbour Glauber dynamics on dynamical percolation, defined on $d$-dimensional torus of side-length $N$. In this model, the status of each edge (open or closed) updates independently at rate $\lambda>0$, according to $\Ber(p)$ samples. Simultaneously, the spin of each site updates at rate $1$ according to Glauber dynamics on the environment restricted to open edges. We show that for a relatively general class of nearest-neighbour systems, as long as $p<p_c(d)$, for any temperature, if $\lambda$ is sufficiently small, the mixing time is of order $\frac{\log N}{\lambda}$. This Markov chain is non-reversible, and the proof is obtained by developing a particular coupling that couples together local configurations whenever the environment behaves well.
\end{abstract}


\section{Introduction}\label{sec:intro}

Let $G=(V,E)$ be a finite graph and $S$ a finite set. We study the dynamics of a class of $S^V$-valued spin systems in a random dynamic environment. The latter will be given by dynamical percolation, which is a $\{0,1\}^E$-valued Markov chain, which represents opening and closing of edges in $E$. 

Fixing $p\in(0,1)$ and $\lambda>0$, we define the \textit{dynamical percolation with speed $\lambda$ and density $p$}, which we denote by $(\eta_t)_{t\geq 0}$, as follows:
\begin{itemize}
    \item[(i)] we start from an arbitrary (possibly random) initial configuration $\eta_0\in\{0,1\}^E$;
    \item[(ii)] each edge carries an independent rate-$\lambda$ Poisson clock;
    \item[(iii)] if the clock corresponding to edge $e$ rings at time $t$, we resample the value of $\eta_t(e)$ according to $\Ber(p)$, independently of anything else.  
\end{itemize}
By convention, we say that the edge $e$ is \textit{open} (resp.~\textit{closed}) at time $t$ if $\eta_t(e)=1$ (resp.~$\eta_t(e)=0$). It is easy to check that $(\eta_t)_{t\geq 0}$ is invariant with respect to Bernoulli (bond) percolation on $G$, i.e., the measure $\P_p=\P_p^G$, given by
$$\P_p(\eta) ~=~ \prod_{e\in E}p^{\eta(e)}(1-p)^{1-\eta(e)}, \quad \eta\in\{0,1\}^E.$$ 

Consider now a collection of potentials $\Phi=\{\Phi_{e}:e\in E\}$, where for each $e\in E$, the map $\Phi_e:S\times S\to\R$ is symmetric. That is, for $e=xy$ and $\sigma\in S^V$, we have
$$\Phi_{xy}(\sigma(x),\sigma(y)) ~=~ \Phi_{xy}(\sigma(y),\sigma(x)).$$
We define a spin system given by potentials $\Phi$ on dynamical percolation as a stochastic process $(\sigma_t,\eta_t)_{t\geq 0}$, which evolves as follows:
\begin{itemize}
    \item[(i)] $(\eta_t)_{t\geq 0}$ is taken to be dynamical percolation on $\{0,1\}^E$ with speed $\lambda$ and density $p$.
    \item[(ii)] $(\sigma_t)_{t\geq 0}$ is the $S^V$-valued spin component, started from arbitrary $\sigma_0\in S^V$, which evolves in the following way:
    \begin{itemize}
        \item[(a)] each site carries an independent rate-$1$ Poisson clock;
        \item[(b)] if the clock corresponding to site $x$ rings at time $t$, the value of $\sigma_t(x)$ resamples according to 
        \begin{equation}\label{eq:update_conditional_probability}
        \mu_{\sigma_t,\eta_t}^x(\cdot) ~:=~ \frac{\exp\!\big(\!-\sum_{y:xy\in E}\eta_t(xy)\Phi_{xy}(\cdot,\sigma_t(y))\big)}{\sum_{s\in S}\exp\!\big(\!-\sum_{y:xy\in E}\eta_t(xy)\Phi_{xy}(s,\sigma_t(y))\big)}.
        \end{equation}
    \end{itemize}
\end{itemize}
We will refer to this process as \textit{$\Phi$-spin system on dynamical percolation}. The process $(\sigma_t,\eta_t)_{t\geq 0}$ is also a Markov chain; assuming that $\Phi$ is such that the process is irreducible, we will write $\pi$ for the corresponding invariant measure. Note, however, that $(\sigma_t)_{t\geq 0}$ by itself is not a Markov chain, as the transition rates depend on the environment. Moreover, the chain $(\sigma_t,\eta_t)_{t\geq 0}$ is not reversible, in the sense that there is no measure on $S^V\times\{0,1\}^E$ with respect to which the chain is reversible. 

We note that if in the part (i) of the definition above, we did not consider dynamical percolation but simply took $\eta_t\equiv 1$ for all $t\geq 0$, we would recover the Glauber dynamics for the Gibbs measure given by the Hamiltonian $H=\sum_{xy\in E}\Phi_{xy}$. In this case we could without loss of generality simply consider only the first coordinate $(\sigma_t)_{t\geq 0}$ which would indeed be a Markov chain. Throughout the paper, we will refer to this process as the \textit{$\Phi$-Glauber dynamics} on $G$. 

A classical example of $\Phi$ is the one corresponding to the Ising model, where $S=\{-1,+1\}$ and for $xy\in E$ and $\sigma\in S^V$ we have
$$\Phi_{xy}(\sigma(x),\sigma(y)) ~=\ -\beta\sigma(x)\sigma(y),$$
for some fixed $\beta\in(0,\infty)$. In this case, if the clock corresponding to site $x$ rings at time $t$, we resample the value of $\sigma_t(x)$ to $\pm 1$ with probability proportional to
$$\exp\!\Big(\!\pm\beta\sum_{y:xy\in E}\eta_t(xy)\sigma_t(y)\Big).$$

We are particularly interested in the process $(\sigma_t,\eta_t)_{t\geq 0}$ defined on a sequence of graphs $(G_N)_{N\geq 1}$, where $G_N$ is the $d$-dimensional torus of size $N$ (for some fixed $d\geq 1$), i.e., the vertex set is given by $\T_N^d:=(\Z/N\Z)^d$ and the edge set $E(\T_N^d)$ is taken to be the set of nearest-neighbour edges. Note that in this case, a single collection of potentials $\Phi$ will be replaced by a sequence $(\Phi^{(N)})_{N\geq 1}$, where $\Phi^{(N)}=\{\Phi_{e}^{(N)}:e\in E(\T_N^d)\}$; we will assume no level of consistency between them \textit{a priori}. Moreover, we will assume that $p<p_c:=p_c(\Z^d)$, where $p_c$ is the critical probability for bond percolation in $\Z^d$. Thus, the environment typically consists of many small clusters. We will also consider the case when  $\lambda$ is small, so the environment evolves slowly. 

\paragraph{Mixing time and main result.}
Our main result concerns the mixing time of $(\sigma_t,\eta_t)_{t\geq 0}$ under the above assumptions on $p$ and $\lambda$, as well as some basic assumptions about the potentials. Given an irreducible aperiodic Markov chain $(X_t)_{t\geq 0}$ with stationary distribution $\pi$, we define the mixing time $t_\mathrm{mix}(\kappa)$ corresponding to $\kappa\in(0,1/2)$ as
$$t_\mathrm{mix}(\kappa) ~:=~ \inf\big\{t\geq 0: \max_x\|\Prm_x(X_t\in\cdot)-\pi\|_\TV\leq\kappa\big\},$$
where $\|\!\cdot\!\|_\TV$ denotes the total variation norm on measures, and $\Prm_x$ the law of $(X_t)_{t\geq 0}$ given $X_0=x$. For the case of classical Ising Glauber dynamics on a torus $\T_N^d$, $d\geq 2$, there exists $\beta_c=\beta_c(d)$, so that for $N$ large,
\begin{equation}\label{eq:ising_phase_transition}
t_\mix(\kappa) ~=~ \begin{cases}
\Theta(\log N), ~&\forall \beta<\beta_c,\\
\exp(\Theta(N^{d-1})), ~&\forall\beta>\beta_c;
\end{cases}
\end{equation}
see for example Chapter 15 in \cite{Levin_Peres_2017}. It is natural to ask whether the same phase transition would occur if we consider the Ising model on subcritical dynamical percolation as we defined above. 

Another important phenomenon related to mixing of Markov chains is that of cutoff. We say that a sequence of Markov chains with mixing times $t_\mix^{(1)},t_\mix^{(2)},\ldots$ exhibits a \textit{cutoff} if for all $\kappa\in(0,1)$, 
$$\lim_{n\to\infty}\frac{t_\mix^{(n)}(\kappa)}{t_\mix^{(n)}(1-\kappa)} ~=~ 1.$$
Most commonly, the sequence is taken to be a system (for example Ising Glauber dynamics) on an increasing sequence of graphs $G_1,G_2,\ldots$, which could for example be tori of increasing size. Moreover, we say that the cutoff has a window of size $O(w_n)$, if $w_n=o(t_\mix^{(n)})$ and 
\begin{align*}
    \lim_{\gamma\to\infty}\liminf_{n\to\infty}\max_x\|\Prm_x(X_{t_\mix^{(n)}-\gamma w_n}\in\cdot)-\pi\|_\TV ~&=~ 1, \\
    \lim_{\gamma\to\infty}\limsup_{n\to\infty}\max_x\|\Prm_x(X_{t_\mix^{(n)}+\gamma w_n}\in\cdot)-\pi\|_\TV ~&=~ 0.
\end{align*}
Despite the cutoff phenomenon being conjectured for a large class of models, it remains unproved for many classical cases. A major breakthrough in the topic was a series of papers by Lubetzky and Sly, who established in \cite{Lubetzky_Sly_2012} the cutoff for Ising Glauber dynamics for any $\beta<\beta_c(d)$, where $\beta_c(d)$ is the critical value of Ising model on $\Z^d$, at a window $O(\log\log n)$, which was later improved in \cite{Lubetzky_Sly_2015} to $O(1)$. 

Before stating our main result, we introduce some further notation. We will write $t_\mix^{(N)}(\kappa)$ for the $\kappa$-mixing time of the $\Phi^{(N)}$-spin system on dynamical percolation on $\T_N^d$, for some fixed $d\geq 1$. Moreover, given a connected subgraph $F$ of $\T_N^d$, we write $\tilde{t}_\mix^{(N)}(\kappa,F)$ for the $\kappa$-mixing time corresponding to $\Phi^{(N)}$-Glauber dynamics on $F$. Now we define, for $a>0$, 
$$h_\kappa^{(N)}(a) ~:=~ \sup_{F:|V(F)|\leq a} \tilde t_\mix^{(N)}(\kappa,F),$$
where the supremum is over connected subgraphs of $\T_N^d$; the supremum above is indeed finite as long as the chain is ergodic. Moreover, we write
\begin{equation}\label{eq:def_h}
h_\kappa(a) ~:=~ \sup_{N\geq 1} h_\kappa^{(N)}(a).
\end{equation}

\begin{theorem}\label{thm}
 Let $(\Phi^{(N)})_{N\geq 1}$ be such that $\Phi^{(N)}$ is a collection of potentials on $\T_N^d$ as above, and assume that for each $\kappa\in(0,1/2)$ and $a>0$,  we have $h_\kappa(a)<\infty$. Then, for any $d\geq 1$, $\kappa\in(0,1/2)$ and $p<p_c(d)$, there exist $C_\star>0$, $\lambda_0>0$ and $N_0\in\N$, so that for each $\lambda<\lambda_0$ and $N>N_0$,
    \begin{equation}\label{eq:theorem}
    \frac{1}{C_\star\lambda}\log N ~\leq~ t_\mix^{(N)}(\kappa) ~\leq~ \frac{C_\star}{\lambda}\log N.
    \end{equation}
    Moreover, if instead of $\lambda$ fixed we consider a sequence $\lambda_N\to 0$, the sequence $t_\mix^{(N)}(\kappa)$ exhibits a cutoff with a window $o(\log N)$.
\end{theorem}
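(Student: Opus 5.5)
We prove the lower and upper bounds separately and then read off the cutoff from how the constants depend on $\lambda$.

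\emph{Lower bound.} Consider the environment alone. The spin chain cannot be mixed while some edge has not yet been resampled. Since $(\eta_t)$ is a product chain of $|E(\T_N^d)|=dN^d$ independent two-state chains, each of which fully mixes at its first clock ring, its mixing time is $\frac{1}{\lambda}\log(dN^d)(1+o(1))$. Concretely, start from $\eta_0\equiv 0$ and take $t=\frac{(1-\epsilon)d}{\lambda}\log N$. The number of edges that have never rung and are still closed has mean roughly $N^{\epsilon d}$. Under $\pi$, whose $\eta$-marginal is $\P_p$, the number of closed edges has fluctuations of order $N^{d/2}$. A second-moment argument then does not directly distinguish the two. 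It is cleaner to use as a distinguishing statistic the number of edges that are closed together with all $2(2d-1)$ of their neighbouring edges; this count has a deterministic-looking excess over its stationary mean, which is $\Theta(N^d)$. This gives the required $\frac{c}{\lambda}\log N$ lower bound. The lower bound for the cutoff is the same argument with $\epsilon\to 0$.

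\emph{Upper bound via coupling.} Run two copies $(\sigma_t,\eta_t)$ and $(\sigma'_t,\eta'_t)$ using the same edge clocks and Bernoulli samples. After every edge has rung, which happens by time $\frac{(1+\epsilon)d}{\lambda}\log N$ with high probability, the environments agree. Then couple the spins. Fix a large constant $K$ and a time scale $T\asymp 1$, which is much shorter than $1/\lambda$. By subcriticality ($p<p_c$), with high probability the cluster of any site has size at most $K$ during a window of length $T$ on which the environment around it is frozen. A window around $x$ is frozen with probability about $1-O(\lambda K T)$. On a frozen window the spins inside the cluster $\C$ evolve as $\Phi$-Glauber dynamics on $\C$ with free boundary, since closed edges carry no interaction. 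Because $h_\kappa(K)<\infty$, we can take $T=h_\delta(K)$ and couple the two copies on $\C$ to agree with probability $1-\delta$ by time $T$, using the optimal (maximal) coupling of cluster dynamics. Once the spins agree on a cluster, we want them to keep agreeing. Under the identity coupling, the spins at $x$ stay coupled until an edge adjacent to the cluster of $x$ opens and connects it to a disagreeing site. We therefore organise the argument as a site-level "disagreement" process. A disagreement at $x$ can only be created when an edge near $x$ flips, which happens at rate $O(\lambda)$ per site, times cluster-size factors that are exponentially controlled. A disagreement is removed with probability bounded below within time $T$ whenever the environment is calm, which happens with probability close to $1$ when $\lambda$ is small.

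\emph{Main obstacle and cutoff.} The hard step is showing that the disagreement set dies out within time $\frac{C}{\lambda}\log N$ rather than being sustained. The source of difficulty is non-reversibility and the infinite-range dependencies created by moving clusters. I would handle this by comparing the disagreement process to a subcritical branching/contact-type process on $\T_N^d$ evolving on time scale $1/\lambda$. In each epoch of length of order $1/\lambda$, each disagreeing site is healed with probability at least $1-\delta'$. It spreads only through edge-flip events that merge its cluster with others, and the expected number of offspring is at most a constant strictly less than $1$ once $\lambda<\lambda_0$ and $K$ is large; the exponential tails of subcritical cluster sizes bound this spreading. Then the expected number of disagreements decays like $N^d e^{-c\lambda t}$, which gives the upper bound $\frac{C}{\lambda}\log N$. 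For cutoff when $\lambda_N\to 0$, the spin-coupling time per site after the environment matches is $O(1)$, which is $o(\frac{1}{\lambda_N}\log N)$. So the upper bound is dominated by the environment's mixing time $\frac{d}{\lambda_N}\log N(1+o(1))$, which matches the lower bound. This requires the sharper statement that the residual disagreement dies out within time $o(\frac{\log N}{\lambda_N})$ after every edge has rung, which follows from the same branching comparison with the rate now $\lambda_N\to0$.
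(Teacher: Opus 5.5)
Your proposal has two genuine gaps. \emph{First, you confuse the environment's mixing time with the coupon-collector time.} Dynamical percolation is a product of $dN^d$ independent two-state chains, each at total variation distance $\asymp e^{-\lambda t}$ from $\Ber(p)$. It therefore has cutoff at $\frac{1}{2\lambda}\log(dN^d)=\frac{d}{2\lambda}\log N$, not at $\frac{d}{\lambda}\log N$. Indeed, the number of open edges has mean shift $\asymp N^de^{-\lambda t}$, which beats the $N^{d/2}$ fluctuations only for $t<\frac{d}{2\lambda}\log N$. Your ``closed star'' statistic does no better. At $t=\frac{(1-\epsilon)d}{\lambda}\log N$ its excess is $\Theta(N^de^{-\lambda t})=\Theta(N^{\epsilon d})$, not $\Theta(N^d)$, and for $\epsilon<1/2$ the environment is in fact already mixed at that time.

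For the order-of-magnitude lower bound in \eqref{eq:theorem} this is easy to repair: project onto $\eta$ as in Lemma~\ref{lemma:lower_bound} and count open edges at time $\frac{d}{4\lambda}\log N$. It does, however, break your cutoff argument. Your upper bound waits until every edge has rung, which costs $\frac{d}{\lambda}\log N$, twice the true lower bound. To get cutoff you must couple the environments optimally at time $t_\mix^\env(\kappa/2)$. The spins at that moment are then correlated with the common environment in an uncontrolled way. This is exactly why the paper proves Lemma~\ref{lemma:upper_bound}, which reduces the problem to a coupling started from $\eta\sim\P_p$ with arbitrary environment-dependent spins $f(\eta),g(\eta)$, uniformly over $f,g$.

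\emph{Second, the core of the upper bound, that disagreement dies out, is only asserted by analogy with a subcritical branching process, and the analogy fails as stated.} The ``offspring'' of different disagreements are not independent: the environment is shared and decorrelates only on time scale $1/\lambda$. Conditionally on the past, disagreements survive preferentially on large clusters. A positive density of sites lie in clusters of size $>K$, and somewhere in the torus there are clusters of size $\asymp\log N$. On such clusters healing within time $h_\delta(K)$ is not guaranteed, and they persist for time $\asymp 1/\lambda$. So there is no uniform bound on the conditional expected spread of a disagreement given the past, and ``the expected number of disagreements decays like $N^de^{-c\lambda t}$'' does not follow.

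The paper makes this rigorous as follows.
\begin{enumerate}
\item It works with epochs of length $\varepsilon/\lambda$, where $\varepsilon=\log^{-5}L$.
\item It controls ajar clusters by stochastic domination with subcritical percolation.
\item For the weighted disagreement $\D_\Delta$ (weights $\alpha^{\dist(x,\Delta)}$), stopped when an ajar cluster in the box exceeds $\log^2L$, it proves the contraction $\EE[Y_{k+1}\mid\F_k]\le\gamma_LY_k+(\text{boundary term})$ with $\gamma_L\to0$ (Claim~\ref{almost_martingale_bound}).
\item This makes ``good'' a local event of probability at least $1-e^{-c\log^2L}$ for each space-time box.
\item The conclusion then follows from Liggett--Schonmann--Stacey domination and a path-counting bound.
\end{enumerate}

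Note also that for cutoff you need the spin-coupling stage to take $o(\lambda_N^{-1}\log N)$ time. An offspring mean merely bounded below $1$ per epoch of length $\asymp 1/\lambda$ gives only $\frac{C}{\lambda}\log N$ with $C$ fixed. The paper gets $C^\star\to0$ precisely because $\gamma_L\to0$ while each box contains $C/\varepsilon\to\infty$ epochs.
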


\begin{remark}
    The reader should note that, in the theorem above, the threshold $\lambda_0$ does indeed depend on $\Phi$. For example, in the case of Ising model, it may depend on the inverse temperature $\beta$; in this particular case, the theorem above tells us that for any $\beta\geq 0$, we can find $\lambda_0$ sufficiently small so that the mixing time has order $\frac{\log N}{\lambda}$ whenever $\lambda<\lambda_0$.
\end{remark}

It is also natural to consider the version of the process started from stationary environment, that is, sampling $\eta_0$ according to $\P_p$. Writing $t_\mix^{\sta,(N)}(\kappa)$ for the mixing time of the corresponding Markov chain, we obtain the following result.

\begin{theorem}\label{thm_stationary}
    Let $(\Phi^{(N)})_{N\geq1}$ be as in Theorem \ref{thm}. Then, for any $d\geq 1$, $\kappa\in(0,1/2)$, $p<p_c(d)$ and $C^\star>0$, there exist $\lambda_0>0$, $N_0\in\N$, so that for each $\lambda<\lambda_0$ and $N>N_0$,
    $$t_\mix^{\sta,(N)}(\kappa)~\leq~\frac{C^\star}{\lambda}\log N.$$
\end{theorem}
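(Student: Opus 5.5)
The plan is to build a coupling of two copies $(\sigma_t,\eta_t)$ and $(\sigma'_t,\eta'_t)$. Copy one starts from an arbitrary $\sigma_0$ with $\eta_0\sim\P_p$. Copy two starts from stationarity, $(\sigma'_0,\eta'_0)\sim\pi$, whose environment marginal is also $\P_p$. The two environments are coupled identically from time $0$: same initial configuration and same edge clocks and Bernoulli samples. This is the key difference from Theorem \ref{thm}. There, the $\frac{\log N}{\lambda}$ lower bound (and the upper constant) comes from needing every edge to ring at least once before the environments agree, which costs about $\frac{\log|E|}{\lambda}$. Here that cost is gone, so the constant $C^\star$ can be arbitrary. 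The whole task is then to show that the spin components coalesce at every site by time $\frac{C^\star}{\lambda}\log N$, with probability at least $1-\kappa$, once $\lambda$ is small enough.

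The spins are coupled site by site using shared Poisson clocks and the optimal (maximal) coupling of the update laws $\mu^x_{\sigma_t,\eta_t}$ and $\mu^x_{\sigma'_t,\eta_t}$. Discrepancies can then only propagate along open edges of the common environment. Now fix a site $x$ and a time window of length $T=\frac{c}{\lambda}$, with $c$ small. Since $p<p_c$, the open cluster of $x$ in the space-time sense has exponential tails: the union of the edges that are open at some time during the window is Bernoulli percolation with parameter $p'=1-(1-p)e^{-\lambda T}\cdot(\text{correction})\approx p+O(c)$. For $c$ small we still have $p'<p_c$. So with high probability the cluster $\C_x$ of $x$ in this union has size at most $a$, where the tail of $|\C_x|$ beyond $a$ is $e^{-\Theta(a)}$.

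On that event, during the window the spins in $\C_x$ evolve as $\Phi$-Glauber dynamics on a fluctuating subgraph of $\C_x$, isolated from outside. To exploit $h_\kappa(a)<\infty$, I would subdivide the window into pieces of length $2h_{\epsilon}(a)$. With probability bounded below, uniformly in $\lambda$ small, no edge of $\C_x$ rings during such a piece; this holds because $a\lambda h_\epsilon(a)\to0$. On such a piece, each connected component of the frozen environment inside $\C_x$ runs a fixed Glauber dynamics on a graph with at most $a$ vertices. By the definition of $h$ and standard submultiplicativity, the optimal Markovian coupling (taken componentwise, the two copies sharing the frozen environment) makes the two spin configurations agree on the component, and hence on all of $\C_x$, with probability at least $1-2\epsilon$, say. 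After agreement inside an isolated cluster, the shared-randomness coupling keeps them equal until a new edge opens to an unagreed region.

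Iterating, each site $x$ becomes and stays agreed with a failure probability per window that I would aim to bound by $1-\delta$, with $\delta>0$ depending on $p,a,\Phi$ but not on $\lambda$ or $N$. One must also handle discrepancies reinvading from outside. For this I would use a disagreement-percolation argument: a disagreement at $x$ at time $t$ requires a space-time path of open edges and clock rings back to a disagreement, or to a failed local coalescence. Subcriticality with $\lambda$ small should make the density of disagreements decay geometrically across windows, giving $\P(\sigma_t(x)\neq\sigma'_t(x))\le e^{-\delta' \lambda t}$. A union bound over $N^d$ sites then gives coalescence once $t\ge \frac{d\log N+\log(1/\kappa)}{\delta'\lambda}$.

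The main obstacle is that this naïve rate $\delta'$ yields a constant $d/\delta'$, not an arbitrary $C^\star$. To get any $C^\star$, I would tune things so that $\delta'$ grows as $\lambda\to0$. In the rescaled time $\lambda t$, a unit of $\frac1\lambda$ time contains about $\frac{1}{\lambda h}$ disjoint frozen pieces, each giving an independent chance of local coalescence. So the per-unit failure probability tends to $0$ as $\lambda\to0$, and $\delta'\to\infty$, provided reinvasion, which occurs at rate $O(\lambda)$ per edge, is controlled. Making this decay rigorous uniformly under reinvasion, via a comparison to a subcritical branching or oriented-percolation structure whose offspring mean vanishes as $\lambda\to0$, is the step I expect to be hardest.
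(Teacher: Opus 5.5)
Your reduction is equivalent to the paper's. You couple the environments identically from $\eta_0\sim\P_p$ and start the second copy from $\pi(\cdot\mid\eta_0)$; the paper bounds $\max_\sigma\|\Prm^t_\sigma-\pi\|_\TV$ by $\max_{f,g}\Q_{f,g}((\sigma_t,\eta_t)\neq(\sigma_t',\eta_t'))$ and invokes Proposition~\ref{lemma:main_lemma_section_3}. Your local mechanism is also the paper's coupling $\Q_{f,g}$: the ajar configuration is Bernoulli with parameter near $p$, a frozen small cluster coalesces under an optimal coupling, and agreement is then kept by shared randomness.

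The gap is the global step, which is the whole content of that proposition. The bound $\P(\sigma_t(x)\neq\sigma_t'(x))\le e^{-\delta'\lambda t}$ with $\delta'\to\infty$, uniform in $N$ and in the initial spins, is only asserted. Moreover, your heuristic for $\delta'\to\infty$ fails as set up. With a fixed cutoff $a$, the ajar cluster of $x$ exceeds size $a$ with probability $e^{-\Theta(a)}$ in every window, independently of $\lambda$. The hypothesis gives only $h_\kappa(a)<\infty$, with no growth control, so you have no coalescence guarantee on such clusters. They are therefore a $\lambda$-independent source of failure and reinvasion, and counting $1/(\lambda h)$ frozen pieces per unit of time cannot push the per-unit failure probability to $0$. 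The cutoff must grow as $\lambda\to0$, slowly enough that local mixing stays much shorter than the window. The paper does this by taking $L=L_\lambda\to\infty$, cluster sizes at most $\log^2L$, $\varepsilon=\log^{-5}L$, and $L_\lambda$ so slow that $h_{(d/2)\log^{-3}L}(\log^2L)\log^5L\le1/\lambda$.

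Even with a growing cutoff, a naive disagreement-percolation count does not close. By design the environment is almost frozen across consecutive windows, so failure events along a backward space-time chain are strongly correlated and cannot simply be multiplied. At the same time, the chain branches through clusters of random size.

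The paper supplies the missing ingredient in two layers.
\begin{itemize}
\item Inside a box of side $L$ and duration $2C/\lambda$, it tracks the weighted disagreement $\D_\Delta(\sigma,\sigma')=\sum_x\alpha^{\dist(x,\Delta)}\1_{\{\sigma(x)\neq\sigma'(x)\}}$ with $\alpha=e^{-1/\log^2L}$. It proves the conditional contraction $\EE[Y_{k+1}\mid\F_k]\le(3de/\log L)\,Y_k+(\text{boundary term})$ of Claim~\ref{almost_martingale_bound}. Because one conditions on the whole past, only the fresh randomness of the next $\varepsilon/\lambda$-interval enters: boundary-edge rings and failure of the optimal coupling. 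A stopping time discards large ajar clusters, and the weights pay only a factor $\alpha^{-\log^2L}=e$ for spread within one interval.
\item Combined with a definition of good boxes that is robust to arbitrary outside input, so that goodness depends only on updates inside the box, this gives $\Q(\text{good})\ge1-e^{-c\log^2L}$. Liggett--Schonmann--Stacey then dominates the good boxes by Bernoulli site percolation of density $\rho\to1$. Lemma~\ref{lemma:protection}, together with counting $\ell^\infty$-paths of bad boxes, yields coalescence after about $d\log N/|\log((3^{d+1}-2)(1-\rho))|$ steps of length $C/\lambda$. The resulting constant tends to $0$.
\end{itemize}
This renormalisation absorbs large clusters into rare bad boxes instead of handling them site by site, and that is what gives an arbitrary $C^\star$. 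Your proposal locates where such an argument is needed but does not supply it.
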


It may be tempting to think that the above result implies the upper bound in Theorem~\ref{thm} by the following coupling argument. Start one configuration from the stationary distribution and the other from an arbitrary initial configuration. Noting that the dynamics on the environment is simply dynamical percolation, which is a biased random walk on the hypercube, the environments can be coupled in a time of order $\frac{1}{\lambda}\log N$; in fact, the dynamics of the environment alone has a cutoff. So one can perform a two-stage coupling, where in the first stage we couple the environments of the two processes during a time of order $\frac{1}{\lambda}\log N$, and then we perform a second coupling that couples the spins of the two processes. Since one configuration starts from stationarity, one may be inclined to believe that the second stage coupling can be obtained by the optimal coupling from the total variation mixing time starting from a stationary environment, which by Theorem~\ref{thm_stationary} can be carried out in time of order $\frac{1}{\lambda}\log N$. However, there are dependences between the spin configuration and the environment obtained at the end of the first stage. We will deal with those via a different notion of mixing time from stationarity. This will be explained in Section~\ref{sec:prelim}, with the corresponding version of  Theorem~\ref{thm_stationary} for this different notion of mixing time from stationarity being stated in Proposition~\ref{lemma:main_lemma_section_3}.

\begin{remark}
    Under some additional assumptions on $(\Phi^{(N)})_{N\geq 1}$, we also obtain a lower bound on $t_\mix^{\sta,(N)}$, for large enough $N$, which is of order $\log N$ as well. The result is stated precisely in Theorem \ref{thm:lower_bound}; the statement is deferred to Section \ref{sec:lower_bound}, in order to avoid introducing further technical notions in this section. The significance of this result is the following. Theorem~\ref{thm} establishes cutoff when $\lambda=\lambda_N\to0$ as $N\to\infty$. The reason for the cutoff in this case is that the environment dynamics has a cutoff and the mixing time of the spin component starting from a stationary environment is of order smaller than the mixing of the environment. However, we are left with the question of weather there is cutoff in the case of a fixed $\lambda$ that does not depend on $n$. Cutoff in this case would follow if the mixing time from a stationary environment was of order $o(\log N)$. Theorem \ref{thm:lower_bound} tells us that this is not the case for a large class of potentials. This does not rule out the possibility of cutoff, but it suggests the need of a new idea to establish cutoff in this regime of $\lambda$.
\end{remark}

\paragraph{Discussion of the proof.} The proof of the lower bound simply relies on the fact that  the mixing time of $(\sigma_t,\eta_t)_{t\geq 0}$ is bounded from below by the mixing time of $(\eta_t)_{t\geq 0}$, which is known to be of order $\lambda^{-1}\log N$, due to i.i.d.~nature of the updates. 

The difficult part is the upper bound, especially since many of the classic techniques (such as the spectral gap) are not readily available, due to the lack of reversibility and limited knowledge about the invariant measure $\pi$. Instead, we use the coupling technique, which is based on constructing a coupling of two copies $(\sigma_t,\eta_t)_{t\geq 0},(\sigma_t',\eta_t')_{t\geq 0}$ of the process; the upper bound on the mixing time can be obtained by studying the probability (under this coupling) that $\sigma_t\neq\sigma_t'$, $\eta_t\neq\eta_t'$ for $t$ large. 

We design a coupling to exploit that the environment typically evolves more slowly than the spin component. This is achieved by first dividing the time $[0,\infty)$ into disjoint intervals of length $\varepsilon/\lambda$ (for some small $\varepsilon$), which are short from the perspective of the environment but long from the perspective of the spin component, which is achieved by taking $\lambda\ll\varepsilon$. In particular, given one such interval $[a,b)$ of length $\varepsilon/\lambda$, the value of $\varepsilon$ is set in a way to obtain the following:
\begin{itemize}
    \item[(i)] for any typical cluster $\C$ of $\eta_a$, which has size $O(1)$, it is unlikely that any edge adjacent to $\C$ changes its value during $[a,b)$, and in particular, the cluster remains unchanged throughout the time interval with high probability;
    \item[(ii)] the spin value at any particular site is likely to be ``updated'' many times during $[a,b)$.
\end{itemize}

The analysis is simplified by the fact that we can obtain a bound on mixing (see Section \ref{sec:prelim}) by studying a version of the process, where we take $\eta_t=\eta_t'$ for all $t\geq 0$. This allows us to inspect, at the start of each time interval $[a,b)$, the disagreement between $\sigma_a$ and $\sigma_a'$ on sites associated to each cluster of $\eta_a$. Due to (i), the environment in most of those clusters will remain the same during the time interval. Our coupling is constructed so that in clusters for which this is true, we obtain the following:
\begin{itemize}
    \item if there was agreement on the sites of the cluster at time $a$ between $\sigma_a$ and $\sigma_a'$, this agreement is preserved until time $b$, and
    \item even if there was disagreement on at least one site of the cluster at time $a$, there will be agreement at time $b$ with high probability.
\end{itemize}
The main difficulty comes from clusters on which there is disagreement at time $a$ and whose environment changes at the boundary during the time interval, allowing disagreement to propagate to other sites. We need to control how much disagreement can propagate when such bad events happen. 

We solve this issue by dividing the space-time slab $\T_N^d\times[0,\infty)$ into (partially overlapping) space-time boxes, of side-length $L$ in the spatial dimension and $C/\lambda$ in the temporal dimension, where $L$ is a large enough value and $C$ is large enough for the slab to contain many $\varepsilon/\lambda$-intervals. Furthermore, we define a notion of \textit{ajar} clusters, which tell us how far a disagreement can spread from a given site within a $\varepsilon/\lambda$-interval in the worst-case scenario; showing that those are typically small enough is of major importance in overcoming the aforementioned issue. We then define a notion of a \textit{good} (resp.~\textit{bad}) box, so that 
\begin{itemize}
    \item[(a)] the event that a given box is good depends only on the local updates (i.e., on updates to sites and edges in the box, during the duration of the box), and
    \item[(b)] for large enough $t$, we are able to give a sufficient condition for agreement between $\sigma_t$ and $\sigma_t'$ on the entirety of the torus purely in terms of the existence of a time-oriented path of good boxes from time $0$ to time $t$. 
\end{itemize}
The major steps are proving that a given box is good with high probability and defining good boxes carefully enough so that the above properties hold.

\paragraph{Related works.}

Dynamical percolation was first defined in the late 90s by H\"aggstr\"om, Peres and Steif \cite{Haggstrom_Peres_Steif_1997} and independently by Itai Benjamini; for a nice exposition see \cite{Steif2009}. A model related to ours is that of a random walk on dynamical percolation, which was introduced by Peres, Stauffer and Steif \cite{Peres_Stauffer_Steif_2014}. Variety of results followed, for example \cite{Peres_Sousi_Steif_2018, Peres_Sousi_Steif_2019, Hermon_Sousi_2020, Sousi_Thomas_2020, Markering_2024, Gu_Jiang_Peres_Shi_Hao_Yang_2024a, Gu_Jiang_Peres_Shi_Hao_Yang_2024b}. More recently, some modifications of the model have also been explored, namely the biased random walk on dynamical percolation in \cite{Andres_Gantert_Schmid_Sousi_2024, Olzhabayev_Schmid_2025}, and simple random walk on dynamical random cluster in \cite{Lelli_Stauffer_2024, Galanis_Goldberg_Mifsud_2026}. Interacting particle systems on dynamical percolation have only been considered in the past few years. The contact process on dynamical percolation was first studied by Linker and Remenik \cite{Linker_Remenik_2020} and studied further in \cite{Hilario_Ungaretti_Valesin_Vares_2022, Seiler_Sturm_2023, Deshayes_Marchand_2026}. The voter model on dynamical percolation was introduced by Astoquillca \cite{Astoquillca_2026}.

There is also an existing literature studying dynamics of spin system on fixed random environments, though less related to this paper, so we will not attempt to do its span justice. As a short but diverse list of examples, one could see \cite{Cesi_Maes_Martinelli_1997, Dommers_denHollander_Jovanovski_Nardi_2017, Blanca_Gheissari_2023, Bovier_denHollander_Marello_Pulvirenti_Slowik_2024}.


\section{Preliminaries}\label{sec:prelim}

In this section we give some preliminaries on the mixing times of Glauber dynamics of spin systems. We also give a simple proof of the lower bound in Theorem \ref{thm} and give a preliminary upper bound on $t_\mix^{(N)}(\kappa)$, which we use in Section \ref{sec:proof} to prove the upper bound in Theorem \ref{thm}.

\paragraph{Total variation norm and coupling of random variables.}
Let $\mu,\nu$ be two probability measures on a measurable space $\Omega$. The \textit{total variation distance} between $\mu$ and $\nu$ is defined by
$$\|\mu-\nu\|_\TV ~:=~ \sup_{A\subseteq\Omega}|\mu(A)-\nu(A)|,$$
where the supremum is over measurable subsets. When $\Omega$ is finite, one more commonly employs the equivalent characterization
$$\|\mu-\nu\|_\TV ~=~ \frac{1}{2}\sum_{\omega\in\Omega}|\mu(\omega)-\nu(\omega)|.$$
Recall that given two random variables $X$ and $Y$ defined on $\Omega$ with respective laws $\mu$ and $\nu$, a \textit{coupling} between $X$ and $Y$ is a probability measure $\PP$ on $\Omega\times\Omega$, whose marginal distributions are precisely $\mu$ and $\nu$. It is straightforward to verify (see for example the proof of Proposition 4.2 in \cite{Georgii_Haggstrom_Maes_1999}) that given $X,Y$ as above and any coupling $\PP$ between them,
\begin{equation}\label{eq:coupling_inequality}
\|\mu-\nu\|_\TV ~\leq~ \PP(X\neq Y).
\end{equation}
A coupling that attains the equality, i.e., $\|\mu-\nu\|_\TV=\PP(X\neq Y)$, is called the \textit{optimal coupling}. It is a classic result that such a coupling always exists on nice enough spaces -- for example, it suffices to assume that $\Omega$ is a Polish space.

\paragraph{Mixing times of Markov chains.} Let $(X_t)_{t\geq 0}$ be a continuous time Markov chain on a finite state space $\Omega$ and write $\Prm_x^{t}$ for the law of $X_t$ conditional on $X_0=x$. We also write $\Prm_x$ for the law of the entire chain, conditional on $X_0=x$, so that $\Prm_x^t(\cdot)=\Prm_x(X_t\in\cdot)$. If $(X_t)_{t\geq 0}$ is ergodic (that is, recurrent and irreducible), then there exists a unique measure $\pi$ that is invariant with respect to the dynamics of $(X_t)_{t\geq 0}$; moreover, the process converges weakly to $\pi$ as $t\to\infty$, i.e.,
$$X_t ~\xrightarrow{(d)}~ \pi.$$
To quantify the speed of this convergence we define the notion of the \textit{mixing time}:
$$t_\mix(\kappa) ~:=~  \inf\!\Big\{t\geq0:\max_{x\in\Omega}\|\Prm_x^t-\pi\|_\TV\leq\kappa\Big\}, \quad\kappa\in(0,1/2).$$
Classically, one writes $t_\mix:=t_\mix(1/4)$, however this particular choice of $\kappa$ is quite arbitrary as it turns out that 
$$t_\mix(\kappa) ~\leq~ \lceil\log_2(\kappa^{-1})\rceil t_\mix.$$

A classical way to approach obtaining an upper bound on $t_\mix(\kappa)$ utilizes precisely the notion of coupling. Given a Markov chain generator $\mathcal{L}$, a coupling of Markov chains with generator $\mathcal{L}$ is a law $\PP$ on processes $(X_t,Y_t)_{t\geq 0}$ so that the marginal law of both $(X_t)_{t\geq 0}$ and $(Y_t)_{t\geq 0}$ is one of a Markov chain with generator $\mathcal{L}$. In particular, letting $\PP_{\!x,y}$ be a coupling of Markov chains starting from $x$ and $y$ respectively, then $\PP_{\!x,y}((X_t,Y_t)\in\cdot)$ is a coupling of $\Prm_x^t$ and $\Prm_{y}^t$, which implies that
$$\|\Prm_x^t-\Prm_y^t\|_\TV ~\leq~ \PP_{\!x,y}(X_t\neq Y_t).$$
A simple application of the inequality 
$$\|\Prm_x^t-\pi\|_\TV ~\leq~ \max_{y\in\Omega}\|\Prm_x^t-\Prm_y^t\|_\TV$$
(see for example Lemma 4.10 in \cite{Levin_Peres_2017}) then yields that
$$t_\mix(\kappa) ~\leq~ \inf\Big\{t\geq 0: \max_{x,y\in\Omega}\PP_{\!x,y}(X_t\neq Y_t)\leq\kappa\Big\}.$$
Note that given $t\geq 0$ fixed, one can construct a coupling $\PP_{\!x,y}^t$ of $(X_t)_{t\geq 0}$ and $(Y_t)_{t\geq 0}$ started from $x$ and $y$, such that
$$\PP_{\!x,y}^t(X_t\neq Y_t) ~=~ \|\Prm_x^t-\Prm_y^t\|_\TV.$$
We will call such coupling the \textit{$t$-optimal coupling}.

\paragraph{Preliminary bounds on $t_\mix$ and stationary environment.}

We conclude this section by providing a proof of the lower bound in Theorem \ref{thm} and as well as a preliminary upper bound on $t_\mix(\kappa)$ which simplifies the proof of the upper bound in Theorem \ref{thm}. 

Let $\Psf_{\eta}^t$ denote the law of $\eta_t$ conditional on $\eta_0=\eta$. It is trivial to check (note that dynamical percolation can be understood as a random walk on a hypercube) that the mixing time of the environment $(\eta_t)_{t\geq 0}$, defined by
$$t_\mix^\env(\kappa) ~:=~ \inf\Big\{t\geq 0: \max_{\eta}\|\Psf_\eta^t-\P_p\|_\TV\leq\kappa\Big\},$$
where the maximum is over $\eta\in\{0,1\}^{E(\T_N^d)}$, is of order $\lambda^{-1}\log N$. Thus, the following Lemma immediately implies the lower bound in Theorem \ref{thm}.
\begin{lemma}\label{lemma:lower_bound}
    For any choice of $\kappa\in(0,1/2)$, letting $t_\mix(\kappa)$ denote the mixing time of $(\sigma_t,\eta_t)_{t\geq 0}$,
    $$t_\mix^\env(\kappa) ~\leq~ t_\mix(\kappa).$$
\end{lemma}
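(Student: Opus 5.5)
The plan is to use that total variation distance can only decrease under projection to a coordinate, together with the fact that the environment $(\eta_t)_{t\geq 0}$ is an autonomous Markov chain whose law does not depend on the spins.

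\textbf{Step 1: the environment marginal of $\pi$.} First I will identify the $\eta$-marginal of $\pi$ as $\P_p$. The process $(\eta_t)_{t\geq0}$ is itself dynamical percolation, a Markov chain with unique invariant law $\P_p$. Let $(\sigma_0,\eta_0)\sim\pi$. Then $(\sigma_t,\eta_t)\sim\pi$ for all $t$, so the marginal law $\nu$ of $\eta_0$ satisfies $\nu\Psf^t=\nu$. By uniqueness of the invariant measure of the environment chain, $\nu=\P_p$. (Alternatively, $\Psf_{\eta}^t\to\P_p$ for every $\eta$, and we average over $\nu$.)

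\textbf{Step 2: contraction of total variation under projection.} For any $(\sigma,\eta)$, let $f(\sigma',\eta')=\eta'$. The law of $f(\sigma_t,\eta_t)$ under $\Prm_{(\sigma,\eta)}$ is $\Psf_\eta^t$, and the pushforward $\pi\circ f^{-1}$ is $\P_p$. Since
$$\sup_{A}|\mu(f^{-1}A)-\mu'(f^{-1}A)|\leq\sup_{B}|\mu(B)-\mu'(B)|,$$
we obtain
$$\|\Psf_\eta^t-\P_p\|_\TV\leq\|\Prm_{(\sigma,\eta)}^t-\pi\|_\TV.$$

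\textbf{Step 3: conclusion.} Taking the maximum over $\eta$, with an arbitrary $\sigma$ for each $\eta$, gives
$$\max_\eta\|\Psf_\eta^t-\P_p\|_\TV\leq\max_{(\sigma,\eta)}\|\Prm^t_{(\sigma,\eta)}-\pi\|_\TV.$$
At $t=t_\mix(\kappa)$ the right-hand side is at most $\kappa$, using right-continuity in $t$, or simply any $t$ above the infimum followed by a limit. Hence $t_\mix^\env(\kappa)\leq t_\mix(\kappa)$.

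The only step needing care is Step 1, which requires the $\eta$-marginal of $\pi$ to be exactly $\P_p$. It follows from the fact that the environment evolves autonomously of the spins; irreducibility of the joint chain is assumed in the paper.
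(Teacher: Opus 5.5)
Your proposal is correct and follows essentially the same argument as the paper. You project onto the environment coordinate, identify the pushforwards of $\Prm_{\sigma,\eta}^t$ and $\pi$ as $\Psf_\eta^t$ and $\P_p$, and use that total variation cannot increase under projection. The paper does this last step via the sum formula and the triangle inequality, while you use the sup-over-sets form; your uniqueness argument for the $\eta$-marginal of $\pi$ just makes explicit what the paper attributes to the environment evolving independently of the spins.
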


\begin{proof}
    It is sufficient to show that for any choice of $\sigma\in S^{\T_N^d}$ and $\eta\in\{0,1\}^{E(\T_N^d)}$,
    $$\|\Psf_\eta^t-\P_p\|_\TV ~\leq~ \|\Prm_{\sigma,\eta}^t-\pi\|_\TV.$$
    Writing $\Pi_2(\sigma,\eta)=\eta$, we have that
    $$\Psf_\eta^t ~=~ \Prm_{\sigma,\eta}^t\circ\Pi_2^{-1} \quad\text{and}\quad \P_p ~=~ \pi\circ\Pi_2^{-1},$$
    which follows from the fact that $(\eta_t)_{t\geq 0}$ does not depend on $(\sigma_t)_{t\geq 0}$. It thus follows that 
    \begin{align*}
        \|\Psf_\eta^t-\P_p\|_\TV ~&=~ \frac{1}{2}\sum_{\zeta}\big|\Prm_{\sigma,\eta}^t(\Pi_2^{-1}(\zeta))-\pi(\Pi_2^{-1}(\zeta))\big| \\
        &=~ \frac{1}{2}\sum_{\zeta}\Big|\sum_{\xi}\big(\Prm_{\sigma,\eta}^t(\xi,\zeta)-\pi(\xi,\zeta)\big)\Big| \\
        &\leq~ \frac{1}{2}\sum_{\xi,\zeta}\big|\Prm_{\sigma,\eta}^t(\xi,\zeta)-\pi(\xi,\zeta)\big| \\
        &=~ \|\Prm_{\sigma,\eta}^t-\pi\|_\TV.
    \end{align*}
\end{proof}

To obtain a preliminary upper bound on $t_\mix(\kappa)$ one can, exploit the fact that one can construct a coupling such that after $(\eta_t)_{t\geq 0}$ and $(\eta_t')_{t\geq 0}$ coalesce, they remain in agreement. This suggests that we can assume that the environment starts from stationarity. However, as we will see below, there are some difficulties with that approach which require an additional technical result. 

Given $\sigma\in S^{\T_N^d}$, we write $\Prm_\sigma^t(\cdot):=\sum_{\eta}\P_p(\eta)\Prm_{\sigma,\eta}^t(\cdot)$ for the law of $(\sigma_t,\eta_t)$ when started from spin configuration $\sigma$ and from stationary environment. Moreover, given a function $f:\{0,1\}^{E(\T_N^d)}\to S^{\T_N^d}$, write $\Prm_f^t(\cdot):=\sum_{\eta}\P_p(\eta)\Prm_{f(\eta),\eta}^t(\cdot)$ for the law of chain $(\sigma_t,\eta_t)$ started from stationary environment and a spin configuration that depends on the realization of the environment via $f$. The motivation for the latter is as follows. For fixed $t\geq 0$ and $f,g:\{0,1\}^{E(\T_N^d)}\to S^{\T_N^d}$, let $\PP_{\!f,g}^t$ be some coupling of $\Prm_f^t$ and $\Prm_g^t$ obtained by sampling $\eta\sim\P_p$ and then running some coupling of $\Prm_{f(\eta),\eta}^t$ and $\Prm_{g(\eta),\eta}^t$. Then,
$$\PP_{\!f,g}^t\big((\sigma_t,\eta_t)\neq(\sigma_t',\eta_t')\big) ~\geq~ \sum_{\eta}\P_p(\eta)\|\Prm_{f(\eta),\eta}^t-\Prm_{g(\eta),\eta}^t\|_\TV,$$
so in particular
$$\max_{f,g}\PP_{f,g}^t\big((\sigma_t,\eta_t)\neq(\sigma_t',\eta_t')\big) ~\geq~ \sum_{\eta}\P_p(\eta)\max_{\sigma,\sigma'}\|\Prm_{\sigma,\eta}^t-\Prm_{\sigma',\eta}^t\|_\TV,$$
using that maximizing over $f,g$ is equivalent to maximizing over $\sigma,\sigma'$ for each particular realization of $\eta$.
\begin{lemma}\label{lemma:upper_bound}
    Let $\PP_{\!f,g}$ be any coupling of the copies of the chain obtained by sampling $\eta\sim\P_p$ and given each realization running some coupling of $\Prm_{f(\eta),\eta}$ and $\Prm_{g(\eta),\eta}$. Then,
    $$t_\mix(\kappa) ~\leq~ t_\mix^\env(\kappa/2)+\inf\!\big\{t\geq 0:\max_{f,g}\PP_{\!f,g}\big((\sigma_t,\eta_t)\neq(\sigma_t',\eta_t')\big)\leq\kappa/2\big\},$$
    where the maximum is over $f,g:\{0,1\}^{E(\T_N^d)}\to S^{\T_N^d}$.
\end{lemma}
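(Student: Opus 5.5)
The plan is a two-stage coupling. Set $T_1:=t_\mix^\env(\kappa/2)$, let $T_2$ be the infimum on the right-hand side, and let $t=T_1+T_2$. By the coupling bound from Section~\ref{sec:prelim}, it suffices to construct, for any two initial states $(\sigma,\eta)$ and $(\sigma',\eta')$, a coupling of the two chains with $\PP((\sigma_t,\eta_t)\neq(\sigma_t',\eta_t'))\leq\kappa$. The coupling bound applies to couplings of the laws at time $t$, so the coupling need not be Markovian. I will also use $\|\Prm_x^t-\pi\|_\TV\le\max_y\|\Prm_x^t-\Prm_y^t\|_\TV$.

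\textbf{First stage.} This runs on $[0,T_1]$ and deals only with the environment. Run the two chains independently of each other on $[0,T_1]$, including their spin updates. Each environment at time $T_1$ is then within total variation $\kappa/2$ of $\P_p$. The cleaner route is to compare each chain with $\P_p$ separately rather than to force the two environments to agree.

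Consider the first chain. Its state $(\sigma_{T_1},\eta_{T_1})$ has law $\nu$, and its $\eta$-marginal $\nu_2$ satisfies $\|\nu_2-\P_p\|_\TV\le\kappa/2$. Take an optimal coupling of $\eta_{T_1}$ with $\tilde\eta\sim\P_p$. Define $f(\tilde\eta)$ by choosing, for each $\tilde\eta$, a spin configuration so that on the event $\{\eta_{T_1}=\tilde\eta\}$ the pair $(\sigma_{T_1},\eta_{T_1})$ is reproduced.

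This step is the main obstacle. Given $\tilde\eta$, the spin $\sigma_{T_1}$ is random and is not a deterministic function of the environment, while Lemma~\ref{lemma:upper_bound} only allows deterministic $f$. I would handle this by conditioning. Sample $\tilde\eta\sim\P_p$ and the spin $\xi$ from the conditional law $\nu(\cdot\mid\eta=\tilde\eta)$, extended arbitrarily wherever $\nu_2(\tilde\eta)=0$. Given $\xi$, this initial law is exactly $\Prm_{f_\xi}$ type data: a mixture over $\xi$ of deterministic-spin configurations paired with a stationary environment. Since the maximum over $f,g$ dominates any such mixture (it is attained pointwise in $\eta$, as remarked before the lemma), the bound defining $T_2$ still applies, with $\kappa/2$ error.

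\textbf{Second stage.} Build the coupling as follows. Glue the real first chain $(\sigma_{T_1},\eta_{T_1})$ to the auxiliary start $(\xi,\tilde\eta)$ so that they coincide except with probability $\|\nu_2-\P_p\|_\TV\le\kappa/2$. This is possible because, when the environments agree, the conditional spin laws agree by construction. Do the same for the second chain, producing $(\xi',\tilde\eta')$.

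The two auxiliary starts should share the same environment $\tilde\eta$, since $\PP_{f,g}$ samples one $\eta\sim\P_p$. I therefore couple both real chains to a common $\tilde\eta$. This can be done because each real environment is independently close to $\P_p$: glue both optimal couplings through the common $\tilde\eta$, then draw $\xi,\xi'$ conditionally.

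From time $T_1$ onward, run the auxiliary pair under $\PP_{f,g}$ for time $T_2$, and let each real chain follow its auxiliary partner whenever the two coincide at $T_1$. The disagreement probability is then at most the probability that the first real chain misses its auxiliary start, plus the probability that the second does, plus $\kappa/2$.

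This gives $\kappa/2+\kappa/2+\kappa/2$, which is too much by a factor. To fix the constants I would instead bound $\|\Prm^t_{\sigma,\eta}-\pi\|_\TV$ directly. Take the second chain started from $\pi$, whose environment is exactly $\P_p$ at every time, so its gluing error is zero. Only the first chain's gluing error $\kappa/2$ and the stage-two error $\kappa/2$ remain, giving total error $\kappa$ and hence $t_\mix(\kappa)\le T_1+T_2$.
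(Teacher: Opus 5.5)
Your proposal is correct and is essentially the paper's argument. You compare directly with a copy started from $\pi$, and pay $\|\Psf^{T_1}_\eta-\P_p\|_\TV\le\kappa/2$ for optimally coupling the time-$T_1$ environments. From a common stationary environment you then bound the rest by $\sum_\zeta\P_p(\zeta)\max_{\xi,\xi'}\|\Prm^{T_2}_{\xi,\zeta}-\Prm^{T_2}_{\xi',\zeta}\|_\TV$, which is at most $\max_{f,g}\PP_{\!f,g}\big((\sigma_{T_2},\eta_{T_2})\neq(\sigma_{T_2}',\eta_{T_2}')\big)$. Your surrogate $(\xi,\tilde\eta)$ with exactly stationary environment just stands in for the paper's estimate that both time-$T_1$ environments equal $\zeta$ with probability at most $\P_p(\zeta)$.

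Two wording points need tightening. First, the joint law at time $T_1$ must be the glued one, not the product law coming from ``running the chains independently''; under a product law the two environments would essentially never agree. This is harmless, because only the time-$T_1$ marginals $\Prm^{T_1}_{\sigma,\eta}$ and $\pi$ matter. Second, phrase the mixture as one over random maps $F$ independent of $\tilde\eta$, or simply use the pointwise maximum above; conditioning on the realised spin $\xi$ would destroy the stationarity of $\tilde\eta$.
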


\begin{proof}
    Let $T_1=t_\mix^\env(\kappa/2)$ and 
    $$T_2~=~\inf\!\Big\{t\geq 0:\max_{f,g}\sum_{\eta}\P_p(\eta)\|\Prm_{f(\eta),\eta}^t-\Prm_{g(\eta),\eta}^t\|_\TV\leq\kappa/2\Big\}.$$
    It is sufficient to prove that for $T=T_1+T_2$, the following holds, for any choice of $\sigma,\eta$:
    \begin{equation}\label{eq:upper_bound_lemma}
    \|\Prm_{\sigma,\eta}^T-\pi\|_\TV ~\leq~ \|\Psf_\eta^{T_1}-\P_p\|_\TV + \max_{f,g}\sum_{\zeta}\P_p(\zeta)\|\Prm_{f(\zeta),\zeta}^{T_2}-\Prm_{g(\zeta),\zeta}^{T_2}\|_\TV.
    \end{equation}
    Now fix $t\geq 0$ and let $\PP_{\sigma,\eta,\pi}^{t}$ denote a coupling of $(\sigma_t,\eta_t)\sim\Prm_{\sigma,\eta}^t$ and $(\sigma_t',\eta_t')\sim\pi$, obtained by sampling $(\eta_t,\eta_t')$ according to the optimal coupling of $\Psf_\eta^t$ and $\P_p$ and then sampling $(\sigma_t,\sigma_t')$ according to the product measure $\Prm_{\sigma,\eta}^t(\cdot|\eta_t)\otimes\pi(\cdot|\eta_t')$. By definition of this coupling,
    $$\PP_{\sigma,\eta,\pi}^t(\eta_t\neq\eta_t') ~=~ \|\Psf_\eta^t-\P_p\|_\TV.$$
    Moreover, write $\PP_{\sigma,\sigma',\eta}^{t}$ for the coupling of $(\sigma_t,\eta_t)\sim\Prm_{\sigma,\eta}^t$ and $(\sigma_t',\eta_t')\sim\Prm_{\sigma',\eta}^t$ obtained by sampling $\eta_t\sim\Psf_{\eta}^t$, setting $\eta_t'=\eta_t$ and then sampling $(\sigma_t,\sigma_t')$ according to the optimal coupling of $\Prm_{\sigma,\eta}^t(\cdot|\eta_t)$ and $\Prm_{\sigma',\eta}^t(\cdot|\eta_t')$. One can see that under this coupling
    \begin{align*}
        \PP_{\sigma,\sigma',\eta}^t\big((\sigma_t,\eta_t)\neq(\sigma_t',\eta_t')\big) ~&=~ \PP_{\sigma,\sigma',\eta}^t(\sigma_t\neq\sigma_t') \\
        &=~ \sum_{\zeta}\Psf_\eta^t(\zeta)\|\Prm_{\sigma,\eta}^t(\cdot|\eta_t=\zeta)-\Prm_{\sigma',\eta}^t(\cdot|\eta_t=\zeta)\|_{\TV} \\
        &=~ \frac{1}{2}\sum_{\xi,\zeta}\big|\Prm_{\sigma,\eta}^t(\xi,\zeta)-\Prm_{\sigma',\eta}^t(\xi,\zeta)\big| \\
        &=~ \|\Prm_{\sigma,\eta}^t-\Prm_{\sigma',\eta}^t\|_\TV,
    \end{align*}
    i.e., $\PP_{\sigma,\sigma',\eta}^t$ is the optimal coupling of $\Prm_{\sigma,\eta}^t$ and $\Prm_{\sigma',\eta}^t$. In the third equality, we used $\Prm_{\sigma,\eta}^t(\zeta)=\Psf_{\eta}^t(\zeta)$, which holds because dynamical percolation is independent of the spin component. 

    We now use the above to give an upper bound on the left hand side in \eqref{eq:upper_bound_lemma}. Let $\widehat\PP^T$ be a coupling of $(\sigma_T,\eta_T)\sim\Prm_{\sigma,\eta}^T$ and $(\sigma_T',\eta_T')\sim\pi$ obtained by first sampling $((\sigma_{T_1},\eta_{T_1}),(\sigma_{T_1}',\eta_{T_1}'))$ according to coupling $\PP_{\sigma,\eta,\pi}^{T_1}$ and then 
    \begin{itemize}
        \item[(i)] if $\eta_{T_1}=\eta_{T_1}'$, sampling $((\sigma_{T},\eta_T),(\sigma_T',\eta_T'))$ according to $\PP_{\sigma_{T_1},\sigma_{T_1}',\eta_{T_1}}^{T_2}$, and
        \item[(ii)] otherwise, sample them with respect to some arbitrary coupling of $\Prm_{\sigma_{T_1},\eta_{T_1}}^{T_2}$ and $\Prm_{\sigma_{T_1}',\eta_{T_1}'}^{T_2}$. 
    \end{itemize}
    Then, $\|\Prm_{\sigma,\eta}^T-\pi\|_\TV$ is bounded from above by
    \begin{align*}
        &\widehat\PP^T\big((\sigma_{T},\eta_{T})\neq(\sigma_{T}',\eta_{T}')\big) \\
        &\leq~ \sum_{\xi,\xi',\zeta,\zeta'}\PP_{\sigma,\eta,\pi}^{T_1}\big((\xi,\zeta),(\xi',\zeta')\big)\big[\1_{\{\zeta\neq\zeta'\}}+\1_{\{\zeta=\zeta'\}}\PP_{\xi,\xi',\zeta}^{T_2}(\sigma_{T}\neq\sigma_T')\big] \\
        &=~ \PP_{\sigma,\eta,\pi}^{T_1}(\eta_{T_1}\neq\eta_{T_1}') + \sum_{\xi,\xi',\zeta}\PP_{\sigma,\eta,\pi}^{T_1}\big((\xi,\zeta),(\xi',\zeta)\big)\PP_{\xi,\xi',\zeta}^{T_2}(\sigma_{T}\neq\sigma_T') \\
        &=~ \|\Psf_\eta^{T_1}-\P_p\|_\TV + \sum_{\xi,\xi',\zeta}\PP_{\sigma,\eta,\pi}^{T_1}\big((\xi,\zeta),(\xi',\zeta)\big)\|\Prm_{\xi,\zeta}^{T_2}-\Prm_{\xi',\zeta}^{T_2}\|_\TV.
    \end{align*}
    Noting that under $\PP_{\sigma,\eta,\pi}^{T_1}$, (i) conditionally on $\eta_{T_1}'$, the variable $\eta_{T_1}$ does not depend on $\sigma_{T_1}'$, and (ii) conditionally on $\eta_{T_1}$, the variable $\sigma_{T_1}$ does not depend on $(\sigma_{T_1}',\eta_{T_1}')$, one can decompose the probability $\PP_{\sigma,\eta,\pi}^{T_1}((\xi,\zeta),(\xi',\zeta))$ as
    $$\P_p(\zeta)\pi(\xi'|\zeta)\PP_{\sigma,\eta,\pi}^{T_1}(\eta_{T_1}=\zeta|\eta_{T_1}'=\zeta)\PP_{\sigma,\eta,\pi}^{T_1}(\sigma_{T_1}=\xi|\eta_{T_1}=\zeta).$$
    It follows that
    \begin{align*}
        &\sum_{\xi,\xi',\zeta}\PP_{\sigma,\eta,\pi}^{T_1}\big((\xi,\zeta),(\xi',\zeta)\big)\|\Prm_{\xi,\zeta}^{T_2}-\Prm_{\xi',\zeta}^{T_2}\|_\TV \\
        &~\leq~ \sum_{\zeta}\P_p(\zeta)\max_{\tilde{\xi},\hat{\xi}}\|\Prm_{\tilde{\xi},\zeta}^{T_2}-\Prm_{\hat{\xi},\zeta}^{T_2}\|_\TV\sum_{\xi}\PP_{\sigma,\eta,\pi}^{T_1}(\sigma_{T_1}=\xi|\eta_{T_1}=\zeta)\sum_{\xi'}\pi(\xi'|\zeta) \\
        &~=~ \sum_{\zeta}\P_p(\zeta)\max_{\xi,\xi'}\|\Prm_{\xi,\zeta}^{T_2}-\Prm_{\xi',\zeta}^{T_2}\|_\TV \\
        &~=~\max_{f,g}\sum_{\zeta}\P_p(\zeta)\|\Prm_{f(\zeta),\zeta}^{T_2}-\Prm_{g(\zeta),\zeta}^{T_2}\|_\TV,
    \end{align*}
    establishing \eqref{eq:upper_bound_lemma} and hence concluding the proof.
\end{proof}

Having obtained this bound, the mixing result in Theorem \ref{thm} can be obtained constructing a suitable collection of couplings $\PP_{\!f,g}=\PP_{\!f,g}^{N,\lambda}$ (corresponding to the system on $\T_N^d$ at environment speed $\lambda$) as above and a sequence $b_N(\lambda)=O(\lambda^{-1}\log N)$ with
$$\inf\!\big\{t\geq 0:\max_{f,g}\PP_{\!f,g}^{N,\lambda}(\sigma_t\neq\sigma_t')\leq\kappa/2\big\} ~\leq~ b_N(\lambda).$$
Since dynamical percolation undergoes a cutoff with window $O(1)$, we obtain the cutoff result in Theorem \ref{thm} if $b_N(\cdot)$ is such that, if fixed $\lambda$ is replaced by $\lambda_N\to 0$, we also have that $b_N(\lambda_N)=o(\lambda_N^{-1}\log N)$. For the latter to hold, it is sufficient to have $b_N(\lambda_N)=o(\lambda_N^{-1}\log N/|\log(1-\rho_N)|)$ for some $\rho_N\to 1$.


\paragraph{Some miscellaneous notation}

Before moving on to proving Theorem \ref{thm}, we introduce and comment on some notation and terminology that we will use throughout Section \ref{sec:proof}. 

Given some percolation configuration, we shall write $\C(x)$ for the connected component containing site $x$. In particular, given dynamical percolation $(\eta_t)_{t\geq 0}$ and fixed time $t\geq 0$, we shall write $\C_t(x)$ for the connected component of $\eta_t$ containing site $x$. 

Given a configuration $\sigma\in S^{\T_N^d}$ and a subset $\Delta\subseteq\T_N^d$, we will write $\sigma(\Delta):=(\sigma(x):x\in \Delta)\in S^\Delta$ for the restriction of $\sigma$ to $\Delta$. Moreover, given some percolation cluster $\C=(V(\C),E(\C))$, we will abuse the notation and write $\sigma(\C)$ for the restriction of $\sigma$ to $V(\C)$. On a similar note, given $\sigma,\sigma'\in S^{\T_N^d}$, we will say that $\sigma$ and $\sigma'$ agree on $\C$ or write $\sigma(\C)=\sigma'(\C)$ if $\sigma(x)=\sigma'(x)$ for all $x\in V(\C)$. 

Throughout the following section, we deal with a variety of different graphs and distances on them. Recall from Section \ref{sec:intro} that we write $d_G(\cdot,\cdot)$ for a graph distance on graph $G$. The underlying graph for the spin system is the discrete torus $(\T_N^d,E(\T_N^d))$, whose graph distance we simply denote by $\dist(\cdot,\cdot)$; we also write
$$\dist(x,\Delta) ~:=~ \min\{\dist(x,y):y\in\Delta\}$$
to denote the distance between a site $x\in\T_N^d$ and some subset $\Delta\subseteq\T_N^d$. To prove Theorem \ref{thm}, we define a collection of overlapping space-time boxes, which will be indexed in a set $\T_M^d\times\N$, where $\T_M^d$ is some smaller torus indexing spatial boxes. First, by abuse of notation, we define the following the following two distances on $\T_M^d$: for $i=(i_1,\ldots,i_d)$ and $j=(j_1,\ldots,j_d)$ on $\T_M^d$, we will write
\begin{align*}
    \|i-j\|_1 ~&:=~ \sum_{k=1}^d d_{\T_M}(i_k,j_k), \\
    \|i-j\|_\infty ~&:=~ \max_{1\leq k\leq d}d_{\T_M}(i_k,j_k),
\end{align*}
where $\T_M=\Z/M\Z$ is a one-dimensional torus. The reader should note that neither of the distances defined above is actually a norm, since we are on a torus. Moreover, by further abuse of notation, we define an analogue of those on the set $\T_M^d\times\N$ via
\begin{align*}
    \|(i,n)-(j,m)\|_1 ~&:=~ \|i-j\|_1+|n-m|, \\
    \|(i,m)-(j,m)\|_\infty ~&:=~ \max\{\|i-j\|_\infty,|n-m|\},
\end{align*}
where $i,j\in\T_M^d$ and $n,m\in\N$. As seen above, it will be clear whether either of the distances defined  above is considered on $\T_M^d$ or $\T_M^d\times\N$ from the way we denote the entries. Towards the end of Section \ref{sec:proof} we will also use a notion of $\ell^p$-paths, $p\in\{1,\infty\}$: we say that a collection $i_1,\ldots,i_k\in\T_M^d$ is an $\ell^1$-path (resp.~$\ell^\infty$-path) if for any $j=1,\ldots,k-1$, we have $\|i_j-i_{j+1}\|_1=1$ (resp.~$\|i_j-i_{j+1}\|_\infty=1$). An analogue definition extends to a collection $(i_1,n_1),\ldots,(i_k,n_k)$ on $\T_M^d\times\N$. 

Throughout the paper, we write $\log(\cdot)$ to denote the natural logarithm.


\section{Proof of Theorems \ref{thm} and \ref{thm_stationary}}\label{sec:proof}

In this section, we complete the proof of Theorem \ref{thm} and Theorem \ref{thm_stationary}. In particular, we provide the proof of the upper bound in \eqref{eq:theorem}, having already obtained the lower bound via Lemma \ref{lemma:lower_bound}. In light of Lemma \ref{lemma:upper_bound}, it is sufficient to prove the following statement.

\begin{proposition}\label{lemma:main_lemma_section_3}
    Let $(\Phi^{(N)})_{N\geq 1}$ be as in Theorem \ref{thm} and let $d\geq1$, $\kappa\in(0,1/2)$ and $p<p_c(d)$ be fixed. For any choice of $f,g:\{0,1\}^{E(\T_N^d)}\to S^{\T_N^d}$, there exists a coupling $\Q_{f,g}$ of chains $(\sigma_t,\eta_t)_{t\geq 0}$ and $(\sigma_t',\eta_t')_{t\geq 0}$ started from $(\sigma_0,\eta_0)=(f(\eta),\eta)$ and $(\sigma_0',\eta_0')=(g(\eta),\eta)$ respectively, where $\eta\sim\P_p$, such that for each $C^\star>0$ there exist $\lambda_0>0$, $N_0\in\N$, so that for all $\lambda<\lambda_0$ and $N> N_0$,
    $$\inf\!\big\{t\geq 0:\max_{f,g}\Q_{f,g}\big((\sigma_t,\eta_t)\neq(\sigma_t',\eta_t')\big)\leq \kappa/2\big\}~\leq~ \frac{C^\star}{\lambda}\log N.$$
\end{proposition}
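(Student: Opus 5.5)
We will build $\Q_{f,g}$ with identical environments, $\eta_t=\eta_t'$ for all $t$, driven by the same edge clocks and Bernoulli samples. Then the event in the proposition reduces to $\{\sigma_t\neq\sigma_t'\}$. Time is divided into intervals $I_k=[k\varepsilon/\lambda,(k+1)\varepsilon/\lambda)$. At the start $a$ of each interval we look at the clusters $\C_a(x)$ of $\eta_a$. On a cluster $\C$ whose boundary and internal edges do not ring during $I_k$, the spins on $V(\C)$ form an autonomous Glauber dynamics on $\C$ for time $\varepsilon/\lambda$. There we use the following coupling:
\begin{itemize}
\item if $\sigma_a(\C)=\sigma_a'(\C)$, we use the identity coupling, so agreement persists;
\item otherwise we use the $(\varepsilon/\lambda)$-optimal coupling of Glauber dynamics on $\C$.
\end{itemize}
Since $|V(\C)|\le a_0$ for typical clusters and $h_\kappa(a_0)<\infty$, submultiplicativity gives failure probability at most $\kappa_0^{\lfloor \varepsilon/(\lambda h(a_0))\rfloor}$, which is tiny once $\lambda\ll\varepsilon$. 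On all other clusters (large, or with an edge ringing on or next to them) we use the identity update coupling: same site clocks and same uniform variables, via the monotone/quantile coupling of $\mu^x_{\sigma,\eta}$ and $\mu^x_{\sigma',\eta}$. Under this coupling a site can only become a disagreement if some neighbour along an open edge disagrees.

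Next we control how far disagreement can propagate inside one interval. We define the \emph{ajar} cluster of $x$ in $I_k$ as the cluster of $x$ in the union $\bigcup_{t\in I_k}\eta_t$ of all edges open at some time in $I_k$. Disagreement created at $x$ during $I_k$ stays inside this set. The union configuration is Bernoulli with parameter $p'=p+(1-p)(1-e^{-\varepsilon})$, and $p'<p_c$ for small $\varepsilon$. Subcritical exponential decay then makes ajar clusters larger than $\ell$ have probability $e^{-c\ell}$. A cluster is \emph{bad} in $I_k$ if some edge touching it rings; by (i) of the discussion this has probability $O(\varepsilon a_0)$.

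We then renormalize. Space-time is tiled by overlapping boxes indexed by $\T_M^d\times\N$, with $M\approx N/L$, spatial side $L$ and temporal length $C/\lambda$, so each box contains $C/\varepsilon$ intervals. A box is \emph{good} if three things hold, and each depends only on updates within the enlarged box:
\begin{itemize}
\item every ajar cluster meeting it has diameter at most $L/10$;
\item every cluster meeting the core in any interval has size at most $a_0$;
\item in the last interval each core cluster is untouched and its optimal coupling succeeds, and in no interval does a disagreement entering from the boundary travel more than $L/10$.
\end{itemize}
Deterministically, if box $(i,n)$ is good, the spins agree on its core at its end time, regardless of the input: disagreements inside are erased in the final interval, and those entering from outside cannot reach the core in a single interval. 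We also need agreement to propagate, so that if all boxes in an $\ell^\infty$-neighbourhood at level $n$ are good, the core of $(i,n+1)$ starts with agreement. We expect the notion of goodness to need tuning so that both implications hold. Once they do, $\sigma_t\neq\sigma_t'$ at $t=K C/\lambda$ forces, for some site, a time-oriented $\ell^\infty$-path of bad boxes, or else the absence of a good covering at the final level. A cleaner sufficient event is that some spatial box has bad boxes at a positive fraction of levels near the top; we will aim for a Peierls-type bound of the form $N^d(\text{const}\cdot q)^{K}$.

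Goodness has finite range of dependence, so by Liggett–Schonmann–Stacey the bad boxes are dominated by a product measure with density $q(\varepsilon,\lambda,L,C)$. This $q$ can be made arbitrarily small by choosing, in order: $\varepsilon$ small, then $L$ and $a_0$ large, then $\lambda$ small. A union bound over the $N^d$ starting positions and the paths of length $K$ gives failure probability at most $N^d(3^{d+1}q)^K$, which is at most $\kappa/2$ once $K\ge \frac{d\log N+\log(2/\kappa)}{|\log(3^{d+1}q)|}$. Hence $t=KC/\lambda\le C^\star\lambda^{-1}\log N$ as soon as $q$ is small enough that $Cd/|\log(3^{d+1}q)|<C^\star$. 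The main obstacle is the definition of a good box: it must be simultaneously local and strong enough to give the deterministic agreement implication despite disagreements arriving from bad neighbouring regions. We will handle this using the ajar-cluster bound together with the overlap of the boxes.
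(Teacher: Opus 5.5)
\textbf{There is a genuine gap.} Your architecture matches the paper's: identical environments with the cluster-wise identity/optimal coupling, ajar clusters, space-time boxes of side $L$ and duration of order $C/\lambda$, Liggett--Schonmann--Stacey domination, and a Peierls count of order $M^d(3^{d+1}q)^K$. However, the step you defer as ``the main obstacle'' is the substance of the proof, and the goodness notion you do write down fails.

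\emph{Your third condition has vanishing probability.} It asks that every cluster meeting the core be untouched during the last interval. Every edge incident to a core site lies in $E^+$ of some core cluster. So this event is contained in the event that none of the $\Theta(L^d)$ edges incident to the core rings in a window of length $\varepsilon/\lambda$, which has probability $e^{-\Theta(\varepsilon L^d)}$. With your order of choices ($\varepsilon$ fixed first, then $L\to\infty$) this tends to $0$, so $q\to 1$ rather than $q\to 0$.

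\emph{Reversing the order does not fix the deeper problem.} Taking $\varepsilon\ll L^{-d}$ would rescue that one condition. But nothing in your conditions controls the \emph{accumulation} of disagreement over the $C/\varepsilon$ intervals of a box. That a disagreement moves at most one ajar diameter per interval is automatic from your first condition. The issue is that it can hop through touched clusters in interval after interval, and a good box must both erase disagreement and \emph{protect} existing agreement against this for its whole window. Without such a protection property there is no deterministic reduction of $\{\sigma_t\neq\sigma_t'\}$ to a time-oriented $\ell^\infty$-path of bad boxes. Your fallback ``absence of a good covering at the final level'' cannot be part of the failure event either: for any fixed $q>0$ its probability tends to $1$ as $N\to\infty$, since that level contains order $M^d$ pairwise non-overlapping, hence independent, boxes.

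\emph{The missing idea is a quantitative healing-versus-spreading estimate, with $\varepsilon$ decreasing in $L$.}
\begin{itemize}
\item The paper takes $\varepsilon=\log^{-5}L$ and requires in (\ref{A1}) that all ajar clusters have size at most $\log^2 L$, uniformly in outside inputs.
\item It tracks $Y_k=\1_{\{\tau>k\}}\D_\Delta(\sigma_{t_k},\sigma'_{t_k})$, where $\D_\Delta(\sigma,\sigma')=\sum_x\alpha^{\dist(x,\Delta)}\1\{\sigma(x)\neq\sigma'(x)\}$ and $\alpha=e^{-1/\log^2 L}$.
\item A disagreeing cluster is healed except with probability $\delta\le d\log^{-3}L$ (for $\lambda$ small), unless one of its fewer than $2d\log^2 L$ incident edges rings. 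That happens with probability at most $2d\log^{-3}L$, and then the disagreement spreads over at most $\log^2 L$ sites within distance $\log^2 L$, inflating the weight by at most $e\log^2 L$.
\item This gives Claim~\ref{almost_martingale_bound}: $\EE[Y_{k+1}\mid\F_k]\le(3de/\log L)\,Y_k+2dL^{d-1}\alpha^{\dist(\partial\widetilde\Lambda_i,\Delta)}$.
\item Iterating over $C\log^5 L$ intervals yields (\ref{A2}): agreement on $\overline{\Lambda_i^\core}$ at the end of the box, for any input. Starting from agreement on $\overline{\Lambda_i^\core}$, it yields (\ref{A3}): agreement on $\Lambda_i^\core$ throughout the second half.
\item The half-overlapping time windows let a good box at level $n-1$ hand agreement to the second half of level $n$.
\item (\ref{A3}) turns the good boxes surrounding an $\ell^\infty$-cluster of bad boxes into a shield (Lemma~\ref{lemma:protection}). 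This is exactly what traces a disagreement at time $(n+1)C/\lambda$ back through $\ell^\infty$-adjacent bad boxes to level $1$.
\end{itemize}

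Before your Peierls bound applies, you need some version of all three ingredients: a scaling of $\varepsilon$ against $L$; a multi-interval subcritical spreading bound, whether via this contraction or via path-counting over chains of touched clusters; and an (\ref{A3})-type protection condition together with the shielding lemma.
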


Equipped with Proposition~\ref{lemma:main_lemma_section_3}, the proof of the main results from Section \ref{sec:intro} become trivial.

\begin{proof}[Proof of Theorem \ref{thm}]
    Let first $\lambda>0$ be fixed. The lower bound in \eqref{eq:theorem} is an immediate corollary of Lemma \ref{lemma:lower_bound}. Combining Lemma \ref{lemma:upper_bound} and Proposition~\ref{lemma:main_lemma_section_3}, we obtain that for any $C^\star>0$,
    $$t_\mix(\kappa) ~\leq~ t_\mix^\env(\kappa/2)+\frac{C^\star}{\lambda}\log N$$
    for any $\lambda$ small enough and $N$ large enough. The upper bound in Theorem \eqref{eq:theorem} then follows from the fact that $t_\mix^\env(\kappa/2)$ is of order $\lambda^{-1}\log N$. When instead $\lambda_N\to 0$, the cutoff statement follows by invoking the same statements as above along with the fact that for each $N$ large enough, we can find $C_N^\star>0$, so that 
    $$\inf\big\{t\geq 0\}\max_{f,g}\Q\big((\sigma_t,\eta_t)\neq(\sigma_t',\eta_t')\big)\big\}~\leq~ \frac{C_N^\star}{\lambda_N}\log N,$$
    and $C_N^\star\to 0$ as $N\to\infty$.
\end{proof}

\begin{proof}[Proof of Theorem \ref{thm_stationary}]
    Given Proposition~\ref{lemma:main_lemma_section_3}, it is sufficient to argue that 
    $$\max_{\sigma}\|\Prm_\sigma^{t}-\pi\|_\TV~\leq~ \max_{f,g}\Q_{f,g}\big((\sigma_t,\eta_t)\neq(\sigma_t',\eta_t')\big), \quad \forall t\geq 0,$$
    where $\Prm_\sigma^t$ denotes the law of $(\sigma_t,\eta_t)$ when $\sigma_0=\sigma$ and $\eta_0\sim\P_p$. Indeed, it holds that
    $$\max_{\sigma}\|\Prm_\sigma^t-\pi\|_\TV ~\leq~ \max_{\sigma,\sigma'}\|\Prm_\sigma^t-\Prm_{\sigma'}^t\|_\TV ~\leq~ \max_{f,g}\|\Prm_{f}^t-\Prm_{g}^t\|_\TV,$$
    recalling that $\Prm_f^t(\cdot)=\sum_{\eta}\P_p(\eta)\Prm_{f(\eta),\eta}^t$. The result is completed by recalling that for any choice of $f,g$ and any coupling $\PP_{f,g}^t$ of $\Prm_f^t$ and $\Prm_g^t$, it holds that
    $$\|\Prm_f^t-\Prm_g^t\|_\TV ~\leq~ \PP_{f,g}^t\big((\sigma_t,\eta_t)\neq(\sigma_t',\eta_t')\big).$$
\end{proof}

The rest of the section is dedicated to proving Proposition~\ref{lemma:main_lemma_section_3} and proceeds as follows. First, in Section \ref{subsec:coupling} we construct the couplings for which the above inequality will be proven. Then, in Section \ref{subsec:boxes}, we divide the space-time slab into (overlapping) boxes and define an event of a certain box being considered \textit{good} (or \textit{bad}). Moreover, we show that under this coupling, the probability that any particular box is good is sufficiently large, uniformly in $f,g$. Lastly, in  Section \ref{subsec:sufficient_perc} we establish a sufficient condition for coalescence of the coupled copies of the process in terms of sufficient non-percolation of bad boxes.


\subsection{Construction of the coupling}\label{subsec:coupling}

We have four processes $\eta=(\eta_t)_{t\geq 0}$, $\eta'=(\eta_t')_{t\geq 0}$ $\sigma=(\sigma_t)_{t\geq 0}$ and $\sigma'=(\sigma_t')_{t\geq 0}$. While $\eta$ and $\eta'$ are each independent of everything else, $\sigma$ (resp.~$\sigma'$) does in fact depend on $\eta$ (resp.~$\eta'$). The goal of defining a coupling is creating suitable dependence between $(\sigma_t,\eta_t)_{t\geq 0}$ and $(\sigma_t',\eta_t')_{t\geq 0}$. To make this precise, we define the so-called \textit{graphical representation} of the process $(\sigma_t,\eta_t)_{t\geq 0}$. 

Firstly, the evolution of $\eta$ started from some starting configuration $\eta_0$ can be constructed by (i) sampling an independent Poisson processes on $[0,\infty)$ with intensity $\lambda$ for each edge, and (ii) decorating each mark with a value sampled from $\mathrm{Unif}[0,1]$ distribution. This yields a collection of random triplets $(E,T,U)\in E(\T_N^d)\times[0,\infty)\times[0,1]$, which we denote by $\upd(\eta)$. Enumerating it as $(E_i,T_i,U_i)_{i\geq 1}$ (so that $T_i\leq T_{i+1}$ for all $i\geq 1$), we construct the process by starting with $\eta_0$ and updating it at each $T_i$, where we set the value of $\eta_{T_i}(E_i)$ to $1$ if $U_i\leq p$ and to $0$ if $U_i>p$. 

Similarly, one can construct the evolution of $\sigma$ started from $\sigma_0$ by (i) sampling an independent Poisson process on $[0,\infty)$ with intensity $1$ for each site, and (ii) decorating each mark with a uniform $[0,1]$ value, which again yields a collection of triplets $(V,T,U)\in\T_N^d\times[0,\infty)\times[0,1]$ which we denote by $\upd(\sigma)$. Enumerating it as before, we start from $\sigma_0$ and update it at each $T_i$, where we set the value of $\sigma_{T_i}(V_i)$ to $s_j$, $j=1,\ldots,|S|$, if
$$\sum_{k=1}^{j-1}\mu_{\sigma_{T_i},\eta_{T_i}}^{V_i}(s_k) ~\leq~ U ~<~ \sum_{k=1}^j\mu_{\sigma_{T_i},\eta_{T_i}}^{V_i}(s_k),$$
where $\mu_{\sigma_{T_i},\eta_{T_i}}^{V_i}$ is defined in \eqref{eq:update_conditional_probability}.

Given $\sigma$, defined as above, one can construct a copy of the process $\sigma'$ which depends on $\sigma$, by simply making its corresponding update collection $\upd(\sigma')$ a function of $\upd(\sigma)$. The simplest case of that is the so-called \textit{identity coupling}, where one simply lets $\upd(\sigma')=\upd(\sigma)$. The same type of construction can be carried out for the process $\eta$. However, this coupling alone will not help us obtain $\sigma_t=\sigma_t'$ for some time $t$. To achieve this, we will employ a more involved coupling. 

To construct the coupling for a given, which we shall denote by $\Q_{f,g}$, we start by sampling $\eta_0\sim\P_p$ and $\upd(\eta)$ according to the graphical representation above, and setting $\eta_0'=\eta_0$ and $\upd(\eta')=\upd(\eta)$. This corresponds to applying the identity coupling to both the starting configuration and the evolution of the two copies of dynamical percolation, started from stationarity. Moreover, we set $\sigma_0=f(\eta_0)$ and $\sigma_0'=g(\eta_0')$. We then proceed by partitioning the time axis $[0,\infty)$ into disjoint intervals $\I_1,\I_2,\ldots$ of length $\varepsilon/\lambda$, for some appropriate $\varepsilon$ which is to be decided at a later point. To be precise, we define
$$\I_k ~:=~ [t_{k-1},t_k), \quad t_k~:=~\frac{k\varepsilon}{\lambda}.$$
Now we define the coupling. At each time stamp $t_k$, $k\geq 0$, we inspect each cluster $\C$ of configuration $\eta_{t_k}$ individually, and proceed as follows.
\begin{itemize}
    \item[(a)] If $\sigma_{t_k}(x)=\sigma_{t_k}'(x)$ for all $x\in V(\C)$, then we impose identity coupling on $\C$ until time $t_{k+1}$. That is, updates of $\sigma'$ within $\I_{k+1}=[t_k,t_{k+1})$ corresponding to sites in $\C$ will be given by the sample triplets $(V,T,U)\in\upd(\sigma)$ with $T\in\I_{k+1}$ and $V\in V(\C)$.
    \item[(b)] Otherwise, we consider two cases. We look at all edge updates $(E,T,U)\in\upd(\eta)$ with $T\in\I_{k+1}$ and $E\in E^+(\C)$:
    \begin{itemize}
        \item[(b1)]  If there are none (and hence the cluster $\C$ remains unchanged for entirety of $\I_{k+1}$), the process  
  $(\sigma_t(\C))_{t\in\I_{k+1}}$ is just Glauber dynamics on a graph $\C$, independent of the process outside $\C$. In this case, we sample $(\sigma_t(\C),\sigma_t'(\C))_{t\in\I_{k+1}}$ according to the $\varepsilon/\lambda$-optimal coupling for $\Phi^{(N)}$-Glauber dynamics on $\C$ given starting configurations $\sigma_{t_k}(\C)$ and $\sigma_{t_k}'(\C)$.
        \item[(b2)] If there are edge updates to $\C$ within $\I_{k+1}$, then we simply run identity coupling until $t_{k+1}$. 
    \end{itemize}
\end{itemize}

The verification that this indeed defines a coupling is deferred to Appendix \ref{app:coupling}, as it requires a notion defined later in this section. 

One should also note that under this coupling, if $\sigma_{t_k}(x)=\sigma_{t_k}'(x)$ for all $x\in\T_N^d$ for some $k\geq 0$, then identity coupling will be implemented on all clusters, which will result in having $\sigma_t=\sigma_t'$ for all $t\geq t_k$. 


\subsection{Renormalization: good and bad boxes}\label{subsec:boxes}

Having defined the coupling $\Q_{f,g}$ in the previous section, the goal now is to study the behavior of $\Q_{f,g}(\sigma_t\neq\sigma_t')$, for sufficiently large $t$. 

We renormalize the space-time $\T_N^d\times[0,\infty)$ into smaller space-time boxes, whose does not depend on $N$. 

We now define the space-times boxes. Let $L=L_\lambda$, such that
\begin{equation}\label{eq:L_assump}
L\to\infty\quad\text{and}\quad\frac{L}{\log(1/\lambda)}\to 0, \quad\text{as}~\lambda\downarrow 0,
\end{equation}
write $L':=8L/10$ and assume that $L'\in\N$  and that $L'\cdot M=N$ for some $M=M_\lambda\in\N$. We first partition $\T_N^d$ into $M^d$ disjoint (spatial) boxes of side length $L'$, denoted by $\Lambda_i^\core\subset\T_N^d$, $i\in\T_M^d$. More formally, given any $i=(i_1,\ldots,i_d)\in\T_M^d$, we define
$$\Lambda_i^\core ~:=~ \prod_{\ell=1}^d \{i_\ell L',\ldots,(i_\ell+1)L'-1\}.$$
Furthermore, define a (spatial) box $\Lambda_i\subset\T_N^d$ with side length $L$ as an $(L/10)$-thickening\footnote{The $r$-thickening of a set $\Lambda\subseteq\T_N^d$ is defined as the set $\{x:\dist_\infty(x,\Lambda)\leq r\}$. In particular, if $\Lambda=\{-k,\ldots,k\}^d$, then its $r$-thickening is $\{-(k+r),\ldots,k+r\}^d$.} of $\Lambda_i^\mathrm{core}$. We will refer to $\Lambda_i^\core$ as the \textit{core} of $\Lambda_i$.

Fixing a constant $C>0$ and writing
$$\II_n ~=~ \II_{n}^{\lambda,C} ~:=~ \left[\frac{(n-1)C}{\lambda},\frac{(n+1)C}{\lambda}\right), \quad n\in\N,$$
we define the space-time boxes as
$$\B_{i,n} ~:=~ \Lambda_i\times\II_n, \quad i\in\T_M^d,n\in\N.$$
Notice that $\B_{i,n}\cap\B_{i',n'}=\emptyset$ iff $\|i-i'\|_\infty> 1$ or $|n-n'|> 2$. Moreover, the second half in the temporal dimension of $\B_{i,n}$ is precisely the first half of $\B_{i,n+1}$.

To cope with the fact that given $x\in\T_N^d$, the cluster containing $x$ changes with time, we introduce a more robust notion of a cluster. Given $k\geq 1$, we say that an edge $e$ is \textit{ajar} during $\I_k$, if there exists $t\in\I_k$ such that $\eta_t(e)=1$. This induces a natural configuration $\zeta_k\in\{0,1\}^{E(\T_N^d)}$ given by
$$\zeta_k(e) ~:=~ \1\{e~\text{is ajar during}~\I_k\}, \quad e\in E(\T_N^d).$$
It is immediate that for each $t\in\I_k$, $\eta_t\leq\zeta_k$ in the pointwise sense, which in turn implies that for any choice of $x\in\T_N^d$,
$$V(\C_t(x)) ~\subseteq~ V(\C_k^\ajar(x)), \quad \forall t\in\I_k,$$
where $\C_k^\ajar$ refers to $\zeta_k$-clusters. Note that for any $y\notin V(\C_k^\ajar(x))$, there is no $t\in\I_k$ and $x'\in V(\C_k^\ajar(x))$ for which $\eta_t(x',y)=1$, so we find that the restrictions of $(\sigma_t)_{t\in[a,b]}$ to $V(\C_k^\ajar(x))$ and $\T_N^d\setminus V(\C_k^\ajar(x))$ are independent. It is not difficult to see (as demonstrated in the proof of Lemma \ref{lemma:P(box_good)} below) that by taking $\varepsilon$ sufficiently small, the law of $\zeta_k$ can be stochastically dominated by subcritical Bernoulli percolation, which again tells us that typically, during $\I_k$, any two sites that are sufficiently distant from each other evolve independently. We use this fact to limit how far any disagreement that exists at time $t_{k-1}$ can spread during $\I_k$: note that indeed, if $x\neq y$ are such that
\begin{itemize}
    \item[(i)] $y\notin V(\C_k^\ajar(x))$,
    \item[(ii)] $\sigma_{t_{k-1}}$ and $\sigma_{t_{k-1}}'$ do not agree on $\C_k^\ajar(x)$, and
    \item[(iii)] $\sigma_{t_{k-1}}$ and $\sigma_{t_{k-1}}'$ agree on $\C_k^\ajar(y)$, 
\end{itemize}
then $\sigma_t$ and $\sigma_t'$ agree on $\C_k^\ajar(y)$ for all $t\in\I_{k}$; in particular, their disagreement on $\C_k^\ajar(x)$ does not affect the agreement on $\C_k^\ajar(y)$ during $\I_{k}$. 

Another consequence of $\eta_t$-clusters typically being small is, that by selecting $\varepsilon$ small, it is likely that no edge adjacent to some typical cluster receives an update during $\I_k$. This not only yields that this cluster coincides with the associated ajar cluster, but also that the restriction of $(\sigma_t)_{t\in\I_{k}}$ to this cluster (which remains static during $\I_k$) is just classic Glauber dynamics on a fixed graph. By then taking $\lambda$ sufficiently small (in order to make $\varepsilon/\lambda$ large), we can make the probability that $\sigma$ and $\sigma'$ reach agreement on this cluster (under the optimal coupling) arbitrarily close to $1$. This is central to our proof later, once we control the sizes of clusters.

We introduce some notation for subsets of $\upd(\sigma)$ and $\upd(\eta)$, contained in a specific space-time box. First we define the first and second half of the interval $\II_n$ via $\II_n^{(1)}:=[(n-1)C/\lambda,nC/\lambda)$  and $\II_n^{(2)}:=[nC/\lambda,(n+1)C/\lambda)$. Considering a box $\B_{i,n}$, $i\in\T_M^d,n\in\N$, we write for $k=1,2$,
\begin{align*}
    \upd_{i,n}^{(k)}(\sigma) ~&:=~ \{(V,T,U)\in\upd(\sigma):(V,T)\in\Lambda_i\times\II_n^{(k)}\}, \\
    \upd_{i,n}^{(k)}(\eta) ~&:=~ \{(E,T,U)\in\upd(\eta):(E,T)\in E(\Lambda_i)\times\II_n^{(k)}\},
\end{align*}
as well as 
\begin{align*}
    \upd_{i,n}(\sigma) ~&:=~ \upd_{i,n}^{(1)}(\sigma)\cup\upd_{i,n}^{(2)}(\sigma), \\
    \upd_{i,n}(\eta) ~&:=~ \upd_{i,n}^{(1)}(\eta)\cup\upd_{i,n}^{(2)}(\eta).
\end{align*}
We wish to define a notion of a box $\B_{i,n}$ being ``good'', which is measurable with respect to the $\sigma$-algebra generated by $\upd_{i,n}(\sigma)$, $\upd_{i,n}(\sigma')$ and $\upd_{i,n}(\eta)$. Before doing so, we introduce some further notation. We write $\overline{\Lambda_i^\core}$ for the $(L/100)$-thickening of $\Lambda_i^\core$. Note that following this definition, $\Lambda_i^\core\subset\overline{\Lambda_i^\core}\subset\Lambda_i$. Moreover, we write, for $k=1,2$,
\begin{itemize}
    \item $\mathcal{S}_{i,n}^{(k)}$ for the collection of locally finite subsets of $(\T_N^d\setminus\Lambda_i)\times\II_n^{(k)}\times[0,1]$, and
    \item $\mathcal{E}_{i,n}^{(k)}$ for the collection of locally finite subsets of $(E(\T_N^d)\setminus E(\Lambda_i))\times\II_n^{(k)}\times[0,1]$,
\end{itemize}
as well as 
\begin{align*}
    \mathcal{S}_{i,n} ~&:=~ \{\Xi_1\cup\Xi_2:\Xi_k\in\mathcal{S}_{i,n}^{(k)}\}, \\
    \mathcal{E}_{i,n} ~&:=~ \{\Upsilon_1\cup\Upsilon_2:\Upsilon_k\in\mathcal{E}_{i,n}^{(k)}\}.
\end{align*}

\begin{definition}[Good boxes]\label{def:good_boxes}
Let $i\in\T_M^d$ and $n\in\N$. We say that a box $\B_{i,n}=\Lambda_i\times\II_n$ is \textit{good} if the following conditions hold.
\begin{enumerate}[label=(A\arabic*), ref=A\arabic*]
    \item\label{A1} For any $\tilde\eta\in\{0,1\}^{E(\T_N^d)}$ and $\Upsilon\in\mathcal{E}_{i,n}$, the process $(\tilde\eta_t)_{t\in\II_n}$ started from $\tilde\eta$ and evolved using $\upd_{i,n}(\eta)\cup\Upsilon$ is such that for all $k$ with $\I_k\subset\II_n^{(2)}$ and all $x\in\Lambda_i$,
    $$|\tilde\C_k^\ajar(x)| ~\leq~ \log^2 L, $$
    where $\tilde\C_k^\ajar(x)$ refers to the $\I_k$-ajar cluster of $(\tilde\eta_t)_{t\geq 0}$ containing $x$.
    \item\label{A2} For any $\tilde\sigma,\hat\sigma\in S^{\T_N^d}$, $\tilde\eta\in\{0,1\}^{E(\T_N^d)}$, $\Xi,\Xi'\in\mathcal{S}_{i,n}^{(2)}$ and $\Upsilon\in\mathcal{E}_{i,n}^{(2)}$, the processes $(\tilde\sigma_t,\tilde\eta_t)_{t\in\II_n^{(2)}}$ and $(\hat\sigma_t,\tilde\eta_t)_{t\in\II_n^{(2)}}$, started from $(\tilde\sigma,\tilde\eta)$ and $(\hat\sigma,\tilde\eta)$, respectively, and evolved using $\upd_{i,n}^{(2)}(\sigma)\cup\Xi$ and $\upd_{i,n}^{(2)}(\eta)\cup\Upsilon$ for $(\tilde\sigma_t,\tilde\eta_t)_{t\in\II_n^{(2)}}$, and $\upd_{i,n}^{(2)}(\sigma')\cup\Xi'$ and $\upd_{i,n}^{(2)}(\eta)\cup\Upsilon$ for $(\hat\sigma_t,\tilde\eta_t)_{t\in\II_n^{(2)}}$, are such that 
    $$\tilde\sigma_\frac{(n+1)C}{\lambda}(x) ~=~ \hat\sigma_\frac{(n+1)C}{\lambda}(x), \quad \forall x\in\overline{\Lambda_i^\core}.$$
    \item\label{A3} For any $\tilde\sigma,\hat\sigma\in S^{\T_N^d}$ such that $\tilde\sigma(\overline{\Lambda_i^\core})=\hat\sigma(\overline{\Lambda_i^\core})$, $\tilde\eta\in\{0,1\}^{E(\T_N^d)}$, $\Xi,\Xi'\in\mathcal{S}_{i,n}^{(2)}$ and $\Upsilon\in\mathcal{E}_{i,n}^{(2)}$, the processes $(\tilde\sigma_t,\tilde\eta_t)_{t\in\II_n^{(2)}}$ and $(\hat\sigma_t,\tilde\eta_t)_{t\in\II_n^{(2)}}$ defined as in Condition (\ref{A2}) are such that for all $t\in\II_n^{(2)}$,
    $$\tilde\sigma_t(x) ~=~ \hat\sigma_t(x), \quad \forall x\in\Lambda_i^\core.$$
\end{enumerate}
If any of those conditions fail, we say that $\B_{i,n}$ is a \textit{bad} box.
\end{definition}

Note also, that by the nature of the events defined in Definition \ref{def:good_boxes}, their probabilities do not depend on the particular choice of $f,g$, and hence all the bounds will be uniform in $f,g$. Thus, for the rest of the section, we will suppress the subscript and simply write $\Q$ in place of $\Q_{f,g}$.

\begin{lemma}\label{lemma:P(box_good)}
    For an appropriate choice of $L= L_\lambda$ as in \eqref{eq:L_assump} and setting $\varepsilon=\log^{-5}L$, there exist $C_*,\lambda_0,c>0$ so that for any $(i,n)\in\T_M^d\times\N$, $C\geq C_\star$ and $\lambda\leq\lambda_0$,
    $$\Q\big(\B_{i,n}~\mathrm{is~good}\big) ~\geq~ 1-e^{-c\log^2 L}.$$
\end{lemma}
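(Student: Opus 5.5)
The plan is to bound the probability that each of (\ref{A1}), (\ref{A2}), (\ref{A3}) fails by $e^{-c'\log^2 L}$ and then take a union bound. The parameters are chosen in the following order. First take $C_\star$ large and $\lambda_0$ small enough that $p':=p+e^{-C}+2\varepsilon<p_c(d)$. Then choose $L=L_\lambda\to\infty$ slowly enough (compatible with \eqref{eq:L_assump}) that the interval length $\varepsilon/\lambda=\lambda^{-1}\log^{-5}L$ exceeds $h_{\kappa_L}(\log^2 L)$, where $\kappa_L:=\log^{-1}L$. This is possible because $h_\kappa(a)<\infty$ for every fixed $\kappa,a$, and $\lambda^{-1}\gg\log^{5}L$ under \eqref{eq:L_assump}. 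With this choice, Glauber dynamics on any connected graph with at most $\log^2L$ vertices, run under the $\varepsilon/\lambda$-optimal coupling, fails to coalesce with probability at most $\log^{-1}L$.

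For (\ref{A1}), fix an interval $\I_k\subset\II_n^{(2)}$ and an edge $e\in E(\Lambda_i)$. I would show that, whatever the initial $\tilde\eta$, the edge $e$ is ajar during $\I_k$ only if one of three things happens: it received no update in $[(n-1)C/\lambda,t_{k-1})$ (probability at most $e^{-C}$), its last update before $t_{k-1}$ set it open (probability $p$), or it received an update during $\I_k$ (probability at most $\varepsilon$). These events are determined by the independent Poisson updates of distinct edges, so the restriction of $\zeta_k$ to $E(\Lambda_i)$ is stochastically dominated by Bernoulli$(p')$ bond percolation. The updates in $\Upsilon$ only touch edges outside $\Lambda_i$. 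I would therefore read $\tilde\C_k^\ajar(x)$ as the cluster computed with the edges of $\Lambda_i$, since the statement requires measurability with respect to the box's own updates. Exponential decay of subcritical cluster sizes (Menshikov / Aizenman--Barsky) then gives $\P(|\tilde\C_k^\ajar(x)|>\log^2L)\le e^{-c\log^2 L}$. A union bound over the $L^d$ sites and the $C\log^5L$ intervals still leaves $e^{-c'\log^2L}$.

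For (\ref{A2}) and (\ref{A3}), I would argue on the event (\ref{A1}) by tracing disagreements backwards in time. Suppose $y$ disagrees at some time in $\I_k$. Since the two restrictions to an ajar cluster evolve independently of the rest of the configuration, some $z\in\C_k^\ajar(y)$ must disagree at $t_{k-1}$. Moreover, the $\eta_{t_{k-1}}$-cluster of $z$ must either fall in case (b2), which has probability at most $2d\log^2L\cdot\varepsilon=2d\log^{-3}L$, or fall in case (b1) with the optimal coupling failing, which has probability at most $\log^{-1}L$ (this second option only matters for the final agreement in (\ref{A2})). Call such a cluster a \emph{failure}. Iterating, a disagreement at $(x,t)$ forces a backward chain of sites $x=y_k,y_{k-1},\dots$, one per interval, with consecutive sites at distance at most $\log^2L$, each lying in a failed cluster in its interval. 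For a fixed such chain, the failure events in distinct intervals depend on disjoint sets of Poisson updates (the b1 failures are conditionally independent given the environment). There are at most $\log^2L$ choices of predecessor per step. Hence a chain of length $m$ has probability at most $(C'\log^2L\cdot\log^{-1}L\cdot\log^{-1}L)^m$; more carefully, I would count the b2 failures at $\log^{-3}L$ against the $\log^2L$ factor and let only the last step use b1. This gives at most $(C''\log^{-1}L)^{m}$.

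For (\ref{A2}), take $m=\log^2L$. Such a chain stays within distance $\log^4L<L/100$ of $\overline{\Lambda_i^\core}$, so the adversarial data $\tilde\sigma,\hat\sigma,\Xi,\Xi'$ never enter. Every site disagreeing at the final time would need such a chain, so the failure probability is at most $L^d(C''\log^{-1}L)^{\log^2L}\le e^{-c\log^2L}$. For (\ref{A3}), the two configurations agree on $\overline{\Lambda_i^\core}$ at time $nC/\lambda$. A disagreement reaching $\Lambda_i^\core$ must therefore come through a chain of b2-type failures that travels distance $L/100$, which needs at least $L/(100\log^2L)\ge\log^2L$ steps; a union over $L^d$ sites and $C\log^5L$ times then suffices. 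The main obstacle I expect is making this backward-chain/independence argument rigorous in the presence of the adversarial $\Xi,\Xi',\Upsilon$ and of the dependence of the coupling choice (a)/(b) on the current disagreements. I would first handle this by defining failures purely through the box's own updates (edge-update counts and the optimal-coupling randomness), so that they dominate the actual events uniformly over the adversarial inputs.
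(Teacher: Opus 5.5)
The gap is in (\ref{A2}): your backward-chain count does not close as written. Your (\ref{A1}) argument is the paper's: product domination of $\zeta_k$ on $E(\Lambda_i)$, exponential decay of subcritical clusters, and a union bound over $L^d$ sites and $C\log^5L$ intervals. Reading clusters inside $E(\Lambda_i)$ is what the paper does tacitly. For (\ref{A3}) your chain argument needs no probability at all. On (\ref{A1}) a disagreement moves by less than $\log^2L$ per interval, and $\II_n^{(2)}$ contains only $C\log^5L$ intervals. So once $C\log^7L<L/100$ (for $L$ large depending on $C$, as the paper's estimates also require), no chain can cross the annulus of width $L/100$.

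Each backward step of your chain is a disagreement surviving an interval end $t_j$, and it can survive by a (b1) failure at every step. Think of a single static cluster on which the optimal coupling fails in $m$ consecutive intervals. So restricting (b1) to only the last step is not justified. With $\kappa_L=\log^{-1}L$ and $\log^2L$ predecessor sites per step, a (b1) step costs a factor of order $\log^2L\cdot\log^{-1}L=\log L$, and the bound $(C''\log^{-1}L)^m$ does not follow. The repair is cheap. Take $\kappa_L\le\log^{-3}L$, which is the paper's requirement $\delta(L,\varepsilon/\lambda)\le d\log^{-3}L$ and costs nothing because $L_\lambda$ may grow arbitrarily slowly; then every step costs $O(\log^{-1}L)$. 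Alternatively, count disagreeing $\eta_{t_j}$-clusters rather than sites, so that a (b1) step has a single continuation.

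The union bound over chains also has to be organised differently. If chains are deterministic, a predecessor ranges over a ball of radius $\log^2L$, i.e.\ about $\log^{2d}L$ choices, and $\log^{2d}L\cdot\log^{-3}L$ is not small for $d\ge2$. If instead the predecessor lies in the actual ajar cluster $\C_j^\ajar(y_j)$, the chain is random. That cluster is determined by the same edge updates in $\I_j$ that decide the (b2) event, so ``the probability of a fixed chain'' is not the right quantity, and the independence you invoke is unavailable.

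What works is a forward conditional first-moment bound. Given $\F_{t_{j-1}}$ and before the stopping time $\tau$, every ajar cluster has at most $\log^2L$ sites deterministically. Each disagreeing $\eta_{t_{j-1}}$-cluster fails with conditional probability at most $2d\log^{-3}L+\delta$. Hence the expected number of disagreeing sites in a suitably shrinking neighbourhood of $\overline{\Lambda_i^\core}$ contracts by a factor $O(\log^{-1}L)$ per interval. This is exactly Claim~\ref{almost_martingale_bound}, where the paper encodes your shrinking neighbourhoods through the weights $\alpha^{\dist(x,\Delta)}$ with $\alpha^{-\log^2L}=e$. With that estimate, your localisation to the last $\log^2L$ intervals yields (\ref{A2}) and avoids the boundary term $2dL^{d-1}\alpha^{\dist(\partial\widetilde\Lambda_i,\Delta)}$. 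The core estimate, however, is the paper's.
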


\begin{remark}
    We briefly explain the role of each condition in Definition \ref{def:good_boxes}. Condition (\ref{A1}) imposes a uniform bound on the range of interaction of any site during any time interval $\I_k$. The choice of $\log^2 L$ as the bound is fairly arbitrary -- what is important is that the bound is of order higher than $\log L$. Condition (\ref{A2}) ensures that at the end of the time of the box, $\sigma$ and $\sigma'$ agree on the entire $\Lambda_i^\core$ as well as all the clusters intersecting $\Lambda_i^\core$, assuming $L$ is sufficiently large. The role of Condition (\ref{A3}) is to ensure that if the box $\B_{i,n-1}$ was good, which implies that $\sigma$ and $\sigma'$ agree on $\overline{\Lambda_i^\core}$ at the starting time of $\II_n^{(2)}$, then $\sigma$ and $\sigma'$ agree on $\Lambda_i^\core$ throughout $\II_n^{(2)}$.
\end{remark}

\begin{proof}[Proof of Lemma \ref{lemma:P(box_good)}]
    Pointing out the obvious fact that 
    $$\text{(\ref{A1})}^c\cup\text{(\ref{A2})}^c\cup\text{(\ref{A3})}^c ~=~ \text{(\ref{A1})}^c\cup\big(\text{(\ref{A1})}\cap\text{(\ref{A2})}^c\big)\cup\big(\text{(\ref{A1})}\cap\text{(\ref{A3})}^c\big),$$
    and reminding the reader that $L\to \infty$ as $\lambda$ approaches $0$, we can divide the proof into showing that
    \begin{itemize}
        \item[(I)] there exist $C_\star,c>0$ so that for any $C\geq C_\star$ and $L$ large, 
        \begin{equation}\label{eq:bound_A1}
        \Q\big((\mathrm{\ref{A1}})^c\big) ~\leq~ \frac{1}{3}e^{-c\log^2 L};
        \end{equation}
        \item[(II)] letting $C_\star,c>0$ be as in (I), for $C\geq C_\star$ and $L$ large,
        \begin{equation}\label{eq:bound_A2}
        \Q\big((\text{\ref{A1}})\cap(\text{\ref{A2}})^c\big) ~\leq~ \frac{1}{3}e^{-c\log^2 L};
        \end{equation}
        \item[(III)] letting $C_\star,c>0$ be as in (I), for $C\geq C_\star$ and $L$ large,
        \begin{equation}\label{eq:bound_A3}
        \Q\big((\text{\ref{A1}})\cap(\text{\ref{A3}})^c\big) ~\leq~ \frac{1}{3}e^{-c\log^2 L}.
        \end{equation}
    \end{itemize}
    Without loss of generality, we assume that $C/\varepsilon\in\N$ and consider the case $n=0$. Note that $\II_0$ includes negative times, but this does not impose any problem for the proof. We take the case $n=0$ so that $\II_n^{(2)}$ is a union of $\I_1,\ldots,\I_{C/\varepsilon}$, which avoids the notational complications of having to enumerate $\{k:\I_k\subset\II_n^{(2)}\}$. 

To prove Statement (I), we begin by pointing out that
$$(\text{\ref{A1}})^c ~=~ \bigcup_{k=1}^{C/\varepsilon}\bigcup_{x\in\Lambda_i}\{|\C_k^\ajar(x)|>\log^2 L\}.$$
Thus, the main objective is to provide an appropriate bound on $\Q(|\C_k^\ajar(x)|>\log^2 L)$. To that end, we have for each $k=1,\ldots,C/\varepsilon$ and $e\in E(\Lambda_i)$ the following inclusion:
$$\{\zeta_k(e)=1\} ~\subseteq~ \{\eta_{t_{k-1}}(e)=1\}\cup\{e~\text{refreshes during}~\I_k\}.$$
Since each $\I_k$ has length $\varepsilon/\lambda$,
$$\Q(e~\text{refreshes during}~\I_k) ~=~ 1-e^{-\varepsilon} ~\leq~ \varepsilon.$$
Moreover, for any $t^*\in[0,C)$, we have that
\begin{align*}
    \big\{\eta_\frac{t^*}{\lambda}(e)=1\big\} ~\subseteq~ \{e~\text{refreshes during}~[-C/\lambda,t^*/\lambda),~\text{last refresh to}~1\}\\
    \cup\{e~\text{doesn't refresh during}~[-C/\lambda,t^*/\lambda)\}
\end{align*}
and hence
\begin{align*}
    \Q\big(\eta_\frac{t^*}{\lambda}(e)=1\big) ~&\leq~ (1-e^{-(C+t^*)})p+e^{-(C+t^*)} \\
    &\leq~ (1-e^{-2C})p+e^{-C},
\end{align*}
which yields that $\zeta_k$ is stochastically dominated by a product of $\mathrm{Ber}(\tilde{p})$ measures, where $\tilde{p}:=(1-e^{-2C})p+e^{-C}+\varepsilon$, noting that $\tilde{p}$ approaches $p+\varepsilon$ as $C\to\infty$. Thus, by choosing $L$ sufficiently large, we can achieve that $\varepsilon<\frac{1}{2}(p_c-p)$, so there exists $C_*$ so that for any $C\geq C_*$, the law of each $\zeta_k$ is stochastically dominated by Bernoulli percolation with density $\frac{1}{2}(p_c+p)$. It follows (see for example Theorem 5.6 in \cite{Georgii_Haggstrom_Maes_1999}) that there exists $D=D(d,p)$, so that
$$\Q(|\C_k^\ajar(x)|>\log^2 L) ~\leq~ e^{-D\log^2 L},$$
since the event $\{|\C(x)|>\log^2 L\}$ is increasing. Taking a union bound, we obtain that
\begin{align*}
     \Q\big((\mathrm{\ref{A1}})^c\big) ~&\leq~ \frac{C}{\varepsilon}L^de^{-D\log^2 L} \\
     &=~ e^{\log C+5\log\log L+d\log L-D\log^2 L}.
\end{align*}
It is thus sufficient to argue that there exists $c>0$, such that for $L$ large enough, 
$$\log C+5\log\log L+d\log L-D\log^2 L ~\leq~ -\log 3-c\log L^2;$$
writing $z:=\log L$, this is equivalent to requiring that for large enough $z$ we have
$$(D-c)z^2-dz-5\log z-\log(3C) ~\geq~ 0,$$
which indeed holds as long as $c<D$, noting that $D$ only depends on $d$ and $p$.

Before going on to prove Statement (II) and Statement (III), we introduce some tools that will be helpful in doing this. First we define the \textit{disagreement function} between configurations. Writing $\widetilde\Lambda_i$ for the $L/100$-thinning of $\Lambda_i$ (that is, $\Lambda_i$ is a $(L/100)$-thickening of $\widetilde\Lambda_i$), given $\Delta\subset\widetilde\Lambda_i$, we define a function $\D_\Delta:S^{\Lambda_i}\times S^{\Lambda_i}\to[0,\infty)$ as 
$$\D_\Delta(\sigma,\sigma') ~:=~ \sum_{x\in\widetilde\Lambda_i}\alpha^{\dist(x,\Delta)}\1_{\{\sigma(x)\neq\sigma'(x)\}}, \quad \sigma,\sigma'\in S^{\Lambda_i},$$
where $\alpha=\alpha_L:=e^{-1/\log^2 L}$. It is immediate that $\D_\Delta(\sigma,\sigma')<1$ implies that $\sigma(x)=\sigma'(x)$ for all $x\in\Delta$. Moreover, we define the stopping time
$$\tau ~:=~ \min\Big\{k\in\N:\max_{x\in\Lambda_i}|\C_k^\ajar(x)|>\log ^2 L\Big\},$$
noting that $\tau>C/\varepsilon$ implies that Condition (\ref{A1}) holds. To prove Statement (II) and Statement (III), we will utilize the following claim, the proof of which is provided after the end of this proof. 
\begin{claim}\label{almost_martingale_bound}
    Fix $\Delta\subset\widetilde\Lambda_i$, let $\varepsilon:=\log^{-5} L$ and define a stochastic process $(Y_k)_{k=1}^{C/\varepsilon}$ via
    $$Y_k ~:=~ \1_{\{\tau>k\}}\D_\Delta(\sigma_{t_k},\sigma_{t_k}').$$
    Moreover, let $\F_k$ denote the $\sigma$-algebra generated by $(V,T,U)\in\upd_{i,0}^{(2)}(\sigma)\cup\upd_{i,0}^{(2)}(\sigma')$ and $(E,T,U)\in\upd_{i,0}(\eta)$ with $T\leq t_k$. Then, for an appropriate choice of $L=L_\lambda$ satisfying \eqref{eq:L_assump}, writing $\EE$ of expectation under $\Q$,
    \begin{equation}\label{eq:claim_bound}
    \EE[Y_{k+1}|\F_k] ~\leq~ (3de\log^{-1}L) Y_k+2dL^{d-1}\alpha^{\dist(\partial\widetilde{\Lambda}_i,\Delta)}, \quad k=0,\ldots,\frac{C}{\varepsilon}-1,
    \end{equation}
    provided $\lambda$ is sufficiently small.
\end{claim}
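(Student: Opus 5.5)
On $\{\tau\le k\}$ we have $Y_{k+1}=0$, so it suffices to work on $\{\tau>k\}$. I will bound $Y_{k+1}\le \D_\Delta(\sigma_{t_{k+1}},\sigma'_{t_{k+1}})\1_{\{\tau>k+1\}}$ and write it as a sum over sites $x\in\widetilde\Lambda_i$. Each disagreement at time $t_{k+1}$ at a site $x$ must come from one of two sources:
\begin{itemize}
\item[(a)] a disagreement present at time $t_k$ somewhere in $\C_{k+1}^\ajar(x)$ which the coupling failed to remove during $\I_{k+1}$;
\item[(b)] a disagreement entering from outside $\Lambda_i$. This requires $\C_{k+1}^\ajar(x)$ to reach the boundary of $\Lambda_i$, where spins are uncontrolled.
\end{itemize}

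The first step is a worst-case spreading estimate. On $\{\tau>k+1\}$ every ajar cluster in $\Lambda_i$ has size at most $\log^2 L$, hence diameter at most $\log^2 L$. By the locality property (i)–(iii) stated before Definition \ref{def:good_boxes}, $\sigma_t$ and $\sigma'_t$ agree on $\C_{k+1}^\ajar(x)$ throughout $\I_{k+1}$ whenever they agree on it at time $t_k$. So (a) needs some $y\in\C_{k+1}^\ajar(x)$ with $\sigma_{t_k}(y)\ne\sigma'_{t_k}(y)$. For such $y$,
\[
\alpha^{\dist(x,\Delta)}\le \alpha^{\dist(y,\Delta)-\log^2L}=e\,\alpha^{\dist(y,\Delta)}.
\]
This is exactly where the factor $e$ and the choice $\alpha=e^{-1/\log^2L}$ enter.

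Summing over $x$, each disagreeing $y$ contributes at most $e\,\alpha^{\dist(y,\Delta)}\,\EE[\,|\{x:\ y\in\C^\ajar_{k+1}(x),\ \text{bad event}\}|\,\mid\F_k]$. Here the bad event is that the ajar cluster $\C^\ajar_{k+1}(y)\supseteq \C_{t_k}(y)$ either is not static during $\I_{k+1}$ (case (b2) of the coupling) or is static but the $\varepsilon/\lambda$-optimal coupling fails (case (b1)). The boundary term is handled the same way. A site $x$ whose ajar cluster reaches $\partial\widetilde\Lambda_i$ (it does so within distance $\log^2L\ll L/100$) is charged at most $e\,\alpha^{\dist(\partial\widetilde\Lambda_i,\Delta)}$. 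The number of such boundary-adjacent sites is at most $2dL^{d-1}$ times a constant, and I expect to absorb that constant into the factor $2$.

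The main step is bounding the conditional expected number of disagreements that persist, to get the multiplier $3de\log^{-1}L$. Given $\F_k$, the cluster $\C_{t_k}(y)$ is known. Its edge boundary has at most $2d\log^2L$ edges, each refreshing during $\I_{k+1}$ with probability at most $\varepsilon=\log^{-5}L$. So case (b2), and also the ajar cluster growing, has probability at most $2d\log^{-3}L$. In that case the affected set has size at most $\log^2 L$, giving a contribution of order $2d\log^{-1}L$.

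In case (b1) the cluster is fixed and has at most $\log^2L$ vertices. Its dynamics is $\Phi^{(N)}$-Glauber on a connected graph of that size, so the optimal coupling fails with probability at most $\kappa'$ once $\varepsilon/\lambda\ge h_{\kappa'}(\log^2L)\,\lceil\log_2(1/\kappa')\rceil$. I will choose $L=L_\lambda$ growing slowly enough that this holds with $\kappa'=\log^{-3}L$, using only $h_\kappa(a)<\infty$ (the finiteness assumption on $h$). This is the delicate point: $L_\lambda$ must satisfy \eqref{eq:L_assump} and also $\lambda\, h_{\log^{-3}L}(\log^2L)\log^5L\log\log L\to0$. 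Such a choice is possible because $h$ is a fixed finite function, so $L_\lambda$ can be taken to tend to infinity as slowly as needed. The failure probability contributes another $\le\log^{-1}L$ after multiplying by the cluster size.

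Adding the two cases gives at most $(2d+1)\log^{-1}L\le 3d\log^{-1}L$ expected persistent disagreement mass per unit of $\alpha^{\dist(y,\Delta)}$. Multiplying by $e$ and summing over disagreeing $y$ yields $(3de\log^{-1}L)Y_k$, which together with the boundary term gives \eqref{eq:claim_bound}. The bookkeeping of double-counting (one $y$ lying in several ajar clusters) is handled because ajar clusters partition vertices.
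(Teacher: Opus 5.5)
Your proposal follows the paper's proof essentially step for step. You charge each disagreement at time $t_{k+1}$ to a disagreeing site at time $t_k$ in the same $\I_{k+1}$-ajar cluster, paying a factor $\alpha^{-\log^2 L}\log^2 L=e\log^2 L$. You bound the non-static case by $2d\log^{-3}L$ and the optimal-coupling failure by $O(\log^{-3}L)$, using $h<\infty$ and a slowly growing $L_\lambda$. A worst-case additive boundary term then gives $3de\log^{-1}L$.

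The one inaccuracy is your expectation that the boundary constant can be absorbed into the factor $2$. A worst-case count of affected sites within distance $\log^2 L$ of $\partial\widetilde\Lambda_i$ gives an extra factor of order $e\log^2 L$, which is not a constant. The paper's own term $\sum_{x\in\partial\widetilde\Lambda_i}\alpha^{\dist(x,\Delta)}$ is equally loose at this point. The loss is harmless downstream, since $\alpha^{8L/100}=e^{-8L/(100\log^2 L)}$ swamps any polylogarithmic factor.
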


To prove Statement (II), we first define a stochastic process
    $$N_k ~:=~ \1_{\{\tau>k\}}\D_{\overline{\Lambda_i^\core}}(\sigma_{t_k},\sigma_{t_k}'), \quad k=0,\ldots,\frac{C}{\varepsilon}.$$
    Claim \ref{almost_martingale_bound} tells us that for each $k=1,\ldots,C/\varepsilon$, 
    $$\EE[N_k|\F_{k-1}] ~\leq~ \gamma_L N_{k-1}+\widetilde{C}_{L,\overline{\Lambda_i^\core}},$$
    where $\gamma_L:=3de\log^{-1}L$ and $\widetilde{C}_{L,\overline{\Lambda_i^\core}}:=2dL^{d-1}\alpha^{\dist(\partial\widetilde{\Lambda}_i,\overline{\Lambda_i^\core})}$. Using that $\dist(\partial\widetilde{\Lambda}_i,\overline{\Lambda_i^\core})=L/10-2\cdot(L/100)=8L/100$, we see that
    $$\widetilde{C}_{L,\overline{\Lambda_i^\core}} ~=~ 2dL^{d-1}\alpha^\frac{8L}{100} ~=~ 2de^{(d-1)\log L-\frac{8L}{100\log^2 L}}.$$
     Noticing that $N_\frac{C}{\varepsilon}<1$ implies that either $\D_{\overline{\Lambda_i^\core}}(\sigma_\frac{(n+1)C}{\lambda},\sigma_\frac{(n+1)C}{\lambda}')<1$ (which implies that $\sigma_\frac{(n+1)C}{\lambda}$ and $\sigma_\frac{(n+1)C}{\lambda}'$ agree on $\overline{\Lambda_i^\core}$, i.e., Condition (\ref{A2}) holds) or that $\tau<k$ (which implies that Condition (\ref{A1}) fails), we find that
    $$\Q\big((\text{\ref{A1}})\cap(\text{\ref{A2}})^c\big) ~\leq~ \Q(N_\frac{C}{\varepsilon}\geq 1) ~\leq~ \EE[N_\frac{C}{\varepsilon}],$$
    where the second inequality is a simple application of Markov inequality. Using the relation $\EE[N_{k+1}|\F_k]\leq\gamma_L N_k+\widetilde{C}_{L,\overline{\Lambda_i^\core}}$, which we obtained above, we can further see that 
    $$\EE[N_k] ~\leq~ \gamma_L^k\EE[N_0]+\sum_{j=0}^{k-1}\gamma_L^j\widetilde{C}_{L,\overline{\Lambda_i^\core}} ~\leq~ \gamma_L^k L^d+\widetilde{C}_{L,\overline{\Lambda_i^\core}}\frac{1-\gamma_L^k}{1-\gamma_L}.$$
    Using that $\frac{1-\gamma_L^k}{1-\gamma_L}\leq\frac{1}{1-\gamma_L}$, we can assume that $L$ is sufficiently large, so that $\frac{1}{1-\gamma_L}\leq2$.\footnote{The choice of $2$ here is completely arbitrary, any fixed real number strictly larger $1$ works.} This yields, that
    $$\tilde{\EE}[N_\frac{C}{\varepsilon}] ~\leq~ \gamma_L^\frac{C}{\varepsilon}L^d+2\widetilde{C}_{L,\overline{\Lambda_i^\core}},$$
    and hence
    $$\Q\big((\text{\ref{A1}})\cap(\text{\ref{A2}})^c\big) ~\leq~ e^{C\log^5 L(\log(3de)-\log\log L)+d\log L}+e^{\log(4d)+(d-1)\log L-\frac{8L}{100\log^2 L}}.$$
    It is sufficient to argue that for large $L$,
   \begin{align*}
        C\log^5 L(\log(3de)-\log\log L)+d\log L ~&\leq~ -\log 6-c\log^2 L, \\
        \log(4d)+(d-1)\log L-\frac{8L}{100\log^2 L} ~&\leq~ -\log 6-c\log^2 L, 
   \end{align*}
    or equivalenlty, writing $z=\log L$, that for large $z$,
    \begin{align*}
        Cz^5\log z-C\log(3de)z^5-cz^2-dz-\log 6 ~&\geq~ 0, \\
        \frac{2}{25}\frac{e^z}{z^2}-cz^2 -(d-1)z-\log(24d) ~&\geq~ 0,
    \end{align*}
    which indeed holds.

    Lastly, to prove Statement (III), we define a stochastic process
    \begin{align*}
    M_k ~&:=~ \1_{\{\tau>k\}}\D_{\Lambda_i^\core}\big(\sigma_{t_k},\sigma_{t_k}'\big), \quad k=0,\ldots,\frac{C}{\varepsilon}.
    \end{align*}
    Claim \ref{almost_martingale_bound} again tells us that 
    $$\EE[M_{k+1}|\F_k]~\leq~\gamma_L M_k+\widetilde{C}_{L,\Lambda_i^\core},$$ where $\gamma_L$ is as above and $\widetilde{C}_{L,\Lambda_i^\core}:=2dL^{d-1}\alpha^{\dist(\partial\widetilde{\Lambda}_i,\overline{\Lambda_i^\core})}$. Since $\Lambda_i^\core\subset\overline{\Lambda_i^\core}$ (and hence $\dist(\partial\widetilde{\Lambda}_i,\Lambda_i^\core)\geq\dist(\partial\widetilde{\Lambda}_i,\overline{\Lambda_i^\core})$) it is immediate that $\widetilde{C}=\widetilde{C}_{L,\Lambda_i^\core}\to 0$ as $L\to\infty$.
    Note that by definition, $M_k\leq N_k$. Moreover, having imposed the assumption that $\sigma_{t_0}$ and $\sigma_{t_0}'$ agree on the entirety of $\overline{\Lambda_i^\core}$ additionally tells us that
    $$M_0 ~\leq~ \alpha^\frac{L}{100}N_0 ~<~ \alpha^\frac{L}{100}L^d ~=:~ \alpha',$$
    which follows immediately from the fact that for each $x\in\widetilde{\Lambda}_i\setminus\overline{\Lambda_i^\core}$, $\dist(x,\Lambda_i^\core)=\dist(x,\overline{\Lambda_i^\core})+L/100$. Recalling that $\alpha=e^{-1/\log^2 L}$, it is clear that this bound tells us that for sufficiently large $L$, $M_0\ll 1$; since we are taking $\lambda$ small (and hence $L$ large) we may assume that $L$ is such that $\sqrt{\alpha'}< 1/e=\alpha^{\log^2 L}$. It is immediate, by definition of the process, that if $M_1,\ldots,M_\frac{C}{\varepsilon}<\sqrt{\alpha'}$, either (i) $\sigma$ and $\sigma'$ agree on the entirety of the $(\log^2 L)$-thickening of $\Lambda_i^\core$ at times $t_1,\ldots,t_{C/\varepsilon}$, or (ii) Condition (\ref{A1}) fails, i.e., $\tau<C/\varepsilon$. In other words, writing $\A$ for the event in scenario (i), we have that
    $$\{M_1,\ldots,M_\frac{C}{\varepsilon}<\sqrt{\alpha'}\} ~\subseteq~ (\text{\ref{A1}})^c\cup\A.$$
    Note also, that if $\sigma_{t_{k-1}}$ and $\sigma_{t_{k-1}}'$ agree on $(\log^2 L)$-thickening of $\Lambda_i^\core$, and the same holds for $\sigma_{t_k}$ and $\sigma_{t_k}'$, then $\sigma_t$ and $\sigma_t'$ agree on $\Lambda_i^\core$ for all $t\in(t_{k-1},t_{k})$, as soon as all $\I_{k}$-ajar clusters are of size at most $\log^2 L$ (which tells us that disagreement from outside $(\log^2 L)$-thickening of $\Lambda_i^\core$ cannot enter $\Lambda_i^\core$). It follows that
    $$(\text{\ref{A1}})\cap\A ~\subseteq~ (\text{\ref{A1}})\cap(\text{\ref{A3}}).$$
    We can now write 
    \begin{align*}
        (\text{\ref{A1}})\cap(\text{\ref{A3}})^c  ~&=~ \Big((\text{\ref{A1}})\cap\A^c\cap(\text{\ref{A3}})^c\Big)\cup\Big((\text{\ref{A1}})\cap\A\cap(\text{\ref{A3}})^c\Big) \\
        &\subseteq~ \Big((\text{\ref{A1}})\cap\A^c\Big)\cup\Big((\text{\ref{A1}})\cap\underbrace{(\text{\ref{A3}})\cap(\text{\ref{A3}})^c}_{\emptyset}\Big) \\
        &\subseteq~ \{M_1,\ldots,M_\frac{C}{\varepsilon}<\sqrt{\alpha'}\}^c \\
        &=~ \bigcup_{k=1}^{C/\varepsilon}\{M_k\geq\sqrt{\alpha'}\},
    \end{align*}
    from which it follows that 
    $$\Q\big((\text{\ref{A1}})\cap(\text{\ref{A3}})^c\big) ~\leq~ \sum_{k=1}^{C/\varepsilon}\Q(M_k\geq\sqrt{\alpha'}) ~\leq~ \frac{1}{\sqrt{\alpha'}}\sum_{k=1}^{C/\varepsilon}\EE[M_k].$$
    Recalling that $\EE[M_{k+1}|\F_k]\leq\gamma\EE[M_k]+\widetilde{C}_{\Lambda_i^\core}$ (suppressing the dependence of the constants on $L$), it follows that 
    \begin{align*}
        \EE[M_k] ~&\leq~ \gamma^k\EE[M_0]+\frac{1-\gamma^k}{1-\gamma}\widetilde{C}_{\Lambda_i^\core} \\
        &\leq~ \gamma^k\alpha' + \frac{1-\gamma^k}{1-\gamma}2dL^{d-1}\alpha^{\dist(\partial\widetilde{\Lambda}_i,\Lambda_i^\core)} \\
        &\leq~ \gamma^k\alpha'+\frac{1-\gamma^k}{1-\gamma}\frac{2d}{L}\alpha^{\dist(\partial\widetilde{\Lambda}_i,\Lambda_i^\core)}\alpha' \\
        &<~ \alpha'
    \end{align*}
    for all $L$ large enough and hence finally
    \begin{align*}
    \Q\big((\text{\ref{A1}})\cap(\text{\ref{A3}})^c\big) ~&\leq~ \frac{C}{\varepsilon}\sqrt{\alpha'} \\
    &=~ e^{\log C+5\log\log L+\frac{1}{2}(d\log L-\frac{L}{100\log^2 L})}.
    \end{align*}
    It is sufficient to argue that for large enough $L$, we have
    $$\log C+5\log\log L+\frac{d}{2}\log L-\frac{L}{200\log^2 L} ~\leq~ -\log 3-c\log^2 L,$$
    or equivalently, writing $z=\log L$, that for $z$ large,
    $$\frac{1}{200}\frac{e^z}{z}-cz^2-\frac{d}{2}z-5\log z-\log(3C) ~\geq~ 0,$$
    which indeed holds.
\end{proof}

Before proving Claim \ref{almost_martingale_bound}, we briefly provide another claim that we use will use in the proof. Let $F$ be a connected subgraph of $\T_N^d$, and let $\PP_{\opt,F}^T$ denote the $T$-optimal coupling for $\Phi^{(N)}$-Glauber dynamics on $F$. Letting
$$\delta_F^{(N)}(T) ~:=~ \max_{\sigma_0,\sigma_0'}\PP_{\opt,F}^T(\sigma_T\neq\sigma_T'),$$
where $(\sigma_t)_{t\geq 0},(\sigma_t')_{t\geq 0}$ are two copies of the process, started from $\sigma_0,\sigma_0'\in S^{V(F)}$, we define
$$\delta(a,T) ~:=~ \sup_{N\geq 1}\max_{F:|V(F)|\leq\log^2 a}\delta_F^{(N)}(T),$$
where the maximum is over connected subgraphs of $\T_N^d$. Given this definition, on any cluster of size at most $\log^2 L$, where $\varepsilon/\lambda$-optimal coupling is ran during an interval $\I_k$, the copies of the process will agree at the end of $\I_k$ with probability at least $1-\delta(L,\varepsilon/\lambda)$.

\begin{claim}\label{cl:delta_to_0}
    Let $a\in\N$. Then, for any $c\in(0,1)$,
    $$\delta(a,T) ~\leq~ c, \quad \forall T\geq h_{c/2}(\log^2 a),$$
    where $h_{c/2}(\cdot)$ is defined as in \eqref{eq:def_h}.
\end{claim}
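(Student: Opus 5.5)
The plan is to reduce the claim to the standard relation between total variation mixing and optimal couplings, through the triangle inequality with respect to the stationary measure. Fix $N\geq1$ and a connected subgraph $F$ of $\T_N^d$ with $|V(F)|\leq\log^2 a$. Write $\tilde\pi_F$ for the stationary measure of $\Phi^{(N)}$-Glauber dynamics on $F$, and $\tilde\Prm^T_{\sigma_0}$ for the law at time $T$ started from $\sigma_0$. By definition of the $T$-optimal coupling, for any $\sigma_0,\sigma_0'$,
$$\PP_{\opt,F}^T(\sigma_T\neq\sigma_T') ~=~ \|\tilde\Prm^T_{\sigma_0}-\tilde\Prm^T_{\sigma_0'}\|_\TV ~\leq~ \|\tilde\Prm^T_{\sigma_0}-\tilde\pi_F\|_\TV+\|\tilde\Prm^T_{\sigma_0'}-\tilde\pi_F\|_\TV.$$
Hence $\delta_F^{(N)}(T)\leq 2\max_{\sigma_0}\|\tilde\Prm^T_{\sigma_0}-\tilde\pi_F\|_\TV$.

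Next I use monotonicity. The map $T\mapsto\max_{\sigma_0}\|\tilde\Prm^T_{\sigma_0}-\tilde\pi_F\|_\TV$ is non-increasing; this is the standard contraction of total variation under a Markov semigroup. Therefore, for $T\geq\tilde t_\mix^{(N)}(c/2,F)$ this distance is at most $c/2$, and so $\delta_F^{(N)}(T)\leq c$. Since $|V(F)|\leq\log^2 a$, the definitions of $h^{(N)}_{c/2}$ and \eqref{eq:def_h} give
$$\tilde t_\mix^{(N)}(c/2,F)~\leq~ h^{(N)}_{c/2}(\log^2 a)~\leq~ h_{c/2}(\log^2 a).$$
So the bound holds for every $T\geq h_{c/2}(\log^2 a)$, uniformly over $N$ and $F$. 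Taking the maximum over $F$ and the supremum over $N$ then yields $\delta(a,T)\leq c$.

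There is no real obstacle here; a few details need checking.

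\emph{Range of $c$.} The mixing time $t_\mix(\kappa)$ was defined only for $\kappa\in(0,1/2)$, and $c/2\in(0,1/2)$ holds exactly because $c\in(0,1)$. This is why the claim is stated for $c\in(0,1)$.

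\emph{Finiteness.} The hypothesis $h_\kappa(a)<\infty$ of Theorem~\ref{thm} guarantees that $h_{c/2}(\log^2 a)$ is finite, so the threshold on $T$ is meaningful.

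\emph{Reaching the bound at the threshold.} The infimum defining the mixing time may be attained only in the limit. Continuity of $T\mapsto\tilde\Prm^T_{\sigma_0}$ for a finite-state continuous-time chain ensures that the bound $c/2$ holds at $T=\tilde t_\mix^{(N)}(c/2,F)$ itself, and then for all larger $T$ by monotonicity.
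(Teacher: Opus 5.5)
Your proof is correct and follows the paper's argument. You bound $\delta_F^{(N)}(T)$ by $2\max_{\sigma_0}\|\tilde\Prm^T_{\sigma_0}-\tilde\pi_F\|_\TV$ via the triangle inequality, then apply the definition of mixing time and take suprema over $F$ and $N$ to reach $h_{c/2}(\log^2 a)$; your explicit remarks on monotonicity in $T$ and on attainment at the threshold fill in steps the paper leaves implicit.
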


\begin{proof}
    Recall first that given $(\sigma_t)_{t\geq 0},(\sigma_t')_{t\geq 0}$ defined on $F$, the probability $\PP_{\opt,F}^T(\sigma_T\neq\sigma_T')$ equals the total variation between the laws of $\sigma_T$ and $\sigma_T'.$ Noting that for a chain with stationary measure $\pi$,
    $$\max_{x,y}\|\Prm_x^t-\Prm_y^t\|_\TV ~\leq~ 2\max_x\|\Prm_x^t-\pi\|_\TV,$$
    it follows from the definition of mixing time that
    $$\delta_F^{(N)}(T)~<~c, \quad \forall T\geq \tilde t_\mix^{(N)}(c/2,F).$$
    Thus, the claim follows from the definition of $h_{c/2}(\cdot).$
\end{proof}

\begin{proof}[Proof of Claim \ref{almost_martingale_bound}]
    It is straightforward that
    $$Y_k ~=~ \1_{\{\tau>k\}}\sum_{\C}\underbrace{\sum_{x\in V(\C)\cap\widetilde\Lambda_i}\alpha^{\dist(x,\Delta)}\1_{\{\sigma_{t_k}(x)\neq\sigma_{t_k}'(x)\}}}_{\Psi_k(\C)},$$
    where the first sum is over all $\eta_{t_{k}}$-clusters that intersect $\widetilde\Lambda_i$. For convenience, write $\C_{x,k}:=\C_{t_k}(x)$ and $\overline{\C_{x,k}}:=\C_k^\ajar(x)$. Recall that if at time $t_{k-1}$, $\sigma$ and $\sigma'$ agree on $\overline{\C_{x,k}}$, then identity coupling runs on it during $\I_k$ (noting that $\overline{\C_{x,k}}$ is a union of several $\eta_{t_{k-1}}$-clusters), so agreement is preserved and hence $\Psi_{x,k}:=\Psi_k(\overline{\C_{x,k}})=0$. Since each $\I_k$-ajar cluster can also be decomposed into a union of several $\eta_{t_k}$-clusters, we can replace the sum $\sum_{\C}\Psi_k(\C)$ with $\sum_{\overline\C}\Psi_k(\overline\C)$, where the last sum is over all $\I_k$-ajar clusters. Using that for disjoint $\C_1,\C_2$ we have $\Psi_k(\C_1\cup\C_2)=\Psi_k(\C_1)+\Psi_k(\C_2)$, it follows that
    \begin{align*}
        Y_k ~&\leq~ \1_{\{\tau>k\}}\sum_{\overline{\C}}\1_{\{\sigma_{t_{k-1}}(\overline{\C})\neq\sigma_{t_{k-1}}'(\overline{\C})\}}\Psi_k(\overline{\C}) \\
        &\leq~ \1_{\{\tau>k\}}\sum_{\substack{x\in\widetilde{\Lambda}_i:\\\sigma_{t_{k-1}}(x)\neq\sigma_{t_{k-1}}'(x)}}\Psi_{x,k} + \underbrace{\sum_{x\in\partial\widetilde{\Lambda}_i}\alpha^{\dist(x,\Delta)}}_{\widetilde{C}},
    \end{align*}
    where the first sum bounds from above the sum over all $\overline{\C}$ which are contained in $\widetilde{\Lambda}_i$, while in the second we take care of the possibility of clusters not fully contained in $\overline{\C}$ only having disagreement (at time $t_{k-1}$) outside $\widetilde{\Lambda}_i$. Clearly,
    $$\EE[Y_{k+1}|\F_k] ~\leq~ \widetilde{C} + \sum_{\substack{x\in\widetilde{\Lambda}_i:\\\sigma_{t_k}(x)\neq\sigma_{t_k}'(x)}}\EE[\1_{\{\tau>k+1\}}\Psi_{x,k+1}|\F_k],$$
    so it remains to bound the conditional expectation inside the sum. To this end, define the following two families of random variables:
    \begin{itemize}
        \item Let $U_{x,k}$ be the indicator random variable for the event that there is an edge in $E^+(\C_{x,k-1})$ which updates during $\I_k$. Note that $U_{x,k}=0$ implies that the optimal coupling is ran on $\C_{x,k-1}$, unless there was already agreement between $\sigma$ and $\sigma'$ on this cluster; moreover, it tells us that $\C_{x,k-1}=\overline{\C_{x,k}}=\C_{x,k}$.
        \item If $U_{x,k}=0$, we can consider the probability that the optimal coupling succeeds: we let $S_{x,k}$ be a coin flip associated with this event, where $S_{x,k}=1$ corresponds to coupling succeeding.
    \end{itemize}
    We can rewrite the conditional expectation above as
    \begin{align*}
        \EE[\1_{\{\tau>k+1\}}\Psi_{x,k+1}|\F_k] ~&=~ \EE[\1_{\{\tau>k+1\}}\Psi_{x,k+1}U_{x,k+1}|\F_k] \\
        &\quad+ \EE[\1_{\{\tau>k+1\}}\Psi_{x,k+1}(1-U_{x,k+1})S_{x,k+1}|\F_k] \\
        &\quad+ \EE[\1_{\{\tau>k+1\}}\Psi_{x,k+1}(1-U_{x,k+1})(1-S_{x,k+1})|\F_k].
    \end{align*}
    We first treat the second term, which is trivial, since $(1-U_{x,k+1})S_{x,k+1}$ is $1$ if there are no updates to edges in $E^+(\C_{x,k})$ during $\I_{k+1}$ and that the optimal coupling succeeds, so it follows immediately that
    $$\Psi_{x,k+1}(1-U_{x,k+1})S_{x,k+1} ~=~ 0,$$
    so the whole term equals $0$. 

    Next, we consider the third term. Since $(1-U_{x,k+1})(1-S_{x,k+1})$ is the indicator on the event that there are no updates to edges in $E^+(\overline{\C_{x,k}})$ during $\I_{k+1}$ and that the optimal coupling fails, we get that 
    \begin{align*}
        \Psi_{x,k+1}(1-U_{x,k+1})(1-S_{x,k+1}) ~&\leq~ (1-U_{x,k+1})(1-S_{x,k+1})\sum_{y\in\C_{x,k}}\alpha^{\dist(x,\Delta)}.
    \end{align*}
    Moreover, since on $\{\tau>k+1\}$, all clusters are of size at most $\log^2 L$ during $\I_k$, we have that
    $$\1_{\{\tau>k+1\}}\sum_{y\in\C_{x,k}}\alpha^{\dist(y,\Delta)} ~\leq~ \alpha^{\dist(x,\Delta)-\log^2 L}\log^2 L,$$
    which yields that the third term is bounded from above by
    $$(\alpha^{\dist(x,\Delta)-\log^2 L}\log^2 L)\EE[\1_{\{\tau>k+1\}}(1-U_{x,k+1})(1-S_{x,k+1})|\F_k].$$
    Letting $\F_{k}^\eta$ denote the $\sigma$-algebra generated by $\{(E,T,U)\in\upd_{i,0}(\eta):T\leq t_k\}$, we can use the tower property to obtain
    \begin{align*}
        &\EE[\1_{\{\tau>k+1\}}(1-U_{x,k+1})(1-S_{x,k+1})|\F_k] \\
        &=~ \EE[\1_{\{\tau>k+1\}}(1-U_{x,k+1})\EE[(1-S_{x,k+1})|\F_k\vee\F_{k+1}^\eta]|\F_k],
    \end{align*}
    where $\F_k\vee\F_{k+1}^\eta$ denotes the $\sigma$-algebra generated by $\F_k\cup\F_{k+1}^\eta$. Writing, as in Claim \ref{cl:delta_to_0}, $\delta:=\delta(L,\varepsilon/\lambda)$ for the upper bound on the probability that the optimal coupling fails, we get that the expressions in the equation above are bounded from above by
    $$\delta\EE[\1_{\{\tau>k+1\}}(1-U_{x,k+1})|\F_k] ~\leq~ \delta\EE[\1_{\{\tau>k\}}|\F_k] ~\leq~ \1_{\{\tau>k\}}\delta,$$
    where we use that $\{\tau>k+1\}\subset\{\tau>k\}\in\F_k$. To sum up, the third term is bounded from above by
    $$\1_{\{\tau>k\}}\delta\alpha^{\dist(x,\Delta)-\log^2 L}\log^2 L.$$
    Last, we consider the first term. Since $U_{x,k+1}$ is $1$ if there was an update to some edge in $E^+(\C_{x,k})$ during $\I_{k+1}$, we simply do
    \begin{align*}
        \1_{\{\tau>k+1\}}\Psi_{x,k+1}U_{x,k+1} ~&\leq~ \1_{\{\tau>k+1\}}U_{x,k+1}\sum_{y\in\overline{\C_{x,k}}}\alpha^{\dist(y,\Delta)} \\
        &\leq~ \1_{\{\tau>k+1\}}U_{x,k+1}\alpha^{\dist(x,\Delta)-\log^2 L}\log^2 L.
    \end{align*}
    Noting that a cluster of size at most $\log^2 L$ has less than $2d\log^2 L$ adjacent edges, we see that the probability of an edge update occurring (i.e., $U_{x,k+1}=1$) is at most $1-\exp(-\varepsilon2d\log^2 L)<\varepsilon2d\log^2 L=2d\log^{-3}L$,
    which yields that 
    $$\EE[\1_{\{\tau>k+1\}}U_{x,k+1}|\F_k] ~\leq~ 2d\log^{-3}\!L\,\EE[\1_{\{\tau>k+1\}}|\F_k] ~\leq~ \1_{\{\tau>k\}}2d\log^{-3}L,$$
    so the whole first term is bounded from above by
    $$\1_{\{\tau>k\}}2d\log^{-3}L\alpha^{\dist(x,\Delta)-\log^2 L}\log^2 L.$$
    Summing everything up, we get that
    $$\EE[\1_{\{\tau>k+1\}}\Psi_{x,k+1}|\F_k] ~\leq~ \1_{\{\tau>k\}}\alpha^{\dist(x,\Delta)-\log^2 L}\log^2 L(\delta+2d\log^{-3}L),$$
    and hence it follows that
    \begin{align*}
    \EE[Y_{k+1}|\F_k] ~&\leq~ \widetilde{C}+\1_{\{\tau>k\}}\alpha^{-\log^2 L}\log^2 L(\delta+2d\log^{-3}L)\sum_{\substack{x\in\widetilde{\Lambda}_i:\\\sigma_{t_k}(x)\neq\sigma_{t_k}'(x)}}\alpha^{\dist(x,\Delta)} \\
    &\leq~ 2dL^{d-1}\alpha^{\dist(\partial\widetilde{\Lambda}_i,\Delta)} + \alpha^{-\log^2 L}\log^2 L(\delta+2d\log^{-3}L)Y_k.
    \end{align*}
     In order to obtain \eqref{eq:claim_bound}, it remains to show that for $\lambda$ small enough, $\delta=\delta(L,\varepsilon/\lambda)\leq d\log^{-3}L$.
    Following Claim \ref{cl:delta_to_0}, it is sufficient to show that there exists $\lambda_*$, such that for $\lambda\leq\lambda_*$,
    $$\frac{\log^{-5}L}{\lambda} ~\geq~ h_{\frac{d}{2}\log^{-3}L}(\log^2 L).$$
    To this end, we note that we can always pick some increasing function $\psi$, so that $h_{(d/2)\log^{-3}L}(\log^2 L)\log^{5}L\leq\psi(L)$. Under the assumptions of \eqref{eq:L_assump}, it is possible to take $L=L_\lambda$ to be a sufficiently slowly growing function of $1/\lambda$, so that $\psi(L)\leq1/\lambda$ for small enough $\lambda$.
\end{proof}


\subsection{Sufficient condition for agreement}\label{subsec:sufficient_perc}

As explained above, the event that $\B_{i,n}$ is \textit{good} gives us several important bits of information, the first being that at $t=(n+1)C/\lambda$, $\sigma_t$ and $\sigma_t'$ agree on the entirety of $\overline{\Lambda_i^\core}\supset\Lambda_i^\core$. Following this, one could simply wait for the first $n$, such that $\B_{i,n}$ is good for all $i\in\T_M^d$. This however, regardless of $\Q(\B_{i,n}~\text{is good})$ being arbitrarily high, takes too long to happen. 

The other important information is obtained from Condition (\ref{A3}). In particular, if the preceding box, $\B_{i,n-1}$, is also good, then we obtain that $\sigma_t$ and $\sigma_t'$ agree on the entirety of $\Lambda_i^\core$ for all $t\in\II_n^{(2)}=[nC/\lambda,(n+1)C/\lambda)$. We will exploit this to obtain a sufficient condition for agreement of processes at times of form $nC/\lambda$.

In what follows we will write, for a given $n\geq 1$, we will write $J^{(n)}=\{j\in\T_M^d:\B_{j,n}~\text{is good}\}$ and denote by $A_1^{(n)},\ldots,A_{\ell_n}^{(n)}$ the connected components of bad boxes in the $\ell^\infty$-distance, i.e., the coarsest partition of $\T_M^d\setminus J^{(n)}$ such that for any $i\in A_k^{(n)}$ and $j\in A_{k'}^{(n)}$ with $k\neq k'$ we have $\|i-j\|_\infty>1$. Moreover, for $k\in\{1,\ldots,\ell_n\}$, we will write $J_k^{(n)}$ for all $j\in J^{(n)}$ for which there exists $i\in A_k^{(n)}$ such that $\|i-j\|_\infty=1$.

\begin{lemma}\label{lemma:protection}
Fix $n\geq 2$ and and let $k\in\{1,\ldots,\ell_n\}$ be such that
$$\sigma_\frac{nC}{\lambda}(x) ~=~ \sigma_\frac{nC}{\lambda}'(x), \quad \forall x\in\bigcup_{j\in A_k^{(n)}\cup J_k^{(n)}}\overline{\Lambda_j^\core}.$$
Then, for all $t\in\II_n^{(2)}$ and $x\in\bigcup_{j\in A_k^{(n)}}\Lambda_j^\core$,
\begin{equation}\label{eq:agr}
    \sigma_t(x) ~=~ \sigma_t'(x).
    \end{equation}
\end{lemma}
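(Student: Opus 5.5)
The plan is to propagate agreement forward in time one $\varepsilon/\lambda$-interval at a time, using the good boxes of $J_k^{(n)}$ as a protective shell around the bad cluster $A_k^{(n)}$. Write $R:=\bigcup_{j\in A_k^{(n)}}\Lambda_j^\core$ and $W:=\bigcup_{j\in J_k^{(n)}}\Lambda_j^\core$. Since $C/\varepsilon\in\N$, the interval $\II_n^{(2)}$ is an exact union of consecutive intervals $\I_m$ with $m=m_0+1,\ldots,m_0+C/\varepsilon$, where $t_{m_0}=nC/\lambda$. I will show by induction on $m$ that $\sigma_{t_{m-1}}$ and $\sigma_{t_{m-1}}'$ agree on $R\cup W$, and that this agreement on $R$ persists throughout $\I_m$.

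\textbf{Step 1 (the shell stays coupled).} For each $j\in J_k^{(n)}$ the box $\B_{j,n}$ is good. By hypothesis, $\sigma_{nC/\lambda}$ and $\sigma_{nC/\lambda}'$ agree on $\overline{\Lambda_j^\core}$. The actual coupled processes restricted to $\II_n^{(2)}$ fit the framework of Condition (\ref{A3}): take $\tilde\sigma=\sigma_{nC/\lambda}$, $\hat\sigma=\sigma'_{nC/\lambda}$, $\tilde\eta=\eta_{nC/\lambda}$, with all updates outside $\Lambda_j$ absorbed into $\Xi,\Xi'$ and $\Upsilon$. One point must be checked here: under $\Q$, the update collection driving $\sigma'$ inside $\Lambda_j$ is the one denoted $\upd_{j,n}^{(2)}(\sigma')$, i.e.\ the coupled updates. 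Hence (\ref{A3}) gives $\sigma_t=\sigma_t'$ on $\Lambda_j^\core$ for all $t\in\II_n^{(2)}$, and so on all of $W$ throughout $\II_n^{(2)}$.

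\textbf{Step 2 (geometry: ajar clusters from $R$ cannot cross $W$).} The cores $\Lambda_j^\core$ partition $\T_N^d$. Every $\ell^\infty$-neighbour of $A_k^{(n)}$ that is not in $A_k^{(n)}$ is good, hence lies in $J_k^{(n)}$. Therefore any nearest-neighbour path from a site of $R$ to a site outside $R\cup W$ must cross some core $\Lambda_j^\core$, $j\in J_k^{(n)}$, from one face to the opposite side. Such a crossing uses at least $L'$ sites of $\Lambda_j$; the corner case is where the path passes near a corner shared by several cores, and I expect to handle it by bounding the crossing length from below by $L'$ minus a constant.

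By Condition (\ref{A1}) for the good box $\B_{j,n}$, every $\I_m$-ajar cluster with $\I_m\subset\II_n^{(2)}$ that contains a point of $\Lambda_j$ has size at most $\log^2 L<L'$ for $L$ large. Here I need that the ajar clusters of the true environment are among the $\tilde\C$'s in (\ref{A1}), taking $\tilde\eta$ to be the true configuration at time $(n-1)C/\lambda$ and $\Upsilon$ the true outside updates. So for $x\in R$, the cluster $\C_m^\ajar(x)$ cannot exit $R\cup W$. If it did, its portion inside the crossed core would be a connected set of more than $\log^2 L$ sites of $\Lambda_j$.

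\textbf{Step 3 (induction).} At $t_{m_0}$ there is agreement on $R\cup W$ by hypothesis, since cores are contained in their thickenings. Assume agreement on $R\cup W$ at $t_{m-1}$. For $x\in R$, Step 2 gives $V(\C_m^\ajar(x))\subseteq R\cup W$, so $\sigma$ and $\sigma'$ agree on $\C_m^\ajar(x)$ at $t_{m-1}$. By the observation following the definition of ajar clusters, every $\eta_{t_{m-1}}$-cluster inside it is then in case (a), identity coupling runs, and nothing outside interacts with it during $\I_m$. Hence agreement holds on $\C_m^\ajar(x)\ni x$ for all $t\in\I_m$, and in particular at $t_m$. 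Combining this with Step 1 for $W$ closes the induction and yields (\ref{eq:agr}).

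The main obstacle is Step 2: making precise that the connected sets of good cores really separate $R$ from the rest of the torus in the nearest-neighbour graph (including the corner cases), and justifying that (\ref{A1}) applies to the actual ajar clusters. The latter requires care because the environment at the start of $\II_n$ is arbitrary from the box's perspective, which is exactly why (\ref{A1}) quantifies over all $\tilde\eta$ and $\Upsilon$.
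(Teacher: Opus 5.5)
Your proof is correct, but it is organised differently from the paper's.

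\textbf{The paper's route.} It argues by contradiction. It takes the first time $\tau\in\II_n^{(2)}$ at which $\sigma$ and $\sigma'$ disagree somewhere in $R$, and lets $\I_{m+1}$ be the interval containing $\tau$. It asserts that, because there is agreement on $R$ at $t_m$, the identity coupling runs on all of $R$ during $\I_{m+1}$. It then uses the graphical representation: a fresh disagreement at $x^*\in R$ must be inherited from a nearest neighbour $y^*\in R\cup W$. This contradicts either the minimality of $\tau$ or the consequence of (A3) on the shell.

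\textbf{Your route.} You run a forward induction over the intervals $\I_m$. You treat each ajar cluster $\C_m^\ajar(x)$, $x\in R$, as an autonomous unit on which identity coupling runs from an agreeing start. You explicitly invoke (A1) for the good boxes of $J_k^{(n)}$ to confine these clusters to $R\cup W$.

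\textbf{What each buys.} Your route fills a real gap. The paper's claim that agreement on $R$ at $t_m$ puts all of $R$ under identity coupling is not justified as written. An $\eta_{t_m}$-cluster meeting $R$ could a priori reach a disagreement outside $R\cup W$ and fall into case (b1). There the optimal coupling could create a disagreement in $R$ with no disagreeing neighbour, which breaks the graphical-representation step. Ruling this out needs exactly your Step 2, which the paper uses only tacitly. Once that is granted, the paper's argument is shorter and purely local.

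\textbf{Your Step 2 obstacle.} It is easier than you expect, and your single-core formulation is slightly off. A path can split its passage through $W$ between two adjacent cores, with only about $L'/2$ sites in each, so no core need be crossed face to face. You do not need that anyway:
\begin{enumerate}
\item Every $\ell^\infty$-neighbour of $A_k^{(n)}$ outside it lies in $J_k^{(n)}$. So $R\cup W$ contains the graph-distance ball of radius $L'$ around every site of $R$.
\item Hence a connected set joining $R$ to the complement of $R\cup W$ must meet $W$ and has more than $L'$ sites.
\item (A1) bounds the whole ajar cluster by $\log^2 L<L'$ as soon as it touches a single $\Lambda_j$ with $j\in J_k^{(n)}$.
\end{enumerate}
This gives the contradiction directly.
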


\begin{proof}
    Consider first the case where $\ell_n=1$, i.e., $A_1^{(n)}=\T_M^d\setminus J^{(n)}$. In this case, $\sigma_\frac{nC}{\lambda}(x)=\sigma_\frac{nC}{\lambda}'(x)$ for all $x\in\T_N^d$, so it follows from the definition of the coupling that $\sigma_t=\sigma_t'$ for all $t\geq nC/\lambda$. We assume from now on that $\ell_n\geq 2$. Let now $k$ be such that $\sigma_\frac{nC}{\lambda}(x)=\sigma_\frac{nC}{\lambda}'(x)$ for all $x\in\bigcup_{j\in A_k^{(n)}\cup J_k^{(n)}}\overline{\Lambda_j^\core}$. By definition of $J$ and the assumption that $\sigma_\frac{nC}{\lambda}$ and $\sigma_\frac{nC}{\lambda}'$ agree on $\bigcup_{j\in J_k^{(n)}}\overline{\Lambda_j^\core}$, it follows that
    \begin{equation}\label{eq:conseq_of_J}
        \sigma_t(\Lambda_j^\core) ~=~ \sigma_t'(\Lambda_j^\core), \quad \forall t\in\II_n^{(2)},j\in J_k^{(n)}.
    \end{equation}
    For $i\in A_k^{(n)}$, we write $\tau_i=\inf\{t\in\II_n^{(2)}:\sigma_t(\Lambda_i^\core)\neq\sigma_t'(\Lambda_i^\core)\}$ and $\tau=\min_{i\in A_k^{(n)}}\tau_i$. We now argue by contradiction, assuming $\tau<\infty$. We let $m$ be such that $\tau\in\I_{m+1}$, so that
    $$\sigma_{t_m}(\Lambda_i^\core) ~=~ \sigma_{t_m}'(\Lambda_i^\core), \quad \forall i\in A_k^{(n)}.$$
    It follows that all sites in $\bigcup_{i\in A_k^{(n)}}\Lambda_i^\core$ belong to the region where identity coupling is run during $\I_{m+1}$. Writing $x^*$ for the unique site in $\bigcup_{i\in A_k^{(n)}}\Lambda_i^\core$ such that $\sigma_\tau(x^*)\neq\sigma_\tau'(x^*)$, it then follows by graphical representation that there must exist $y^*$ and $\varepsilon'>0$ such that $x^*y^*\in E(\T_N^d)$ and $\sigma_s(y^*)\neq\sigma_s'(y^*)$ for $s\in[\tau-\varepsilon',\tau]$. By definition of clusters $A_1^{(n)},\ldots,A_{\ell_n}^{(n)}$, such $y^*$ must belong to either $\bigcup_{i\in A_k^{(n)}}\Lambda_i^\core$ or $\bigcup_{j\in J_k^{(n)}}\Lambda_j^\core$. However, belonging to the former would contradict the definition of $\tau$ and beloning to the latter would contradict \eqref{eq:conseq_of_J}, so we conclude that $\tau=\infty$ and hence the result follows.
\end{proof}

We can state and prove a sufficient condition for agreement.

\begin{lemma}\label{lemma:sufficient_condition_paths}
    In order to reach 
    \begin{equation}
        \sigma_\frac{(n+1)C}{\lambda}(x) ~=~ \sigma_\frac{(n+1)C}{\lambda}'(x), \quad \forall x\in\T_N^d,
    \end{equation}
    for $n\geq 2$, it is sufficient that there exist no $\ell^\infty$-paths of bad boxes from $\T_M^d\times\{1\}$ to $\T_M^d\times\{n\}$. 
\end{lemma}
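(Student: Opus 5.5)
We argue by induction on the time index, with a Peierls-type reading of the hypothesis. For each $m\in\{1,\ldots,n\}$ we say that a box $(i,m)$ is \emph{reached} if there is an $\ell^\infty$-path of bad boxes in $\T_M^d\times\{1,\ldots,m\}$ from $\T_M^d\times\{1\}$ ending at $(i,m)$. The hypothesis says that no box at level $n$ is reached; in particular every bad box at level $n$ is unreached. We will prove the following statement by induction on $m\geq 2$:
\[
\sigma_{\frac{(m+1)C}{\lambda}}(x)=\sigma'_{\frac{(m+1)C}{\lambda}}(x)\quad\text{for all }x\in\overline{\Lambda_i^\core},\text{ whenever }(i,m)\text{ is good or is bad but unreached.}
\]

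The good case is immediate for every $m$. Condition (\ref{A2}) is required for arbitrary starting configurations and arbitrary outside updates $\Xi,\Xi',\Upsilon$. It therefore applies to the actual coupled processes restricted to $\II_m^{(2)}$. The one point to check is that, under $\Q$, the updates used by $\sigma'$ inside $\Lambda_i$ during $\II_m^{(2)}$ are either $\upd(\sigma')$ (identity coupling) or come from the optimal coupling on small clusters. We have to match this with how $\upd(\sigma')$ is defined in the graphical construction of Appendix \ref{app:coupling}, and we take that construction as given.

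For the bad case we use Lemma \ref{lemma:protection}. Let $(i,m)$ be bad and unreached, and let $A_k^{(m)}$ be its $\ell^\infty$-component of bad boxes at level $m$. To apply Lemma \ref{lemma:protection} at level $m$ we need agreement at time $mC/\lambda$ on $\overline{\Lambda_j^\core}$ for every $j\in A_k^{(m)}\cup J_k^{(m)}$. This time is the end of level $m-1$, so by induction it suffices that each box $(j,m-1)$ is good or bad-and-unreached. If some $(j,m-1)$ were bad and reached, then its reaching path, followed by the step to $(j,m)$ (which is $\ell^\infty$-adjacent, since $\|(j,m-1)-(j,m)\|_\infty=1$), followed by the path inside $A_k^{(m)}$ to $(i,m)$, would reach $(i,m)$. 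This is a contradiction. For $j\in J_k^{(m)}$ the box $(j,m)$ is good, so it is not bad; still, $(j,m-1)$ is $\ell^\infty$-adjacent to a box of $A_k^{(m)}$, because $\|j-i'\|_\infty=1$ for some $i'\in A_k^{(m)}$. Hence the same argument shows $(j,m-1)$ is not bad-reached. Lemma \ref{lemma:protection} then gives agreement on $\Lambda_i^\core$ throughout $\II_m^{(2)}$, and in particular at its right endpoint. The base case is handled the same way: at level $1$ every bad box is reached, and at level $2$ a bad unreached box must have all its level-$1$ neighbours good. Those good level-$1$ boxes supply agreement at time $2C/\lambda$ via (\ref{A2}), so the induction can start at $m=2$, which is consistent with the requirement $n\geq 2$.

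The main obstacle is a mismatch in the bad case. Lemma \ref{lemma:protection} only gives agreement on the cores $\Lambda_i^\core$, whereas the induction hypothesis, and the input needed for the next application of the lemma, asks for agreement on the thickened cores $\overline{\Lambda_i^\core}$. To close this gap we strengthen the induction hypothesis to agreement on the union of $\Lambda_i^\core$ over all $i$ at time $(m+1)C/\lambda$. We then use that $\overline{\Lambda_j^\core}$ is covered by the cores $\Lambda_{j'}^\core$ with $\|j-j'\|_\infty\leq1$, and that those neighbouring indices are themselves good or unreached by the same adjacency argument. With this, full-core agreement at level $m$ implies thickened-core agreement for every index, and the induction closes. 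At $m=n$ every index is good or unreached, so the union of the cores, which is all of $\T_N^d$, agrees at time $(n+1)C/\lambda$, as required.
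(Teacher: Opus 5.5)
Your proposal is correct. It is essentially the paper's argument run forward: the paper argues by contrapositive, tracing a disagreement at time $(n+1)C/\lambda$ backwards via (\ref{A2}) and Lemma \ref{lemma:protection} to build an $\ell^\infty$-path of bad boxes down to level $1$, and it relies on the same observation you use, namely that $(j,m-1)$ with $j\in J_k^{(m)}$ is $\ell^\infty$-adjacent to $A_k^{(m)}\times\{m\}$. Your explicit covering of $\overline{\Lambda_i^\core}$ by the cores $\Lambda_j^\core$ with $\|i-j\|_\infty\leq 1$ (good neighbours via (\ref{A2}), bad neighbours lying in the same component $A_k^{(m)}$) is exactly the detail the paper leaves implicit in ``repeat this argument inductively''.
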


\begin{proof}
    Let $x\in\T_N^d$ be such that $\sigma_\frac{(n+1)C}{\lambda}(x)\neq \sigma_\frac{(n+1)C}{\lambda}'(x)$. Letting $i\in\T_M^d$ be such that $x\in\Lambda_i^\core$, it follows that $\B_{i,n}$ is a bad box; let $k\in\{1,\ldots,\ell_n\}$ be such that $i\in A_k^{(n)}$. By Lemma \ref{lemma:protection}, there exists $j\in A_k^{(n)}\cup J_k^{(n)}$ such that
    $$\sigma_\frac{nC}{\lambda}(\overline{\Lambda_j^\core}) ~\neq~ \sigma_\frac{nC}{\lambda}'(\overline{\Lambda_j^\core}),$$
    so in particular, $\B_{j,n-1}$ is a bad box. By definition of $A_k^{(n)}$ and $J_k^{(n)}$, there exists $j^*\in A_k^{(n)}$ so that $\|(j,n-1)-(j^*,n)\|_\infty=1$, from which it follows that there exists a path of $\ell^\infty$-path of bad boxes from $(i,n)$ to $(j,n-1)$. We can repeat this argument inductively until we reach a bad box of form $\B_{j^{**},1}$, which yields an $\ell^\infty$-path of bad boxes from $\T_M^d\times\{1\}$ to $(i,n)$. Thus, if there is no $\ell^\infty$-path of bad boxes from $\T_M^d\times\{1\}$ to $\T_M^d\times\{n\}$, configurations $\sigma_{\frac{(n+1)C}{\lambda}}$ and $\sigma_\frac{(n+1)C}{\lambda}$ cannot disagree on any site in $\T_N^d$.
\end{proof}

Define now, for each $i\in\T_M^d$ and $n\in\N$, a random variable
$$X_{i,n} ~:=~ \1_{\{\B_{i,n}~\text{is good}\}},$$
and write $\Prm_{\!\mathrm{good}}$ for the law of $(X_{i,n}:i\in\T_M^d,n\in\N)$ under $\Q$. Since the event $\{\B_{i,n}~\text{is good}\}$ depends only on updates within $\B_{i,n}$, it is independent of all the boxes which it does not intersect. Since $\Q(X_{i,n}=1)$ can be made arbitrarily high, it follows from the result of Liggett, Schonmann and Stacey (Theorem 0.0 in \cite{Liggett_Schonmann_Stacey_1997}) that there exists a $\rho=\rho(\Q(X_{i,n}=1))$, which too can be made arbitrarily high, such that
$$\P_{\rho}^\site ~\preceq_\D~ \Prm_{\!\mathrm{good}},$$
where $\P_\rho^\site$ denotes a Bernoulli site percolation on $\T_M^d\times\N$ with density $\rho$. Since the event that there exist no $\ell^\infty$-paths of $0$-sites from $\T_M^d\times\{1\}$ to $\T_M^d\times\{n\}$ is increasing, it is sufficient for us to give a lower bound on its probability w.r.t.~$\P_\rho^\mathrm{site}$, provided it is high enough to yield the result.

\begin{lemma}\label{lemma:site_percolation_bound}
    Let $\P_p^\site$ be a Bernoulli site percolation measure on $\T_M^d\times\N$ with density $p\in(0,1)$, and let $n\in\N$. Then, the probability of existence of an open $\ell^\infty$-path from $\T_M^d\times\{1\}$ to $\T_M^d\times\{n\}$ is bounded from above by $M^d(3^{d+1}-2)^n p^n$.
\end{lemma}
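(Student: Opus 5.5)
The plan is a first-moment (path counting) argument. Suppose there is an open $\ell^\infty$-path from $\T_M^d\times\{1\}$ to $\T_M^d\times\{n\}$. We first reduce it to a \emph{self-avoiding} open $\ell^\infty$-path $(i_1,m_1),\ldots,(i_K,m_K)$ with $m_1=1$ and $m_K=n$; loop erasure preserves openness and the endpoints. By truncating at the first visit to level $n$ and starting from the last visit to level $1$ before that, we may also assume that only the first site lies at level $1$. Since consecutive sites differ by at most $1$ in the time coordinate, such a path has at least $n$ sites. Its initial segment of exactly $n$ sites, $(i_1,m_1),\ldots,(i_n,m_n)$, is therefore a self-avoiding open $\ell^\infty$-path of length $n$ starting at a site of $\T_M^d\times\{1\}$. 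It thus suffices to bound the probability that some such length-$n$ path exists.

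By a union bound over the starting site $(i_1,1)$, there are $M^d$ choices. From each site, the number of $\ell^\infty$-neighbours in $\T_M^d\times\N$ is at most $3^{d+1}-1$, namely the $3^d\cdot 3$ points of the surrounding cube minus the site itself (fewer at level $1$ or for small $M$). For the first step we may use the bound $3^{d+1}-1$. For subsequent steps, self-avoidance excludes returning to the immediately preceding site, leaving at most $3^{d+1}-2$ choices. This is the standard non-backtracking count, and it gives at most $M^d(3^{d+1}-1)(3^{d+1}-2)^{n-2}\le M^d(3^{d+1}-2)^{n}$ paths once one checks $(3^{d+1}-1)\le(3^{d+1}-2)^2$. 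Each fixed self-avoiding path of $n$ distinct sites is open with probability exactly $p^n$ under product measure. Summing over paths gives the claimed bound $M^d(3^{d+1}-2)^np^n$.

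The only point needing care is the bookkeeping of the path count, namely that the exponent of $3^{d+1}-2$ can be taken to be $n$ rather than $n-1$ while the first-step factor is absorbed. One could alternatively count a walk of $n-1$ steps, each with at most $3^{d+1}-1$ choices. The cleanest route I would try first is to bound every step, including the first, by $3^{d+1}-2$, using that from a level-$1$ site the neighbours at level $0$ do not exist, which removes $3^d\ge 1$ options. For later steps, backtracking is excluded by self-avoidance. This makes the estimate $(3^{d+1}-2)^{n-1}p^n\le(3^{d+1}-2)^np^n$ immediate, with no real obstacle beyond this counting.
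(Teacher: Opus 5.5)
Your proposal is correct and follows essentially the same route as the paper: a union bound over the $M^d$ starting sites at level $1$, at most $3^{d+1}-2$ choices per step (the paper likewise uses that a level-$1$ site has only $2\cdot 3^d-1\le 3^{d+1}-2$ neighbours, and implicitly excludes backtracking), and probability $p^n$ for each path. Your explicit loop-erasure step, which produces a path of $n$ distinct sites and so justifies the factor $p^n$, is a detail the paper leaves implicit.
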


\begin{proof}
     Clearly, the event in question is contained in
     $$\{\text{$\exists$ open $\ell^\infty$-path of length $n$ started from $\T_M^d\times\{1\}$}\},$$
     whose probability is bounded from above by
     $$\sum_{x\in\T_M^d}\P_p^\site(\text{$\exists$ open $\ell^\infty$-path of length $n$ started from $(x,1)$}).$$
     Since each $v\in\T_M^d\times\N$ has at most $3^{d+1}-1$ neighbours in $\ell^\infty$-norm, and in particular, for $v'\in\T_M^d\times\{1\}$, there is $2\cdot 3^d-1$ of them, it follows that 
     $$\P_p^\site(\text{$\exists$ open $\ell^\infty$-path of length $n$ started from $(x,1)$}) ~\leq~ (3^{d+1}-2)^n p^n.$$
     Summing over all $x\in\T_M^d$ yields the bound.
\end{proof}

 We are finally equipped to give the proof of the main lemma in this section.

 \begin{proof}[Proof of Proposition~\ref{lemma:main_lemma_section_3}]
         Throughout the proof we keep in mind that $M$ is of order $N$, i.e., there exists a constant $c=c_\lambda\in(0,1)$, such that $M=cN$. We also note that by Lemma \ref{lemma:P(box_good)} and Theorem 0.0 in \cite{Liggett_Schonmann_Stacey_1997}, there exist for each $\rho\in(0,1)$ some $\lambda_\rho>0$ and $N_\rho\in\N$, such that $\P_\rho^\site\preceq_\D\Prm_{\!\mathrm{good}}$ as long as $\lambda<\lambda_\rho$ and $N>N_\rho$; throughout the proof we assume that $\lambda$ and $N$ are such that $\rho$ can be taken such that $(3^{d+1}-2)(1-\rho)<1$. Combining Lemma \ref{lemma:sufficient_condition_paths} and Lemma \ref{lemma:site_percolation_bound}, we obtain that
     \begin{align*}
         \Q\big(\sigma_\frac{(n+1)C}{\lambda}=\sigma_\frac{(n+1)C}{\lambda}'\big) ~&\geq~ \Q\big(\text{$\nexists$ $\ell^\infty$-path of $0$-sites from $\T_M^d\times\{1\}$ to $\T_M^d\times\{n\}$}\big) \\
         &\geq~ \P_\rho^\site\big(\text{$\nexists$ $\ell^\infty$-path of $0$-sites from $\T_M^d\times\{1\}$ to $\T_M^d\times\{n\}$}\big) \\
         &=~ 1-\P_{1-\rho}^\site\big(\text{$\exists$ $\ell^\infty$-path of $1$-sites from $\T_M^d\times\{1\}$ to $\T_M^d\times\{n\}$}\big) \\
         &\geq~ 1-M^d(3^{d+1}-2)^n(1-\rho)^n,
     \end{align*}
     and hence, for $\kappa\in(0,1/2)$,
     \begin{align*}
     \inf\{t\geq 0:\Q(\sigma_t\neq\sigma_t')\leq\kappa/2\} ~&\leq~ \frac{C}{\lambda}\big(1+\inf\{n\in\N:M^d(3^{d+1}-2)^n(1-\rho)^n\leq\kappa/2\}\big) \\
     &\leq~ \frac{2C}{\lambda}\left\lceil\frac{d\log N-\log(\kappa/2)}{|\log((3^{d+1}-2)(1-\rho))|}\right\rceil.
     \end{align*}
     Moreover, for $N$ large enough, the right hand side is bounded from above by
     $$\frac{\hat C_\rho}{\lambda}\log N, \quad \hat{C}_\rho:=\frac{4Cd}{|\log((3^{d+1}-2)(1-\rho))|}.$$
     Noting that the function $z\mapsto|\log((3^{d+1}-2)(1-z)|$ increases to $\infty$ as $z\uparrow 1$, it follows that for each $C^\star>0$ there exists $\rho$ sufficiently close to $1$, so that $\hat C_\rho\leq  C^\star$. In particular, if $\lambda<\lambda_\rho$ and $N>N_\rho$, where $\lambda_\rho$ and $N_\rho$ are as in the beginning of the proof, it holds that indeed
     $$\inf\{t\geq 0:\Q(\sigma_t\neq\sigma_t')\leq\kappa/2\}\leq \frac{C^\star}{\lambda}\log N,$$
     completing the proof.
\end{proof}


\section{Lower bound on the mixing time}\label{sec:lower_bound}

In the previous two chapters, we carried out the proof that for sufficiently small $\lambda>0$, we have $t_\mix=\Theta(\lambda^{-1}\log N)$, by first demonstrating that
$$t_\mix^\env(\kappa) ~\leq~ t_\mix(\kappa) ~\leq~ t_\mix^\env(\kappa/2) + \inf\{t\geq 0:\max_{f,g}\PP_{\!f,g}(\sigma_t\neq\sigma_t')\leq\kappa/2\},$$
where $\PP_{\!f,g}$ is any coupling as in Lemma \ref{lemma:upper_bound}. Then, we showed that $t_\mix^\env(\kappa)=\Theta(\lambda^{-1}\log N)$ and that for an appropriate choice of coupling $\PP_{\!f,g}$ we have 
$$\inf\{t\geq 0:\max_{f,g}\PP_{\!f,g}(\sigma_t\neq\sigma_t')\leq\kappa/2\}~=~O(\lambda^{-1}\log N).$$ However, since we only provided an upper bound for the quantity above, the question remains whether one can do better asymptotically. This is a particularly important question to address, as proving that it is also $o(\lambda^{-1}\log N)$ would imply a cutoff at the same location as the one for the random environment. 

Heuristically, however, this does not seem likely to be true. For the classical case (i.e., without the random environment), it was proven in \cite{Hayes_Sinclair_2007} that any nonredundant\footnote{This condition requires that under the dynamics, at least two different spin values can appear on any site.} Glauber dynamics on a sequence of finite undirected graphs of bounded degree has mixing time of order $\Omega(\log N)$. However, the proof in \cite{Hayes_Sinclair_2007} is restricted to reversible Markov chains. 

Below, we provide an analogue result in our setting for a large class of interactions, corresponding to monotone measures, which includes for example the Ising model. The proof is heavily inspired by the one in \cite{Hayes_Sinclair_2007}. 

Consider some linear order on $S$. We say that a measure $\mu$ on $S^{\T_N^d}$ is \textit{monotone} if the following holds for $X\sim\mu$ and any $\xi,\zeta\in S^{\T_N^d\setminus\{x\}}$ with $\xi\leq\zeta$ and $\mu(\xi),\mu(\zeta)>0$:
$$\mu\big(X(x)\geq s\big|X(\T_N^d\setminus\{x\})=\xi\big) ~\leq~ \mu\big(X(x)\geq s\big|X(\T_N^d\setminus\{x\})=\zeta\big),$$
for any $s\in S$ and $x\in\T_N^d$. A very useful notion that exists in this setting is that of a monotone coupling. In the context of Glauber dynamics, that translates to an ability to couple two copies $(\sigma_t)_{t\geq 0}$ and $(\sigma_t')_{t\geq 0}$ of the process with starting configurations $\sigma_0\leq\sigma_0'$ in such a way that $\sigma_t\leq\sigma_t'$ for all $t\geq 0$. We will moreover assume that the invariant measures corresponding to $\Phi^{(N)}$-spin systems on dynamical percolation satisfy the following non-triviality condition: 
\begin{equation}\label{eq:nonrtiv_condition}
\begin{aligned}
    &\text{there exists a partition $(S^+,S^-)$ of $S$ with $s_+>s_-$ for any $s_+\in S^+,s_-\in S^-$,} \\ &\text{such that there exists $\chi\in(0,1/2)$, so that} \\
    &\quad\quad\quad\quad\quad\quad\quad\quad\chi ~\leq~ \pi(\sigma(x)\in S^-) ~\leq~ 1-\chi\quad, \quad\forall x\in\T_N^d, \\
    &\text{uniformly in $N$.} 
\end{aligned}
\end{equation}

In what follows, we write $t_\mix^\sta(\cdot)$ for the mixing time for the corresponding spin system on dynamical percolation started from $\eta_0\sim\P_p$. 

\begin{theorem}\label{thm:lower_bound}
    Let $(\Phi^{(N)})_{N\geq 1}$ be as in Theorem \ref{thm}, and assume that their corresponding Gibbs measures are monotone and non-trivial as in \eqref{eq:nonrtiv_condition}. Then, for any $\kappa\in(0,1/2)$, there exists $N_0$, so that for all $N\geq N_0$,
    $$t_\mix^\sta(\kappa) ~\geq~ \frac{\log N}{30\log(2d)},$$
    regardless of the choice of $p$ and $\lambda$.
\end{theorem}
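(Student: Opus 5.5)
We adapt the Hayes--Sinclair argument, replacing reversibility by a light-cone (finite speed of propagation) estimate that holds regardless of the environment.

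\emph{Step 1: the light cone.} Use the graphical representation of Section~\ref{subsec:coupling}. For a site $x$ and time $t$, a spin update at $x$ can only read spins of neighbours of $x$, since it only uses neighbours $y$ with $\eta_t(xy)=1$. Hence $\sigma_t(x)$ is a function of $\sigma_0$ restricted to the sites reachable from $x$ by a backward chain of site-clock rings. More precisely, it depends only on $\sigma_0(y)$ for those $y$ admitting a nearest-neighbour path $y=y_0,y_1,\dots,y_m=x$ together with successive ring times $s_1<\dots<s_m\le t$ of the clocks at $y_1,\dots,y_m$; it also depends on the environment and on the uniform marks. This uses no information about $\eta$, which is why the bound is uniform in $p$ and $\lambda$.

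The expected number of such paths of length $m$ is at most $(2d)^m t^m/m!$. So the probability that the backward cone of $x$ reaches distance $R$ is at most $\sum_{m\ge R}(2dt)^m/m!\le (2edt/R)^R$. Take $t=\frac{\log N}{30\log(2d)}$ and $R=N^{1/10}$, say. Then this probability is superpolynomially small, and the cones of sites at mutual distance greater than $2R$ are disjoint with high probability.

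\emph{Step 2: the test statistic.} Choose a set $X$ of $K\asymp (N/R)^d$ sites at pairwise distance more than $3R$. Start from $\sigma_0\equiv \max S$ (the all-top configuration) and $\eta_0\sim\P_p$. Let $F(\sigma)=\#\{x\in X:\sigma(x)\in S^+\}$.

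Under $\pi$, the nontriviality condition \eqref{eq:nonrtiv_condition} gives $\E_\pi F\le (1-\chi)K$. We also need concentration of $F$ under $\pi$. Use that $\pi$ is the limit of the dynamics from stationary environment, and apply the light-cone decomposition at a large time $T$ with the cone truncated to the last time window of length $t$. The values at the sites of $X$ are then functions of (a) the configuration at time $T-t$ in disjoint regions and (b) disjoint local noise. This is awkward, so a better route is to avoid concentration under $\pi$. Instead we show that $F(\sigma_t)$ is close to $K$ with high probability. Since $\E_\pi F/K\le 1-\chi$, Markov's inequality gives $\pi(F\ge (1-\chi/2)K)\le \frac{1-\chi}{1-\chi/2}<1$. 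This yields total variation distance at least $1-\frac{1-\chi}{1-\chi/2}-o(1)$, which is a constant but not necessarily exceeding an arbitrary $\kappa<1/2$. To reach any $\kappa$, we do need concentration under $\pi$. We obtain it by monotonicity together with the same cone argument run from the all-bottom and all-top starts, sandwiching $\pi$ at a large time. Each of these has approximately independent site values over $X$, so the empirical frequencies concentrate, and by the sandwich $\pi$-typical $F$ lies between them. The paper's Lemma~\ref{lemma:lower_bound}-style projection then transfers the resulting bound to a total variation lower bound.

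\emph{Step 3: the site stays up with probability bounded below.} The main obstacle is showing that, started from the all-top configuration, $\P(\sigma_t(x)\in S^+)\ge 1-\chi/4$ uniformly. We plan to use the fact that with probability $e^{-t}=N^{-1/(30\log 2d)}$ the clock at $x$ never rings; this alone gives a polynomially small but nonzero fraction of untouched sites. This is exactly the Hayes--Sinclair mechanism. Condition on the set $U$ of sites in $X$ whose clocks never ring. Its size is concentrated around $K e^{-t}\gg \sqrt{K}$, by independence. The remaining sites of $X$ have, by monotonicity, $\sigma_t(x)\succeq$ the value from the stationary sandwich, so their count of $S^+$ values is at least that of the stationary-like behaviour. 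By Step 1 these counts are sums of nearly independent indicators and fluctuate by $O(\sqrt{K\log N})$. Then $F(\sigma_t)$ exceeds its stationary typical value by about $\chi K e^{-t}$. Since $t$ is chosen with $e^{-t}=N^{-1/(30\log 2d)}$, this excess dominates $\sqrt{K}\asymp N^{d(1-1/10)/2}$ fluctuations relative to $K$. The result is that the distinguishing event $\{F\ge \text{median}_\pi F+\tfrac12\chi K e^{-t}\}$ has probability near $1$ under the chain and near $0$ under $\pi$. This gives $t^\sta_\mix(\kappa)\ge t$ for $N\ge N_0$. Checking that the constant $30$ makes the exponent bookkeeping close is routine.
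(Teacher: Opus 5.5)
\textbf{Verdict: genuine gap.} Your light-cone estimate in Step 1 and the use of unrung clocks are the right ingredients. The step that fails is the comparison with $\pi$: your argument needs $F$ to be concentrated under $\pi$ at scale $Ke^{-t}$, and nothing you wrote provides that.

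\textbf{Where the argument relies on concentration under $\pi$.} You compare $F(\sigma_t)$ with ``its stationary typical value''. You also assert that $\{F\ge \mathrm{median}_\pi F+\tfrac12\chi Ke^{-t}\}$ has probability near $0$ under $\pi$ and near $1$ under the chain. Neither is established, and the sandwich cannot supply it:
\begin{itemize}
\item At a time large enough for the all-top and all-bottom chains to be close to $\pi$, the backward cones are no longer confined to the $R$-balls. That time need not be polynomial in $N$, let alone $o(N^{1/10})$. So there is no approximate independence over $X$ there.
\item At time $t$, where the cones are local, the sandwich $F(\sigma^-_t)\le F(\sigma^\pi_t)\le F(\sigma^+_t)$ has width at least the number of unrung sites, about $Ke^{-t}$. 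That is exactly the scale you need to resolve, so it says nothing about where $\pi$ puts $F$.
\item In Step 3, the count for the stationary copy is not a sum of nearly independent indicators. The cone only localizes the dependence on $\sigma_0$, and $\sigma_0\sim\pi$ is itself correlated.
\end{itemize}

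\textbf{Why this cannot be patched by a better estimate.} Nothing in the hypotheses prevents $\pi$ from being close to a mixture of two phases, with $F/K$ bimodal. Symmetric Ising at low temperature with $p$ close to $1$ is monotone, satisfies \eqref{eq:nonrtiv_condition}, and is covered by the theorem for all $p,\lambda$. With the all-top start alone, you only know two things: $F$ under $\pi$ is stochastically dominated by $F(\sigma^+_t)$, and $\E F(\sigma^+_t)\ge \E_\pi F+\chi Ke^{-t}$. These do not rule out $\pi$ sitting essentially at $\E F(\sigma^+_t)$ apart from a small low tail. So one extreme start is not enough. Separately, the intermediate claim $\P(\sigma_t(x)\in S^+)\ge 1-\chi/4$ is false in general, for example at infinite temperature.

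\textbf{The missing idea: a case split on a deterministic threshold.} The split needs nothing about $\pi$ beyond $\pi(F\le m)+\pi(F>m)=1$. It uses the fact that $t_\mix^\sta$ maximizes over initial spins, so both extreme starts are available.
\begin{itemize}
\item The paper takes $m=\E[f^+(\tilde\sigma_T)]$, where $\tilde\sigma$ starts from $\pi(\cdot|\eta_0)$ and is updated only inside the balls around the centres.
\item Monotone coupling plus the unrung-clock event give
$\E f^+(\hat\sigma_T)+\pi[f^+]e^{-T}\le m\le \E f^+(\bar\sigma_T)-\pi[f^-]e^{-T}$, where $\hat\sigma$ and $\bar\sigma$ are the all-bottom and all-top versions of the same restricted dynamics.
\item These restricted dynamics have exactly independent ball evolutions, so Hoeffding gives concentration. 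The light cone then transfers it to $\sigma^\pm_T$.
\item If $\pi(f^+\le m)\ge 1/2$, the all-top start gives $f^+>m$ with probability $1-o(1)$. Otherwise $\pi(f^+>m)\ge 1/2$, and the all-bottom start gives $f^+\le m$ with probability $1-o(1)$. Either way the total variation distance is at least $1/2-o(1)>\kappa$.
\end{itemize}
With your Step 1 you could instead take $m$ to be the midpoint of $\E F(\sigma^+_t)$ and $\E F(\sigma^-_t)$. Under the monotone coupling of the two extreme starts these differ by at least $Ke^{-t}$. Each concentrates within $O(\sqrt{K\log N})=o(Ke^{-t})$, so the same case split applies.
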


\begin{proof}
    It is sufficient to demonstrate that there exists a constant $D\geq (30\log(2d))^{-1}$ and an event $A$ such that, writing $T:=D\log N$,
    $$|\Prm_\sigma^T(A)-\pi(A)| ~>~ \kappa,$$
    for some $\sigma\in S^{\T_N^d}$. In particular, it is sufficient to consider $A$ of form $\{f\geq\alpha\}$ (or $\{f\leq \alpha\}$) for some observable $f:S^{\T_N^d}\to[0,1]$ and some threshold $\alpha\in(0,1)$. 

    To construct the appropriate event, we first consider a radius $r:=2e^2dD\log N$ and a collection $\CC\subset\T_N^d$ of $N^d/(2d)^{2r}$ ``centres'' at distance at least $2r$ apart. Writing $s_{\max}$ and $s_{\min}$ for the maximal and the minimal element of $S$ we consider two copies $(\sigma_t^+,\eta_t^+)_{t\geq 0}$ and $(\sigma_t^-,\eta_t^-)_{t\geq 0}$ of the $\Phi^{(N)}$-spin system on dynamical percolation, started from $\sigma_0^+\equiv s_{\max}$, $\sigma_0^-\equiv s_{\mix}$ and $\eta_0^+,\eta_0^-\sim\P_p$, which we couple so that $\eta_0^+=\eta_0^-$, $\upd(\sigma^+)=\upd(\sigma^-)$ and $\upd(\eta^+)=\upd(\eta^-)$. Moreover, we define a process $(\tilde\sigma_t,\tilde\eta_t)_{t\geq 0}$ by 
    \begin{itemize}
        \item[(i)] setting $\tilde\eta_0=\eta_0^+$ and sampling $\tilde\sigma_0\sim\pi(\cdot|\eta_0^+)$, so that the starting configuration is stationary,
        \item[(ii)] evolving $(\tilde\eta_t)_{t\geq 0}$ using $\upd(\eta^+)$, so that $\tilde\eta_t=\eta_t^+=\eta_t^-$, for all $t\geq 0$, and
        \item[(iii)] evolving $(\tilde\sigma_t)_{t\geq 0}$ using 
        $$\upd(\tilde\sigma)~:=~\{(V,T,U)\in\upd(\sigma^+):\dist(V,\CC)<r\},$$
        so that outside $\cup\{B_{r-1}(x):x\in\CC\}$, where $B_{r-1}(x)$ is the $(r-1)$-ball around $x$, spins are not updated.
    \end{itemize}
    Finally, letting $(S^-,S^+)$ be a partition of $S$ satisfying the non-triviality condition given in \eqref{eq:nonrtiv_condition}, define an observable 
    $$f^+(\sigma) ~:=~ \frac{1}{|\CC|}\sum_{x\in\CC}\1_{\{\sigma(x)\in S^+\}}, \quad \sigma\in S^{\T_N^d},$$
    the proportion of centres at which $\sigma$ assumes a value in $S^+$. Moreover, we write $f^-:=1-f^+$ for the proportion of centres at which $\sigma$ assumes a value in $S^-$. 

    \noindent We now distinguish between two possible cases:
    \begin{itemize}
        \item[(A)] $\pi(f^+\leq\E[f^+(\tilde\sigma_T)])\geq 1/2$,
        \item[(B)] $\pi(f^+>\E[f^+(\tilde\sigma_T)])\geq1/2$. 
    \end{itemize}

    \noindent\textbf{Case A:} In this case, it is sufficient to prove that for some $\varepsilon=\varepsilon_N>0$,
    \begin{equation}\label{eq:suff_case_A}
    \P\big(f^+(\sigma_T^+)<\E[f^+(\tilde\sigma_T)]+\varepsilon\big) ~<~ \frac{1}{2}-\kappa,
    \end{equation}
    for $N$ sufficiently large. In particular, it is sufficient to show that the value on the left hand side approaches $0$ as $N\to\infty$. As a tool, we define a process $(\bar\sigma_t,\bar\eta_t)_{t\geq 0}$ by
    \begin{itemize}
        \item[(i)] setting $\bar\sigma_0\equiv s_{\max}$ and $\bar\eta_0=\eta_0^+$,
        \item[(ii)] evolving $(\bar\eta_t)_{t\geq 0}$ using $\upd(\eta^+)$, and
        \item[(iii)] evolving $(\bar\sigma_t)_{t\geq 0}$ using $\upd(\tilde\sigma)$.
    \end{itemize}
    Note that by definition, $(\tilde\sigma_t,\tilde\eta_t)_{t\geq 0}$ and $(\bar\sigma_t,\bar\eta_t)_{t\geq 0}$ are coupled so that for each $t\geq 0$, $\tilde\eta_t=\bar\eta_t$ and $\tilde\sigma_t\leq\bar\sigma_t$. In order to show \eqref{eq:suff_case_A}, we exploit that (i) restrictions of $(\sigma_t^+)_{t\geq 0}$ and $(\bar\sigma_t)_{t\geq 0}$ to $(r-1)$-balls around centres typically do not manage to reach a disagreement until time T, and (ii) due to independent evolutions on distinct balls, $f^+(\bar\sigma_T)$ concentrates around its mean. In particular, we use that for 
    \begin{equation}\label{eq:appendix_epsilon_def}
    \varepsilon~=~\frac{1}{4}\pi[f^-]e^{-T},
    \end{equation}
    the left hand side in \eqref{eq:suff_case_A} is bounded from above by
    $$\P\big(f^+(\sigma_T^+)\leq f^+(\bar\sigma_T)-\varepsilon\big) + \P\big(f^+(\bar\sigma_T)\leq\E[f^+(\tilde\sigma_T)]+2\varepsilon\big).$$
    In order to give an upper bound on the probability of $\{f^+(\sigma_T^+)\leq f^+(\bar\sigma_T)-\varepsilon\}$, we note that this event implies that there is at least $|\CC|\varepsilon$ centres at which $\sigma_T^+$ and $\bar\sigma_T$ disagree, so the Markov's inequality yields
    $$\P\big(f^+(\sigma_T^+)\leq f^+(\bar\sigma_T)-\varepsilon\big) ~\leq~ \frac{1}{|\CC|\varepsilon}\sum_{x\in\CC}\P(\sigma_T^+(x)\neq\bar\sigma_T(x)).$$
    Fixing $x\in\CC$, the initial agreement $\sigma_0^+=\bar\sigma_0$ implies that the event $\{\sigma_T^+(x)\neq\bar\sigma_T(x)\}$ is contained in the event that there exists an $\ell^1$-path $x_1,\ldots,x_m$ where $d(x_1,x)\geq r$ and $x_m=x$, as well as a sequence of times $t_1<\ldots<t_m$ such that the Poisson clock associated with site $x_i$ rings at time $t_i$. By Observation 3.2 in \cite{Hayes_Sinclair_2007} and the fact that $m\geq r$, the probability of such event is bounded from above by $(eT2d/r)^r$ and hence
    $$\P\big(f^+(\sigma_T^+)\leq f^+(\bar\sigma_T)-\varepsilon\big) ~<~ \frac{e^{-r}}{\varepsilon}.$$
    In order to obtain an upper bound on the probability of $\{f^+(\bar\sigma_T)\leq\E[f^+(\tilde\sigma_T)]-2\varepsilon\}$, it is sufficient for us to give an upper bound on $\E[f^+(\tilde\sigma_T)]$ of form $\E[f^+(\bar\sigma_T)]-\varepsilon'$ for some $\varepsilon'>2\varepsilon$, as this allows us to make use of Hoeffding's bound. Since $\tilde\sigma_t\leq\bar\sigma_t$ for all $t\geq 0$, the event $\{\bar\sigma_T(x)\in S^+\}$ can be written as
    $$\{\tilde\sigma_T(x)\in S^+\}\cup\{\tilde\sigma_T(x)\in S^-,\bar\sigma_T(x)\in S^+\},$$
    so in particular
    $$\E[f^+(\bar\sigma_T)] ~=~ \E[f^+(\tilde\sigma_T)] + \frac{1}{|\CC|}\sum_{x\in\CC}\P(\tilde\sigma_T(x)\in S^-,\bar\sigma_T(x)\in S^+).$$
    We also exploit that under this coupling, the event $\{\tilde\sigma_T(x)\in S^-,\bar\sigma_T(x)\in S^+\}$ contains the event that the clock associated with site $x$ has not rung until time $T$ and that $\tilde\sigma_0(x)\in S^-$. Since clock rings are independent of the starting configuration, this event has probability precisely $\P(\tilde\sigma_0(x)\in S^-)e^{-T}$ and hence
    $$\E[f^+(\bar\sigma_T)] ~\geq~ \E[f^+(\tilde\sigma_T)] + \pi[f^-]e^{-T} ~=~ \E[f^+(\tilde\sigma_T)]+4\varepsilon.$$
    In particular,
    \begin{align*}
     \P\big(f^+(\bar\sigma_T)\leq\E[f^+(\tilde\sigma_T)]+2\varepsilon\big) ~&\leq~ \P\big(f^+(\bar\sigma_T)\leq\E[f^+(\bar\sigma_T)]-2\varepsilon\big) \\
     &\leq~ \exp\!\Big(\!-\frac{|\CC|}{2}\pi[f^-]^2e^{-2T}\Big),
    \end{align*}
    applying Hoeffding's bound in the second inequality. To conclude the proof of this case, we first note that
    $$\frac{e^{-r}}{\varepsilon} ~=~ \frac{4}{\pi[f^-]}e^{D(1-2e^2d)\log N},$$
    which approaches $0$ as $N\to\infty$; moreover, by definition of $|\CC|$, we obtained
    \begin{align*}
        |\CC|\pi[f^-]^2e^{-2T} ~&=~ \pi[f^-]^2e^{d\log N-2D\log N(2e^2d\log(2d)-1)} \\
        &\geq~ \pi[f^-]^2e^{(1-4e^2D\log(2d))d\log N},
    \end{align*}
    where the right hand side goes to infinity by choosing $D<(4e^2\log(2d))^{-1}$. Note that here we are using that $\pi[f^-]\geq\chi$, uniformly in $N$, where $\chi>0$ is the constant given in \eqref{eq:nonrtiv_condition}. Overall, we obtained that by letting $\varepsilon$ be as in \eqref{eq:appendix_epsilon_def} and $T=D\log N$ with $D<(4e^2\log(2d))^{-1}$,
    $$\mathbb{P}\big(f^+(\sigma_T^+)<\mathbb{E}[f^+(\tilde\sigma_T)]+\varepsilon\big) ~<~ \frac{e^{-r}}{\varepsilon}+\exp\!\Big(\!-\frac{|\CC|}{2}\pi[f^-]^2e^{-2T}\Big),$$
    where the right hand side converges to $0$ as $N\to\infty$, so in particular \eqref{eq:suff_case_A} holds for all $N$ large enough, concluding the proof of this case. \\

    \noindent\textbf{Case B:} The proof in this case is largely analogue to the one in case A. In this case, it is sufficient to show that for some $\varepsilon=\varepsilon_N$, 
    \begin{equation}\label{eq:suff_case_B}
        \P\big(f^+(\sigma_T^-)>\E[f^+(\tilde\sigma_T)]-\varepsilon\big) ~\to~ 0,\quad \text{as}~N\to\infty.
    \end{equation}
    Similarly as before, we define a process $(\hat\sigma_t,\hat\eta_t)_{t\geq 0}$ by
    \begin{itemize}
        \item[(i)] setting $\hat\sigma_0\equiv s_{\min}$ and $\hat\eta_0=\eta_0^-$,
        \item[(ii)] evolving $(\hat\eta_t)_{t\geq 0}$ using $\upd(\eta^-)$, and
        \item[(iii)] evolving $(\hat\sigma_t)_{t\geq 0}$ using $\upd(\tilde\sigma)$. 
    \end{itemize}
    By definition, $(\tilde\sigma_t,\tilde\eta_t)_{t\geq 0}$ and $(\hat\sigma_t,\hat\eta_t)_{t\geq 0}$ are coupled so that for each $t\geq 0$, $\tilde\eta_t=\hat\eta_t$ and $\tilde\sigma_t\geq\hat\sigma_t$. We now use that for $\varepsilon=4^{-1}\pi[f^+]e^{-T}$, the left hand side in \eqref{eq:suff_case_B} is bounded from above by
    $$\P\big(f^+(\sigma_T^-)\geq f^+(\hat\sigma_T)+\varepsilon\big) + \P\big(f^+(\hat\sigma_T)\geq\E[f^+(\tilde\sigma_T)]-2\varepsilon\big).$$
    An identical disagreement percolation argument as above yields that
    $$\P\big(f^+(\sigma_T^-)\geq f^+(\hat\sigma_T)+\varepsilon\big) ~\leq~ \frac{e^{-r}}{\varepsilon}.$$
     We are now left with bounding appropriately the probability of $\{f^+(\hat\sigma_T)]\geq\E[f^+(\tilde\sigma_T)]-2\varepsilon\}$, which is done in a very similar matter as in case A. In this particular case, it is sufficient to give a lower bound on $\E[f^+(\tilde\sigma_T)]$ of form $\E[f^+(\hat\sigma_T)]+\varepsilon'$ for some $\varepsilon'>2\varepsilon$, which we do exploiting that $\hat\sigma_t\leq\tilde\sigma_t$ for all $t\geq 0$. The latter tells us that 
    $$\{\tilde\sigma_T(x)\in S^+\} ~=~ \{\hat\sigma_T(x)\in S^+\}\cup\{\hat\sigma_T(x)\in S^-,\tilde\sigma_T(x)\in S^+\},$$
    and hence 
    $$\E[f^+(\tilde\sigma_T)] ~=~ \E[f^+(\hat\sigma_T)] + \frac{1}{|\CC|}\sum_{x\in\CC}\P(\hat\sigma_T(x)\in S^-,\tilde\sigma_T(x)\in S^+).$$
    By definition of the coupling, the event $\{\hat\sigma_T(x)\in S^-,\tilde\sigma_T(x)\in S^+\}$ contains the event that the clock associated with site $x$ has not rung until time $T$ and that $\tilde\sigma_0(x)\in S^+$, which has the probability $\P(\tilde\sigma_0(x)\in S^+)e^{-T}$ and hence
    $$\E[f^+(\tilde\sigma_T)] ~\geq~ \E[f^+(\hat\sigma_T)] + \pi[f^+]e^{-T} ~=~ \E[f^+(\hat\sigma_T)]+4\varepsilon.$$
    In particular,
    \begin{align*}
        \P\big(f^+(\hat\sigma_T)\geq\E[f^+(\tilde\sigma_T)]-2\varepsilon\big) ~&\leq~ \P\big(f^+(\hat\sigma_T)\geq\E[f^+(\hat\sigma_T)]+2\varepsilon\big) \\
        &\leq~ \exp\!\Big(\!-\frac{1}{2}|\CC|\pi[f^+]^2e^{-2T}\Big).
    \end{align*}
    Using that, by assumption \eqref{eq:nonrtiv_condition}, $\pi[f^+]\geq\chi$ uniformly in $N$, we can recycle from above that $e^{-r}/\varepsilon\to 0$ and $|\CC|\pi[f^+]^2e^{-2T}\to\infty$ as $N\to\infty$ in order to obtain that \eqref{eq:suff_case_B} indeed holds, which concludes the proof.
\end{proof}


\nocite{*}
\bibliographystyle{plain}  
\bibliography{references}  


\appendix

\section{Validity of coupling $\Q_f$}\label{app:coupling}

Here we verify that $\Q_{f,g}$ in fact defines a coupling of $\Prm_f$ and $\Prm_g$, in the sense that for each $t\geq 0$, the marginals of the law of $(\sigma_t,\eta_t)$ and $(\sigma_t',\eta_t')$ under $\Q_{f,g}$ correspond to $\Prm_f^t$ and $\Prm_g^t$ respectively. 

It is immediate, due to the implementation of the identity coupling that both $(\eta_t)_{t\geq 0}$ and $(\eta_t')_{t\geq 0}$ have law $\Psf:=\sum_{\eta}\P_p(\eta)\Psf_\eta(\cdot)$. In particular, for any $t\geq 0$, the marginal law of $\eta_t$ (resp.~$\eta_t'$) under $\Q_{f,g}$ is $\P_p$. 

We now write $\I:=[0,\varepsilon/\lambda]$ and recall that for fixed $k\geq 0$, we defined $t_k=k\varepsilon/\lambda$. For $t,t'\geq 0$, we write $\F_{t,t'}$ for the $\sigma$-algebra generated by $(V,T,U)\in\upd(\sigma)\cup\upd(\sigma')$ such that $T\leq t$ and $(E,T,U)\in\upd(\eta)\cup\upd(\eta')$ such that $T\leq t'$, as well as the realization of the (random) initial configurations. Additionally, we write $\F_t:=\F_{t,t}$. It is sufficient to verify that for any given $\xi\in S^{\T_N^d}$, $k\geq 0$ and $s\in\I$,
\begin{align}
    \Q_{f,g}(\sigma_{t_k+s}=\xi|\F_{t_k}) ~&=~ \Prm_{\sigma_{t_k},\eta_{t_k}}^s(\xi), \label{eq:appendix_to_verify_1} \\
    \Q_{f,g}(\sigma_{t_k+s}'=\xi|\F_{t_k}) ~&=~ \Prm_{\sigma_{t_k}',\eta_{t_k}'}^s(\xi). \label{eq:appendix_to_verify_2}
\end{align}
By definition of the coupling, i.e., since $(\sigma_{t},\eta_t)_{t\geq 0}$ is evolved using $\upd(\sigma)$ and $\upd(\eta)$, it is immediate that \eqref{eq:appendix_to_verify_1} holds; it remains to verify \eqref{eq:appendix_to_verify_2}. Writing $\EE_{f,g}$ for the expectation under $\Q_{f,g}$, it follows from the tower property that 
$$\Q_{f,g}(\sigma_{t_k+s}'=\xi|\F_{t_k}) ~=~ \EE_{f,g}\big[\Q_{f,g}(\sigma_{t_k+s}'=\xi|\F_{t_k,t_{k+1}})\big|\F_{t_k}\big];$$
it is thus sufficient to verify that
$$\Q_{f,g}(\sigma_{t_k+s}'=\xi|\F_{t_k,t_{k+1}}) ~=~ \Prm_{\sigma_{t_k}',\eta_{t_k}'}^s\big(\xi\big|(\eta_t')_{t\in\I}\big).$$
Recalling the notion of ajar clusters from Section \ref{subsec:boxes}, we slightly abuse the notation to write $\C_{k+1}^\ajar$ for the collection of ajar clusters associated with the interval $\I_{k+1}=[t_k,t_{k+1})$. We note that $\C_{k+1}^\ajar$ forms a partition of $\T_N^d$ and recall that for any distinct $\C,\C'\in\C_{k+1}^\ajar$,
$$\big(\sigma_{t}'(\C):t\in\I_{k+1}\big)\quad\text{and}\quad\big(\sigma_{t}'(\C'):t\in\I_{k+1}\big)$$
are independent. Thus, since the collection $\C_{k+1}^\ajar$ is measurable with respect to $\F_{t_k,t_{k+1}}$, we obtain the following decomposition:
\begin{equation}\label{eq:ajar_decomp}
    \Q_{f,g}(\sigma_{t_k+s}'=\xi|\F_{t_k,t_{k+1}}) ~=~ \prod_{\C\in\C_{k+1}^\ajar}\Q_{f,g}\big(\sigma_{t_k+s}'(\C)=\xi(\C)\big|\F_{t_k,t_{k+1}}\big).
\end{equation}
Fixing some $\C\in\C_{k+1}^\ajar$, we distinguish between two scenarios:
\begin{itemize}
    \item[(i)] If $\sigma_{t_k}(\C)\neq\sigma_{t_k}'(\C)$ and no edge updates occurred to the edges in the $\C$, i.e., there exist no $(E,T,U)\in\upd(\eta)$ with $E\in E^+(\C)$ and $t_k\leq T<t_{k+1}$, then the restriction of $\Q_{f,g}$ to $\C$ and $\I_{k+1}$ corresponds to the $(\varepsilon/\lambda)$-optimal coupling on $\C$ (described in Section \ref{sec:prelim}), keeping in mind that the absence of edge updates implies that $\C=\C_{s}(x)$ for arbitrary $s\in\I_{k+1}$ and $x\in V(\C)$. In particular,
    \begin{align*}
    \Q_{f,g}\big(\sigma_{t_k+s}'(\C)=\xi(\C)\big|\F_{t_k,t_{k+1}}\big)  ~&=~ \Prm_{\sigma_{t_k}',\eta_{t_k}'}\big(\sigma_s'(\C)=\xi(\C)\big|(\eta_t')_{t\in\I}\big),
    \end{align*}
    noting that $(\sigma_{t_k}',\eta_{t_k}')$ is understood as a realization of random variables, measurable with respect to $\F_{t_k,t_{k+1}}$.
    \item[(ii)] In the opposite case, the restriction of $\Q_{f,g}$ to $\C$ and $\I_{k+1}$ corresponds to the identity coupling and hence
    \begin{align*}
    \Q_{f,g}\big(\sigma_{t_k+s}'(\C)=\xi(\C)\big|\F_{t_k,t_{k+1}}\big) ~&=~ \Prm_{\sigma_{t_k}',\eta_{t_k}'}\big(\sigma_s'(\C)=\xi(\C)\big|(\eta_t')_{t\in\I}\big).
    \end{align*}
\end{itemize}
It thus follows that
\begin{align*}
    \Q_{f,g}(\sigma_{t_k+s}'=\xi|\F_{t_k,t_{k+1}}) ~&=~ \prod_{\C\in\C_{k+1}^\ajar} \Prm_{\sigma_{t_k}',\eta_{t_k}'}\big(\sigma_s'(\C)=\xi(\C)\big|(\eta_t')_{t\in\I}\big) \\
    &=~  \Prm_{\sigma_{t_k}',\eta_{t_k}'}^s\big(\xi\big|(\eta_t')_{t\in\I}\big),
\end{align*}
concluding the proof.


\end{document}